\numberwithin{equation}{section}
\renewcommand{\footnote}{}
\numberwithin{equation}{section}
\newcommand{\R}{\mathbb{R}}
\newcommand{\N}{\mathbb{N}}
\newcommand{\Z}{\mathbb{Z}}
\newcommand{\E}{\mathbb{E}}
\renewcommand{\P}{\mathbb{P}}
\newcommand{\tr}{\operatorname{trace}}
\newcommand{\diff}{{\,\rm{d}}}
\newcommand{\F}{\mathcal{F}}
\newtheorem{theorem}{Theorem}[section]
\newtheorem{lemma}[theorem]{Lemma}
\newtheorem{assumption}[theorem]{Assumption}
\begin{document}
\title{Stochastic theta methods for random periodic solution of stochastic differential equations under non-globally Lipschitz conditions\footnotemark[1]}

\author{Ziheng Chen\footnotemark[2],
Liangmin Cao\footnotemark[3], Lin Chen\footnotemark[4]}

\date{}

\maketitle
\footnotetext{\footnotemark[1] This work was supported by National Natural Science Foundation of China (Nos. 12201552, 11961029 and 12031020), the Natural Science Foundation of Jiangxi Province (No. 2020BABL201007), Yunnan Fundamental Research Projects (No. 202301AU070010) and Innovation Team of School of Mathematics and Statistics of Yunnan University (No. ST20210104).
}

\footnotetext{\footnotemark[2] School of Mathematics and Statistics, Yunnan University, Kunming, Yunnan 650500, China. Email: czh@ynu.edu.cn}

\footnotetext{\footnotemark[3] School of Mathematics and Statistics, Yunnan University, Kunming, Yunnan 650500, China. Email: caoliangmin@itc.ynu.edu.cn}

\footnotetext{\footnotemark[4] School of Statistics and Data Science, Jiangxi University of Finance and Economics, Nanchang, Jiangxi 330013, China. Email: chenlin@jxufe.edu.cn. Corresponding author.}

\begin{abstract}
      {\rm
           This work focuses on the numerical approximations of random periodic solutions of stochastic differential equations (SDEs). Under non-globally Lipschitz conditions, we prove the existence and uniqueness of random periodic solutions for the considered equations and its numerical approximations generated by the stochastic theta (ST) methods with $\theta \in (1/2,1]$. It is shown that the random periodic solution of each ST method converges strongly in the mean square sense to that of SDEs for all stepsize. More precisely, the mean square convergence order is $1/2$ for SDEs with multiplicative noise and $1$ for SDEs with additive noise. Numerical results are finally reported to confirm these theoretical findings.
      } \\

      \textbf{AMS subject classification: }
      {\rm 37H99, 60H10, 60H35, 65C30}\\

      \textbf{Key Words: }{\rm Stochastic theta methods, Random periodic solution,  Multiplicative SDEs, mean square convergence order,
	non-globally Lipschitz condition}
\end{abstract}

\section{Introduction}\label{introduction}

As a very active ongoing research area, the study of numerical solutions of SDEs has achieved a large amount of mathematical results in the past decades; see, e.g., monographs \cite{kloeden1992numerical, milstein2021stochastic} for SDEs with the traditional global Lipschitz condition and \cite{higham2002strong,
	hutzenthaler2011strong, hutzenthaler2012strong, wang2013tamed, zhang2013fundamental, hutzenthaler2015numerical, mao2015truncated, mao2016convergence, beyn2016stochastic, andersson2017meansquare, hutzenthaler2020perturbation,
	feng2017numerical, rong2020numerical,
	wu2023backward, guo2023order} for SDEs beyond such a restrictive condition.
However, just a very limited number of literature \cite{feng2017numerical, rong2020numerical, wu2023backward, guo2023order} began to study the numerical approximation of random periodic solutions of SDEs since the concept of random periodic solutions of random dynamical systems has been properly defined in \cite{zhao2009random}.
The periodicity of random periodic solutions of SDEs means that there exists a random invariant curve with period $\tau > 0$, which will be mapped into itself with respect to the shifted noise.
In general, the random invariant curves of random periodic solutions are rarely available in explicit form. Therefore it is natural to take advantage of numerical methods to characterize the random phenomena with a periodic nature.

We consider the following semi-linear SDE with multiplicative noise
\begin{equation}\label{eq:SDE}
	\text{d} X_{t}^{t_0}
	=
	\big(-AX_{t}^{t_0} + f(t,X_{t}^{t_0})\big)\diff{t}
	+
	g(t,X^{t_0}_{t})\diff{W_{t}},
	\quad t > t_{0}
\end{equation}
where $X_{t_{0}}^{t_{0}} = \xi$ is a $\R^{d}$-valued random variable, $f \colon \R \times \R^{d} \to \R^{d}$, $g \colon \R \times \R^{d} \to \R^{d \times m}$ are continuous functions and $A$ is a positive definite $d \times d$ symmetric matrix. Besides, $\{W_{t}\}_{t \in \R}$ is a standard two-sided Wiener process in $\R^{m}$ on a probability space $(\Omega,\F,\P)$. Here the notation $X_{t}^{t_0}, t \geq t_{0}$ denotes the solution process starting from $t_{0}$. Under Assumption \ref{asm:L} specified later, \eqref{eq:SDE} admits a unique solution $\{X_{t}^{t_0}\}_{t \geq t_{0}}$ (we also rewrite it as $\{X_{t}^{t_0}(\xi)\}_{t \geq t_{0}}$ to emphasize its dependence on the initial value $\xi$), described by the variation of constant formula as follows
\begin{align}
	X_{t}^{t_{0}}(\xi)
	=
	e^{-A(t-t_{0})}\xi
	+
	\int_{t_{0}}^{t} e^{-A(t-s)} f(s,X_{s}^{t_{0}}) \diff{s}
	+
	\int_{t_{0}}^{t} e^{-A(t-s)} g(s,X_{s}^{t_{0}}) \diff{W_{s}},
	\quad t \geq t_{0}.
\end{align}
Then the map $u_{t_{0},t} \colon \Omega \times \R^{d} \to \R^{d}$ defined by $u_{t_{0},t}(\omega,\xi) = X_{t}^{t_{0}}(\omega,\xi)$ satisfies the semiflow property
$u_{r,t}(\omega,\xi) = u_{s,t}(\omega, u_{r,s}(\omega,\xi))$
and the periodic property (with period $\tau > 0$) {$u_{s+\tau,t+\tau}(\omega,\xi) = u_{s,t}(\Theta_{\tau}\omega,\xi)$} with $\Theta_{t}(\omega)(s) := W_{t+s}-W_{t}$ for all {$r \leq s \leq t, \omega \in \Omega$ and $\xi \in \R^{d}$}.
Theorem \ref{thm:ep} shows that the pull-back sequence $\{X_{t}^{-k\tau}(\xi)\}_{k \in \N}$ has a limit $X_{t}^{*}$ in $L^{2}(\Omega;\R^{d})$, where $X_{t}^{*}$ is the random periodic solution of \eqref{eq:SDE} and is given by
\begin{align}
	X_{t}^{*}
	=
	\int_{-\infty}^{t} e^{-A(t-s)} f(s,X_{s}^{*}) \diff{s}
	+
	\int_{-\infty}^{t} e^{-A(t-s)} g(s,X_{s}^{*}) \diff{W_{s}}.
\end{align}
Then the numerical approximation of the random periodic solution $X_{t}^{*}$ becomes a numerical approximation of an infinite time horizon problem.

Concerning the numerical approximation of random periodic solution, the seminal work \cite{feng2017numerical} establishes the standard mean square convergence orders of the Euler--Maruyama method and a modified Milstein method for dissipative SDEs with multiplicative noise under the global Lipschitz conditions.
Subsequently, \cite{rong2020numerical} generalizes the Euler--Maruyama method to the ST methods and derives the convergence order $1/4$.
Taking into account that most SDEs arising from applications possess super-linearly or sub-linearly growing coefficients, it is necessary to explore the numerical results for SDEs beyond the global Lipschitz conditions. For SDEs with additive noise under the one-sided Lipschitz condition, \cite{wu2023backward} proves that the random periodic solution of the backward Euler--Maruyama method converges strongly in the mean square sense to that of SDEs with order $1/2$, which has been lifted to order $1$ under a more relaxed condition in \cite{guo2023order}. However, to the best of our knowledge, there are not any results about the numerical approximation of random periodic solution for SDEs with multiplicative noise under non-globally Lipschitz conditions, which motivates us to make a contribution to this problem.

Noting that we are dealing with a numerical approximation of an infinite time horizon problem, we take a stepsize $\Delta \in (0,1)$ and define an equidistant partition $\mathcal{P}^{\Delta} := \{t_{j}:=j\Delta\}_{j \in \Z}$ for the real line $\R$. Throughout this work, we always let the parameter $\theta \in (1/2,1]$. In order to numerically approximate the solution process $\{X_{t}^{-k\tau}\}_{t \geq -k\tau}$ of \eqref{eq:SDE} starting at $-k\tau$ with $k \in \N$, the ST methods are given by
\begin{align}\label{eq:BEM}
	\notag
	\hat{X}_{-k\tau+t_{j+1}}^{-k\tau}
	=&~
	\hat{X}_{-k\tau+t_{j}}^{-k\tau}
	+
	\theta\Delta \big( -A \hat{X}_{-k\tau+t_{j+1}}^{-k\tau}
	+
	f(t_{j+1},\hat{X}_{-k\tau+t_{j+1}}^{-k\tau}) \big)
	\\&~
	+
	(1-\theta)\Delta \big(-A \hat{X}_{-k\tau+t_{j}}^{-k\tau}
	+
	f(t_{j},\hat{X}_{-k\tau+t_{j}}^{-k\tau}) \big)
	+
	g(t_{j},\hat{X}_{-k\tau+t_{j}}^{-k\tau})
	\Delta W_{-k\tau+t_{j}}
\end{align}
for all {$j \in \N$}, where $\hat{X}_{-k\tau}^{-k\tau} = \xi$ is the initial value and $\Delta W_{-k\tau+t_{j}} := W_{-k\tau+t_{j+1}} - W_{-k\tau+t_{j}}$ is the Brownian increment. Based on the uniform bounds for the second moment of the numerical approximation { $\{\hat{X}_{-k\tau+t_{j}}^{-k\tau}\}_{j \in \N}$}, we employ Lemma \ref{lem:numrob} to show that each ST method admits a unique random periodic solution $\hat{X}_{t}^{*}$. Without resorting to the discrete Gronwall inequality, a technical iterative argument is introduced in the error estimate to overcome the difficulties arising by the superlinear growth of coefficients and the unboundedness of the time intervals. With the help of the H\"{o}lder continuity of the exact solution, we derive a uniform bound for the error between $\hat{X}_{-k\tau+t_{j}}^{-k\tau}$ and $X_{-k\tau+t_{j}}^{-k\tau}$ for all $k,j \in \N$, and consequently establish the mean square convergence of random periodic solutions.
To be precise, the mean square convergence order of each ST method is $1/2$ for SDEs with multiplicative noise and $1$ for SDEs with additive noise, which, in particular, coincides with \cite[Theorem 4.6]{guo2023order} investigating ST method with $\theta = 1$ for SDEs with additive noise.

The remainder of this paper is organized as follows.
Section \ref{sec:assumption} contains some assumptions and the existence and uniqueness of random periodic solutions of SDEs \eqref{eq:SDE}.
In Section \ref{sec:solutionST}, we present the well-posedness and the existence of unique random periodic solutions of the ST methods. The mean square convergence for random periodic solutions of the ST methods is established in Section \ref{eq:convergence}. Finally, several numerical experiments are given in Section \ref{sec:numerical} to illustrate the theoretical results.

\section{Assumptions and random periodic solutions of SDEs}\label{sec:assumption}

We start with some notation that will be used throughout the paper. Let $\N := \{0,1,2,\cdots\}$ be the set of nonnegative integers and $\Z := \{0, \pm1, \pm2, \cdots\}$ the set of all integers.
Let $|\cdot|$ and $\langle \cdot,\cdot \rangle$ denote the Euclidean norm and the corresponding inner product of vector in $\R^{d}$. By $A^{\top}$ we denote the transpose of a vector or a matrix $A$. For any matrix $A$, we use $|A| := \sqrt{\tr{(A^{\top}A)}}$ to denote its trace norm. Let $\{W_{t}\}_{t \in \R}$ be a standard two-sided Wiener process in $\R^{m}$ on a probability space $(\Omega,\F,\P)$, i.e., $W_{t} := W_{t}^{1}, t \geq 0$ and $W_{t} := W_{-t}^{2}, t < 0$ for two independent Brownian motions $\{W_{t}^{1}\}_{t \geq 0}$ and $\{W_{t}^{2}\}_{t \geq 0}$. The filtration $\{\F_{t}\}_{t \in \R}$ is defined by $\F_{t}^{s} := \sigma\{W_{u}-W_{v} : s \leq v \leq u \leq t\}$ for $-\infty < s \leq t < \infty$ and $\F_{t} := \F_{t}^{-\infty} = \bigvee_{s \leq t} \F_{t}^{s}$ for all $t \in \R$. Denote the standard $\P$-preserving ergodic Wiener shift by $\Theta \colon \R \times \Omega \to \Omega$, $\Theta_{t}(\omega)(s) := W_{t+s}-W_{t}, t,s \in \R$. For any $p \geq 1$, we denote by $(L^{p}(\Omega;\R^{d}),|\cdot|_{L^{p}(\Omega;\R^{d})})$ the Banach space of all $\R^{d}$-valued random variables $\eta$ with $|\eta|_{L^{p}(\Omega;\R^{d})} := \big(\E[|\eta|^{p}]\big)^{\frac{1}{p}} < \infty$. For simplicity, the letter $C$ stands for a generic positive constant whose value may vary for each appearance, but independent of the time interval and the stepsize $\Delta$ of the considered numerical method. At last, we refer, e.g., \cite{feng2011pathwise, zhao2009random, feng2016anticipating, feng2017numerical} for more details on the definition of the random periodic solution for stochastic semiflows.

In order to provide efficient numerical approximation for SDEs \eqref{eq:SDE}, we put the following conditions on  \eqref{eq:SDE}.

\begin{assumption}\label{asm:L}
	Suppose the following conditions are satisfied.
	
	\noindent {\rm{(A1)}}
	The eigenvalues $\{\lambda_i\}_{i=1,2,\cdots,d}$ of the symmetric matrix $A$ satisfy $\lambda := \min\{ \lambda_i\}_{i=1,2,\cdots,d} > 0.$
	
	\noindent {\rm{(A2)}}
	The coefficients $f$ and $g$ are continuous and periodic in time with period $\tau > 0$, i.e.,
	\begin{equation}\label{asm:fg-tau}
		f(t+\tau,x)=f(t,x), \quad g(t+\tau,x)=g(t,x),
		\quad x\in\mathbb{R}^{d},~ t\in\mathbb{R}.
	\end{equation}
	
	\noindent {\rm{(A3)}}
	There exist constants $\gamma \geq 1$, {$p^{*} > (d+4)\gamma$,} $L_{f} \in (0,\lambda)$ and $C > 0$ such that for all $x,y \in \R^d$ and $t, s \in \R$,
	{\begin{equation}\label{asm:fg-fg}
			\langle x-y,f(t,x)-f(s,y) \rangle
			+
			(p^{*}-1)|g(t,x)-g(s,y)|^{2}
			\leq
			L_{f}|x-y|^2 + C(1+|x|+|y|)^{\gamma+1}|t-s|
	\end{equation}}
	and
	\begin{equation}\label{eq:fpolynomial}
		|f(t,x)-f(s,y)|
		\leq
		C\big((1+|x|+|y|)^{\gamma-1} |x-y|
		+(1+|x|+|y|)^{\gamma} |t-s| \big).
	\end{equation}
	
	\noindent {\rm{(A4)}}
	For any { $0< p \leq p^*$, } there exists a constant $C > 0$ such that $|\xi|_{L^{p}(\Omega;\R^{d})} \leq C$.

	\noindent {\rm{(A5)}}
    Let $\Pi(s,x)$ be the explosion time of the solution $X_{t}^{s}(x)$ to \eqref{eq:SDE} with initial value $x\in\R^d$ and satisfy
    $\P(\Pi(s,x)=\infty, s \in \R, x \in \R^{d}) = 1$.

\end{assumption}

	For later convenience, we note several consequences of Assumption \ref{asm:L}. \rm{(A1)} implies
	\begin{align}\label{eq:Ainequality}
		\big\langle -Ax,x \big\rangle
		\leq
		-\lambda|x|^{2},
		\quad x \in \R^{d}.
	\end{align}
	From \eqref{asm:fg-fg}, we obtain
	\begin{equation}\label{eq:fgmonotonicity}
		\langle x-y,f(t,x)-f(t,y) \rangle
		+
		(p^{*}-1)|g(t,x)-g(t,y)|^{2}
		\leq
		L_{f}|x-y|^2, \quad x,y \in\mathbb{R}^d, t \in \R,
	\end{equation}
	which together with the weighted Young inequality shows that for any $x \in \R^{d}, t \in \R$,
	\begin{align}\label{eq:monocondition}
		&~\langle x,f(t,x) \rangle + \frac{p^{*}-1}{2}|g(t,x)|^{2}
		\notag
		\\\leq&~
		\langle x,f(t,x)-f(t,0) \rangle
		+
		(p^{*}-1)|g(t,x)-g(t,0)|^{2}
		+
		\langle x,f(t,0) \rangle
		+
		(p^{*}-1)|g(t,0)|^{2}
		\notag
		\\\leq&~
		L_{f}|x|^{2} + \frac{\lambda-L_{f}}{2}|x|^{2}
		+
		\frac{1}{2(\lambda-L_{f})}|f(t,0)|^{2}
		+
		(p^{*}-1)|g(t,0)|^{2}
		\\\leq&~
		\widetilde{L_{f}}|x|^{2} + \widetilde{C}
		\notag
	\end{align}
	with
	\begin{align*}
		\widetilde{L_{f}}
		:=
		\frac{\lambda+L_{f}}{2} \in \big(L_{f},\lambda\big),
		\quad
		\widetilde{C}
		:=
		\frac{1}{2(\lambda-L_{f})}
		\bigg(\sup_{t \in [0,\tau)}|f(t,0)|^{2}\bigg)
		+
		(p^{*}-1)\bigg(\sup_{t \in [0,\tau)}|g(t,0)|^{2}\bigg) > 0.
	\end{align*}
	Besides, combining \eqref{asm:fg-fg} and \eqref{eq:fpolynomial} shows that there exists a constant $C > 0$ such that
	\begin{align}\label{eq:gpolynomial}
		|g(t,x)-g(s,y)|^{2}
		\leq
		C(1+|x|+|y|)^{\gamma-1}|x-y|^2
		+
		C(1+|x|+|y|)^{\gamma+1}|t-s|
	\end{align}
	for all $x,y \in \R^d$ and $t, s \in \R$. Together with \eqref{eq:fpolynomial}, we derive
	\begin{align}\label{eq:fgsuperlinear}
		|f(t,x)| \leq C(1+|x|)^{\gamma},
		\quad
		|g(t,x)|^{2} \leq C(1+|x|)^{\gamma+1},
		\quad x \in \R^{d}, t \in \R.
	\end{align}

According to \eqref{eq:fpolynomial} and \eqref{eq:gpolynomial}, we know that $f$ and $g$ are locally Lipschitz continuous on $[t_{0},T] \times \R^{d}$, where $[t_{0},T]$ is a finite subinterval of $[t_{0},\infty)$ for any $T > t_{0}$. In combination with the monotone condition \eqref{eq:monocondition}, \cite[Theorem 3.5 in Chapter 2]{mao2008stochastic} ensures that \eqref{eq:SDE} admits a unique strong solution $\{X_{t}^{t_{0}}\}_{t \in [t_{0},T]}$, given by
	\begin{align}
		X_{t}^{t_{0}}
		=
		\xi + \int_{t_{0}}^{t} -AX_{s}^{t_{0}}
		+ f(s,X_{s}^{t_{0}}) \diff{s}
		+ \int_{t_{0}}^{t} g(s,X_{s}^{t_{0}}) \diff{W_{s}},
		\quad t \in [t_{0},T].
	\end{align}
      Due to the arbitrariness of $[t_{0},T] \subset [t_{0},\infty)$, \eqref{eq:SDE} possesses a unique global solution $\{X_{t}^{t_{0}}\}_{t \geq t_{0}}$.
%
%
\begin{equation}\label{eq:ustmap}
      u_{s,t} \colon \Omega \times \R^{d} \to \R^{d}, (\omega,x) \mapsto u_{s,t}(\omega,x) := X_{t}^{s}(\omega,x)
\end{equation}
satisfies the semiflow property
\begin{equation}\label{eq:semiflow}
      u_{s,t}(x)
      =
      u_{r,t}(u_{s,r}(x)),
      \quad
      s \leq r \leq t,
      ~x \in \R^{d},
      ~\text{almost surely}.
\end{equation}


To establish the existence and uniqueness of random periodic solution of \eqref{eq:SDE}, several auxiliary lemmas are stated and proved below. The first one considers the uniform boundedness for the exact solution of \eqref{eq:SDE}.

	\begin{lemma}\label{lem:exactbound}
		Suppose that Assumption \ref{asm:L} holds. Then for any $p \in [2,p^{*}]$, there exists a constant $C > 0$ such that
		\begin{align}\label{eq:exactbound}
			\sup_{t_0 \in \mathbb{R}} \sup_{t \geq t_0}
			\E\big[|X_t^{t_0}|^{p}\big]
			\leq
			C.
		\end{align}
	\end{lemma}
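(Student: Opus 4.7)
The plan is to apply It\^o's formula to $|X_{t}^{t_{0}}|^{p}$ and exploit the dissipative structure encoded in \eqref{eq:Ainequality} together with the monotone condition \eqref{eq:monocondition} to obtain a differential inequality of the form $\tfrac{d}{dt}\E|X_{t}^{t_{0}}|^{p}\leq -c_{1}\E|X_{t}^{t_{0}}|^{p}+c_{2}$ with $c_{1}>0$; an ODE comparison then yields the uniform bound. Since the coefficients are only locally Lipschitz and $x\mapsto|x|^{p}$ is not $C^{2}$ at the origin for general $p\in[2,p^{*}]$, I would first localize with the stopping time $\rho_{R}:=\inf\{t\geq t_{0}:|X_{t}^{t_{0}}|\geq R\}$ and apply It\^o's formula to the smoothed function $x\mapsto(|x|^{2}+\epsilon)^{p/2}$ on $[t_{0},t\wedge\rho_{R}]$, sending $\epsilon\downarrow 0$ and $R\uparrow\infty$ at the end via the non-explosion hypothesis (A5) and Fatou's lemma.

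The drift produced by It\^o's formula consists of three terms: $p|X_{s}|^{p-2}\langle X_{s},-AX_{s}+f(s,X_{s})\rangle$, $\tfrac{p}{2}|X_{s}|^{p-2}|g(s,X_{s})|^{2}$, and $\tfrac{p(p-2)}{2}|X_{s}|^{p-4}|g(s,X_{s})^{\top}X_{s}|^{2}$. The bound $|g^{\top}x|^{2}\leq|x|^{2}|g|^{2}$ consolidates the latter two into $\tfrac{p(p-1)}{2}|X_{s}|^{p-2}|g(s,X_{s})|^{2}$, so the whole drift is dominated by
\[
 p|X_{s}|^{p-2}\Big[\langle X_{s},-AX_{s}+f(s,X_{s})\rangle+\tfrac{p-1}{2}|g(s,X_{s})|^{2}\Big].
\]
Since $p\leq p^{*}$ one has $\tfrac{p-1}{2}|g|^{2}\leq\tfrac{p^{*}-1}{2}|g|^{2}$, so \eqref{eq:Ainequality} and \eqref{eq:monocondition} combine to show that the bracket above is at most $-(\lambda-\widetilde{L_{f}})|X_{s}|^{2}+\widetilde{C}$. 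Since $\lambda-\widetilde{L_{f}}>0$ by construction, a further application of Young's inequality to absorb $p\widetilde{C}|X_{s}|^{p-2}$ into half of $p(\lambda-\widetilde{L_{f}})|X_{s}|^{p}$ plus a constant produces the desired differential inequality.

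After taking expectation along the localized process and applying Gronwall, one obtains an estimate of the form $\E|X_{t\wedge\rho_{R}}^{t_{0}}|^{p}\leq e^{-c_{1}(t-t_{0})}\E|\xi|^{p}+c_{2}/c_{1}$ that is uniform in $t\geq t_{0}$ and, by (A4), in $t_{0}\in\R$. The main point to watch is that the crucial constant $\lambda-\widetilde{L_{f}}$ stays positive for every $p\in[2,p^{*}]$ once all three It\^o terms are combined; this is precisely what the coupling constant $p^{*}-1$ in \eqref{asm:fg-fg} is calibrated for. The double limit $\epsilon\downarrow 0$ and $R\uparrow\infty$ is routine given (A5), so the only real obstacle is book-keeping the constants carefully through the Young-inequality step to make sure the exponential decay survives.
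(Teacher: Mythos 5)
Your proposal is correct and follows essentially the same route as the paper: It\^o's formula combined with \eqref{eq:Ainequality} and \eqref{eq:monocondition}, the Cauchy--Schwarz consolidation of the two second-order It\^o terms into $\tfrac{p(p-1)}{2}|X_s|^{p-2}|g|^2$, a Young-inequality absorption of the $|X_s|^{p-2}$ term, and a Gronwall-type argument, where your differential-inequality/ODE-comparison formulation is just the unweighted version of the paper's use of the weighted quantity $e^{p\lambda t}|X_t^{t_0}|^p$. One minor remark: the smoothing $(|x|^2+\epsilon)^{p/2}$ is unnecessary since $|x|^p$ is in fact $C^2$ for all $p\geq 2$ (its Hessian extends continuously by $0$ at the origin), though it is harmless, and the stopping-time localization you add is a sound precaution that the paper leaves implicit.
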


	\begin{proof}
	Applying the It\^{o} formula shows that for any $t_0 \in \mathbb{R}$, $t \geq t_0$ and $p \in (2,p^{*}]$,
	\begin{align}
		e^{p\lambda t}|X_t^{t_0}|^{p}
		=&~
		e^{p \lambda t_0}|\xi|^{p}
		+
		\int_{t_0}^{t}
		p \lambda e^{p\lambda s} |X_{s}^{t_0}|^{p}
		+
		p e^{p\lambda s} |X_{s}^{t_0}|^{p-2}
		\big\langle X_{s}^{t_0}, -AX_{s}^{t_0}\big\rangle
		\notag
		\\&~
		+ p e^{p\lambda s} |X_{s}^{t_0}|^{p-2}
		\big\langle X_{s}^{t_0}, f(s,X_{s}^{t_0})\big\rangle
		+
		\frac{p}{2} e^{p\lambda s} |X_{s}^{t_0}|^{p-2}
		|g(s,X_{s}^{t_0})|^{2}
		\notag
		\\&~
		+ \frac{p(p-2)}{2} e^{p\lambda s}
		|X_{s}^{t_0}|^{p-4}
		|(X_{s}^{t_0})^{\top}g(s,X_{s}^{t_0})|^{2}
		\diff{s}
		\notag
		\\&~
		+
		\int_{t_0}^{t} p e^{p\lambda s}
		|X_{s}^{t_0}|^{p-2}
		\big\langle X_{s}^{t_0},
		g(s,X_{s}^{t_0}) \diff{W_{s}}\big\rangle.
		\notag
	\end{align}
	It follows from the Cauchy--Schwarz inequality, \eqref{eq:Ainequality} and \eqref{eq:monocondition} that
	\begin{align}
		&~\E\big[e^{p\lambda t}|X_t^{t_0}|^{p}\big]
		\notag
		\\\leq&~
		\E\big[e^{p\lambda t_0}|\xi|^{p}\big]
		+
		\int_{t_0}^{t} pe^{p\lambda s}
		\E\big[|X_{s}^{t_0}|^{p-2}
		\big(\lambda |X_{s}^{t_0}|^{2}
		+
		\big\langle X_{s}^{t_0},
		-AX_{s}^{t_0}\big\rangle \big)\big] \diff{s}
		\notag
		\\&~
		+
		\int_{t_0}^{t} p e^{p\lambda s}
		\E\Big[|X_{s}^{t_0}|^{p-2}
		\Big(\big\langle X_{s}^{t_0}, f(s,X_{s}^{t_0})\big\rangle
		+
		\frac{p-1}{2}
		|g(s,X_{s}^{t_0})|^{2}\Big)\Big]
		\diff{s}
		\notag
		\\\leq&~
		\E\big[e^{p\lambda t_0} |\xi|^{p}\big]
		+
		\int_{t_0}^{t}
		p e^{p\lambda s} \big(\widetilde{L_{f}}
		\E\big[|X_{s}^{t_0}|^{p}\big]
		+
		\widetilde{C}
		\E\big[|X_{s}^{t_0}|^{p-2}\big]\big)
		\diff{s}.
		\notag
	\end{align}
	Due to the Young inequality $|x|^{p-2}|y| \leq \frac{p-2}{p}|x|^{p} + \frac{2}{p}|y|^{\frac{p}{2}}$ for any $x,y \in \R$, we choose a fixed constant $\varepsilon \in (0,\lambda-\widetilde{L_{f}})$ to derive
	\begin{align*}
		\widetilde{C}|X_{s}^{t_0}|^{p-2}
		=&~
		\frac{p}{p-2} \times
		\bigg( (\lambda-\widetilde{L_{f}}
		-\varepsilon)^{\frac{p-2}{p}}
		|X_{s}^{t_0}|^{p-2}
		\times
		\frac{p-2}{p}
		(\lambda-\widetilde{L_{f}}-\varepsilon)^{\frac{2-p}{p}}
		\widetilde{C}\bigg)
		\\\leq&~
		\frac{p}{p-2} \times
		\bigg( \frac{p-2}{p}
		(\lambda-\widetilde{L_{f}}-\varepsilon)
		|X_{s}^{t_0}|^{p}
		+
		\frac{2}{p}\Big(\frac{p-2}{p}
		(\lambda-\widetilde{L_{f}}-\varepsilon)^{\frac{2-p}{p}}
		\widetilde{C}\Big)^{\frac{p}{2}}\bigg)
		\\\leq&~
		(\lambda-\widetilde{L_{f}}-\varepsilon)
		|X_{s}^{t_0}|^{p}
		+
		\frac{2}{p-2}\big(
		(\lambda-\widetilde{L_{f}}-\varepsilon)^{\frac{2-p}{p}}
		\widetilde{C}\big)^{\frac{p}{2}},
	\end{align*}
	and consequently
	\begin{align}
		\E\big[e^{p\lambda t}|X_t^{t_0}|^{p}\big]
		\leq
		\E\big[ e^{p\lambda t_0} |\xi|^{p}\big]
		+
		Ce^{p\lambda t}
		+
		\int_{t_0}^{t} p(\lambda-\varepsilon)
		\E\big[e^{p\lambda s}
		|X_{s}^{t_0}|^{p}\big] \diff{s}.
		\notag
	\end{align}
	The Gronwall inequality \cite[Theorem 1]{dragomir2003gronwall} implies
	\begin{align}
		\E\big[e^{p\lambda t}|X_t^{t_0}|^{p}\big]
		\leq&~
		\E\big[e^{p\lambda t_0} |\xi|^{p}\big]
		+
		Ce^{p\lambda t}
		+
		\int_{t_0}^{t} p(\lambda-\varepsilon)
		\big(\E\big[ e^{p\lambda t_0} |\xi|^{p}\big]
		+
		Ce^{p\lambda s}\big)
		e^{\int_{s}^{t} p(\lambda-\varepsilon) \diff{u}}
		\diff{s}
		\notag
		\\\leq&~
		\E\big[ e^{p\lambda t_0} |\xi|^{p}\big]
		+
		Ce^{p\lambda t}
		+
		\E\big[ e^{p\lambda t_0} |\xi|^{p}\big]e^{p(\lambda-\varepsilon)(t-t_0)}
		+
		C\frac{\lambda-\varepsilon}{\varepsilon}e^{p\lambda t}.
		\notag
	\end{align}
	Then we obtain \eqref{eq:exactbound} for $p \in (2,p^{*}]$, which together with the H\"{o}lder inequality indicates \eqref{eq:exactbound} for $p =2$. Thus we finish the proof.
\end{proof}

	The forthcoming lemma explores the dependence of solutions of \eqref{eq:SDE} on different initial values.
	
	\begin{lemma}\label{lem:rob}
		Let $X_t^{t_0}(\xi)$ and $X_t^{t_0}(\eta)$ be two solutions of  \eqref{eq:SDE} with different initial values $\xi$ and $\eta$, respectively. Suppose that Assumption \ref{asm:L} holds for both initial values $\xi$ and $\eta$. Then for any $p \in [2,p^{*}], t_0 \in \R$ and $t \geq t_0$,
		\begin{equation}\label{lem:rob:rs}
			\E\big[|X_t^{t_0}(\xi)-X_t^{t_0}(\eta)|^{p}\big]
			\leq
			e^{p(L_{f}-\lambda)(t-t_0)}
			\E\big[|\xi-\eta|^{p}\big].
		\end{equation}
	\end{lemma}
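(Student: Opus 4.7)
Let $Z_{t}:=X_{t}^{t_{0}}(\xi)-X_{t}^{t_{0}}(\eta)$ for $t \geq t_{0}$. Since both processes satisfy \eqref{eq:SDE} with the same Brownian motion, $Z_{t}$ solves a linear-drift SDE with drift $-AZ_{t}+\Delta f(t) := -AZ_{t}+f(t,X_{t}^{t_{0}}(\xi))-f(t,X_{t}^{t_{0}}(\eta))$ and diffusion $\Delta g(t) := g(t,X_{t}^{t_{0}}(\xi))-g(t,X_{t}^{t_{0}}(\eta))$, with initial value $Z_{t_{0}}=\xi-\eta$. The plan is to apply It\^{o}'s formula to $|Z_{t}|^{p}$ and exploit the monotonicity \eqref{eq:fgmonotonicity} together with the dissipativity \eqref{eq:Ainequality}.

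To handle the mild singularity of $x\mapsto|x|^{p}$ at the origin when $p\in(2,p^{*}]$, and to make the stochastic integral a true martingale given only local Lipschitz and polynomial-growth assumptions on $f,g$, I would first work with a smoothed function $\varphi_{\varepsilon}(x):=(|x|^{2}+\varepsilon)^{p/2}$ and localize by stopping times $\sigma_{n}:=\inf\{t\geq t_{0}\colon |X_{t}^{t_{0}}(\xi)|+|X_{t}^{t_{0}}(\eta)|\geq n\}$. Applying It\^{o}'s formula to $\varphi_{\varepsilon}(Z_{t\wedge\sigma_{n}})$, taking expectations to kill the martingale part, then sending $\varepsilon\downarrow 0$ and $n\to\infty$ (using Lemma \ref{lem:exactbound} and dominated convergence), I obtain the clean identity
\begin{align*}
\E[|Z_{t}|^{p}]
=\E[|Z_{t_{0}}|^{p}]
+\int_{t_{0}}^{t}\E\!\Big[&p|Z_{s}|^{p-2}\langle Z_{s},-AZ_{s}+\Delta f(s)\rangle
+\tfrac{p}{2}|Z_{s}|^{p-2}|\Delta g(s)|^{2}\\
&+\tfrac{p(p-2)}{2}|Z_{s}|^{p-4}|Z_{s}^{\top}\Delta g(s)|^{2}\Big]\diff s.
\end{align*}

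Next, I would apply Cauchy--Schwarz $|Z_{s}^{\top}\Delta g(s)|^{2}\leq|Z_{s}|^{2}|\Delta g(s)|^{2}$ to the last term, collecting
\begin{equation*}
\tfrac{p}{2}|Z_{s}|^{p-2}|\Delta g(s)|^{2}+\tfrac{p(p-2)}{2}|Z_{s}|^{p-2}|\Delta g(s)|^{2}
=p|Z_{s}|^{p-2}\cdot\tfrac{p-1}{2}|\Delta g(s)|^{2}.
\end{equation*}
The bracket under the integrand thus becomes
$p|Z_{s}|^{p-2}\bigl[\langle Z_{s},-AZ_{s}\rangle+\langle Z_{s},\Delta f(s)\rangle+\tfrac{p-1}{2}|\Delta g(s)|^{2}\bigr]$.
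By \eqref{eq:Ainequality}, $\langle Z_{s},-AZ_{s}\rangle\leq -\lambda|Z_{s}|^{2}$. For the other two terms, I would invoke \eqref{eq:fgmonotonicity}: since $p\leq p^{*}$ and $p^{*}\geq 1$, one has $\tfrac{p-1}{2}\leq p^{*}-1$, and therefore
$\langle Z_{s},\Delta f(s)\rangle+\tfrac{p-1}{2}|\Delta g(s)|^{2}\leq\langle Z_{s},\Delta f(s)\rangle+(p^{*}-1)|\Delta g(s)|^{2}\leq L_{f}|Z_{s}|^{2}$.
Combining gives
\begin{equation*}
\E[|Z_{t}|^{p}]\leq\E[|Z_{t_{0}}|^{p}]+\int_{t_{0}}^{t}p(L_{f}-\lambda)\E[|Z_{s}|^{p}]\diff s,
\end{equation*}
and the standard Gronwall inequality yields \eqref{lem:rob:rs}. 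The case $p=2$ follows either as a direct specialization (with the $|Z|^{p-4}$ term vanishing since $p-2=0$) or by H\"{o}lder's inequality from any $p>2$.

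The main technical care is in step 1: justifying the passage to the limit $\varepsilon\downarrow 0$ in the singular factor $|Z|^{p-4}$ and ensuring the localized stochastic integral is a true martingale. The former is handled because the bound $|Z_{s}^{\top}\Delta g(s)|^{2}\leq|Z_{s}|^{2}|\Delta g(s)|^{2}$ already removes two powers of $|Z|$ in the denominator, so the integrand is uniformly controlled in $\varepsilon$; the latter uses the uniform moment estimate of Lemma \ref{lem:exactbound} together with the polynomial growth \eqref{eq:fgsuperlinear} on $g$. The remaining algebra---verifying $\tfrac{p-1}{2}\leq p^{*}-1$ and bundling the constants so that only $L_{f}-\lambda$ (which is strictly negative by (A3)) survives---is straightforward.
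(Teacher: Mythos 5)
Your overall strategy---It\^{o}'s formula applied to $|Z_t|^p$, Cauchy--Schwarz on the It\^{o} correction term, the dissipativity \eqref{eq:Ainequality}, the monotonicity \eqref{eq:fgmonotonicity} via $\tfrac{p-1}{2}\leq p^{*}-1$, then Gronwall---is essentially the paper's, and your smoothing/localization is a legitimate (indeed more careful) justification of an It\^{o} computation the paper performs only formally. The one genuine problem is the final step. You arrive at
\begin{equation*}
\E\big[|Z_{t}|^{p}\big]
\leq
\E\big[|Z_{t_{0}}|^{p}\big]
+\int_{t_{0}}^{t}p(L_{f}-\lambda)\,\E\big[|Z_{s}|^{p}\big]\diff{s}
\end{equation*}
with coefficient $p(L_{f}-\lambda)<0$, and invoke ``the standard Gronwall inequality.'' But the integral form of Gronwall's lemma is false for a negative constant coefficient: for instance, the constant function $u\equiv\epsilon$ on $[0,T]$ satisfies $u(t)\leq 1-\int_{0}^{t}u(s)\diff{s}$ whenever $\epsilon(1+T)\leq 1$, yet the would-be conclusion $u(T)\leq e^{-T}$ fails for any $\epsilon$ with $e^{-T}<\epsilon\leq 1/(1+T)$. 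So the displayed inequality does not, by itself, imply the decay estimate \eqref{lem:rob:rs}.

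The gap is repairable precisely because you established an identity (an equality) before weakening it: setting $v(t):=\E[|Z_{t}|^{p}]$, your identity shows $v$ is absolutely continuous with $v'(t)\leq p(L_{f}-\lambda)v(t)$ for a.e.\ $t$, hence $\frac{\diff}{\diff{t}}\big(v(t)e^{-p(L_{f}-\lambda)t}\big)\leq 0$ a.e.\ and \eqref{lem:rob:rs} follows; you must argue through this differential (or integrating-factor) form rather than cite the integral-form lemma. Equivalently---and this is what the paper does---apply It\^{o}'s formula to the weighted quantity $e^{p\lambda t}|Z_{t}|^{p}$ instead of $|Z_{t}|^{p}$: the weight absorbs the $-\lambda$ contributed by \eqref{eq:Ainequality}, the resulting Gronwall coefficient is $pL_{f}>0$, and the integral form of Gronwall then applies legitimately, after which dividing by $e^{p\lambda t}$ gives \eqref{lem:rob:rs}. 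In other words, the paper's exponential weight is not cosmetic; it is exactly the device that keeps the Gronwall coefficient nonnegative. With either repair your proof is complete.
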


	\begin{proof}
	Because of $X_{t_0}^{t_0}(\xi)-X_{t_0}^{t_0}(\eta) = \xi-\eta$ and
	\begin{align}
		\notag
		\diff{(X_{t}^{t_0}(\xi)-X_{t}^{t_0}(\eta))}
		=&~
		\big(-A(X_{t}^{t_0}(\xi)-X_{t}^{t_0}(\eta))
		+
		(f(t,X_{t}^{t_0}(\xi))-f(t,X_{t}^{t_0}(\eta)))
		\big) \diff{t}
		\\&~\notag
		+
		\big(g(t,X_{t}^{t_0}(\xi))
		-g(t,X_{t}^{t_0}(\eta))\big)\diff{W_{t}},
		\quad t > t_0,
	\end{align}
	we use the It\^{o} formula to get
	\begin{align}
		\notag
		&~e^{p\lambda t}|X_{t}^{t_0}(\xi)-X_{t}^{t_0}(\eta)|^{p}
		\\=&~\notag
		e^{p\lambda t_0} |\xi-\eta|^{p}
		+
		\int_{t_0}^{t}
		p\lambda e^{p\lambda s}
		|X_{s}^{t_0}(\xi)-X_{s}^{t_0}(\eta)|^{p}
		\\&~+\notag
		pe^{p\lambda s}
		|X_{s}^{t_0}(\xi)-X_{s}^{t_0}(\eta)|^{p-2}
		\big\langle X_{s}^{t_0}(\xi)-X_{s}^{t_0}(\eta),
		-A(X_{s}^{t_0}(\xi)-X_{s}^{t_0}(\eta)) \big\rangle
		\diff{s}
		\\&~+\notag
		\int_{t_0}^{t}
		p e^{p\lambda s}
		|X_{s}^{t_0}(\xi)-X_{s}^{t_0}(\eta)|^{p-2}
		\big\langle X_{s}^{t_0}(\xi)-X_{s}^{t_0}(\eta),
		f(s,X_{s}^{t_0}(\xi))-f(s,X_{s}^{t_0}(\eta)) \big\rangle
		\\&~+\notag
		\frac{p}{2}e^{p\lambda s}
		|X_{s}^{t_0}(\xi)-X_{s}^{t_0}(\eta)|^{p-2}
		|g(s,X_{s}^{t_0}(\xi))-g(s,X_{s}^{t_0}(\eta))|^{2}
		\\&~+\notag
		\frac{p(p-2)}{2} e^{p\lambda s}
		|X_{s}^{t_0}(\xi)-X_{s}^{t_0}(\eta)|^{p-4}
		|(X_{s}^{t_0}(\xi)-X_{s}^{t_0}(\eta))^{\top}
		(g(s,X_{s}^{t_0}(\xi))-g(s,X_{s}^{t_0}(\eta)))|^{2}
		\diff{s}
		\\&~+\notag
		\int_{t_0}^{t} pe^{p\lambda s}
		|X_{s}^{t_0}(\xi)-X_{s}^{t_0}(\eta)|^{p-2}
		\big\langle X_{s}^{t_0}(\xi)-X_{s}^{t_0}(\eta),
		(g(s,X_{s}^{t_0}(\xi))-g(s,X_{s}^{t_0}(\eta)))
		\diff{W_{s}} \big\rangle.
	\end{align}
	Taking expectations and utilizing the Cauchy--Schwarz inequality, \eqref{eq:Ainequality} as well as \eqref{eq:fgmonotonicity} show that for any $p \in [2,p^{*}]$, 
	\begin{align}
		\notag
		\E\big[e^{p\lambda t}
		|X_{t}^{t_0}(\xi)-X_{t}^{t_0}(\eta)|^{p}\big]
		\leq&~
		\E\big[e^{p\lambda t_0} |\xi-\eta|^{p}\big]
		+
		\int_{t_0}^{t} p
		\E\Big[e^{p\lambda s}
		|X_{s}^{t_0}(\xi)-X_{s}^{t_0}(\eta)|^{p-2}
		\\&~\times\notag
		\Big(\big\langle X_{s}^{t_0}(\xi)-X_{s}^{t_0}(\eta),
		f(s,X_{s}^{t_0}(\xi))-f(s,X_{s}^{t_0}(\eta)) \big\rangle
		\\&~+\notag
		\frac{p-1}{2} |g(s,X_{s}^{t_0}(\xi))
		-g(s,X_{s}^{t_0}(\eta))|^{2}\Big)\Big] \diff{s}
		\\\leq&~\notag
		\E\big[e^{p\lambda t_0} |\xi-\eta|^{p}\big]
		+
		\int_{t_0}^{t} p L_{f}
		\E\big[e^{p\lambda s}|X_{s}^{t_0}(\xi)
		- X_{s}^{t_0}(\eta)|^{p}\big]\diff{s}.
	\end{align}
	It follows from the Gronwall inequality
	\cite[Corollary 3]{dragomir2003gronwall} that
	\begin{align}
		\notag
		\E\big[e^{p\lambda t}
		|X_{t}^{t_0}(\xi)-X_{t}^{t_0}(\eta)|^{p}\big]
		\leq&~
		\E\big[e^{p\lambda t_0} |\xi-\eta|^{p}\big]
		e^{pL_{f}(t-t_0)},
	\end{align}
	which immediately results in \eqref{lem:rob:rs}.
\end{proof}

The forthcoming result presents the H\"{o}lder continuity of the exact solution with respect to the final time.

\begin{lemma}\label{lem:exactholder}
	Suppose that Assumption \ref{asm:L} holds. Then for any $p \in [2,\frac{p^{*}}{\gamma}]$, $t_0 \in \R$ and $s,t \geq t_0 $, there exists a constant $C > 0$, independent of $t_0, s, t$, such that
	\begin{align}\label{eq:exactholder}
		\E\big[|X_{t}^{t_0 } - X_{s}^{t_0 }|^{p}\big]
		\leq
		C\big(|t-s|^{\frac{p}{2}} + |t-s|^{p}\big).
	\end{align}
\end{lemma}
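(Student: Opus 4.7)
The plan is to start from the integral form of the SDE and estimate each piece separately by standard inequalities, relying on the a priori moment bound from Lemma \ref{lem:exactbound} together with the polynomial growth bound \eqref{eq:fgsuperlinear}. Without loss of generality assume $t_0 \leq s \leq t$. Then
\[
X_{t}^{t_0} - X_{s}^{t_0}
= \int_{s}^{t} \bigl(-A X_{u}^{t_0} + f(u, X_{u}^{t_0})\bigr)\diff u
+ \int_{s}^{t} g(u, X_{u}^{t_0})\diff W_{u}.
\]
Applying the elementary inequality $|a+b|^{p} \leq 2^{p-1}(|a|^{p}+|b|^{p})$ splits $\E[|X_{t}^{t_0}-X_{s}^{t_0}|^{p}]$ into a drift term $I_{1}$ and a diffusion term $I_{2}$.

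For $I_{1}$, I would use Jensen's inequality to push the $p$-th power inside the time integral, obtaining $I_{1} \leq C(t-s)^{p-1}\int_{s}^{t}\E[|{-A X_{u}^{t_0}}+f(u,X_{u}^{t_0})|^{p}]\diff u$. The bound $|A x| \leq |A|\cdot|x|$ together with \eqref{eq:fgsuperlinear} gives $|{-Ax}+f(u,x)|^{p} \leq C(1+|x|)^{p\gamma}$. Since $p \leq p^{*}/\gamma$ implies $p\gamma \leq p^{*}$, Lemma \ref{lem:exactbound} yields $\E[(1+|X_{u}^{t_0}|)^{p\gamma}] \leq C$ uniformly in $u$ and $t_0$. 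This produces $I_{1} \leq C(t-s)^{p}$.

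For $I_{2}$, I would apply the Burkholder--Davis--Gundy inequality to get $I_{2} \leq C\,\E\bigl[\bigl(\int_{s}^{t}|g(u,X_{u}^{t_0})|^{2}\diff u\bigr)^{p/2}\bigr]$, then use Jensen once more to bring the $p/2$-th power inside: $I_{2} \leq C(t-s)^{p/2-1}\int_{s}^{t}\E[|g(u,X_{u}^{t_0})|^{p}]\diff u$. From \eqref{eq:fgsuperlinear}, $|g(u,x)|^{p} \leq C(1+|x|)^{p(\gamma+1)/2}$, and since $\gamma \geq 1$ we have $p(\gamma+1)/2 \leq p\gamma \leq p^{*}$, so Lemma \ref{lem:exactbound} again provides a uniform bound on the integrand, giving $I_{2} \leq C(t-s)^{p/2}$. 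Summing the two estimates yields \eqref{eq:exactholder}.

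The only subtlety is verifying that the integrability threshold $p \leq p^{*}/\gamma$ is the binding one for both pieces; the drift requires moments of order $p\gamma$ while the diffusion requires moments of order $p(\gamma+1)/2$, and the inequality $(\gamma+1)/2 \leq \gamma$ valid for $\gamma \geq 1$ shows the drift condition dominates. Everything else is routine: BDG, Jensen/Hölder, polynomial growth, and the uniform-in-$(t_0,t)$ moment bound established in Lemma \ref{lem:exactbound}. No Gr\"onwall argument is needed because the estimates are local in time over the interval $[s,t]$.
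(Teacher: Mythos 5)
Your proposal is correct and follows essentially the same route as the paper: the same integral decomposition into drift and diffusion parts, the H\"older/Jensen bound giving $(t-s)^{p}$ for the drift, and a moment inequality for the stochastic integral (the paper cites Mao's Theorem 7.1, which is exactly the BDG-plus-Jensen estimate you derive) giving $(t-s)^{p/2}$, all fed by the polynomial growth bound \eqref{eq:fgsuperlinear} and the uniform moment bound of Lemma \ref{lem:exactbound}. Your explicit check that $p\gamma\leq p^{*}$ is the binding integrability requirement is a nice touch that the paper leaves implicit.
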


\begin{proof}
Without loss of generality, we assume $s \leq t$ and get
\begin{align}\label{lem:X:1}
	\notag
	&~\E\big[|X_{t}^{t_0} - X_{s}^{t_0}|^{p}\big]
	\\=&~
	\E\bigg[\Big|\int_{s}^{t}
	-AX_{r}^{t_0} + f(r,X_{r}^{t_0}) \diff{r}
	+
	\int_{s}^{t} g(r,X_{r}^{t_0}) \diff{W_{r}}\Big|^{p}\bigg]
	\\\leq&~\notag
	C\E\bigg[\Big|\int_{s}^{t}
	-AX_{r}^{t_0} + f(r,X_{r}^{t_0})
	\diff{r}\Big|^{p}\bigg]
	+
	C \E\bigg[\Big|\int_{s}^{t} g(r,X_{r}^{t_0}) \diff{W_{r}}\Big|^{p}\bigg].
\end{align}
By the H\"older inequality, \eqref{eq:fgsuperlinear} and Lemma \ref{lem:exactbound}, we have
\begin{align}\label{lem:X:f}
	&~\notag
	\E\bigg[\Big|\int_{s}^{t}
	-AX_{r}^{t_0} + f(r,X_{r}^{t_0})
	\diff{r}\Big|^{p}\bigg]
	\\\leq&~\notag
	(t-s)^{p-1}\int_{s}^{t}\E\big[|
	-AX_{r}^{t_0} + f(r,X_{r}^{t_0})
	|^{p}\big]\diff{r}
	\\\leq&~
	C(t-s)^{p-1}\int_{s}^{t}
	\E\big[|X_{r}^{t_0}|^{p}\big]
	+
	\E\big[|f(r,X_{r}^{t_0})|^{p}\big]\diff{r}
	\\\leq&~\notag
	C(t-s)^{p-1}\int_{s}^{t}
	1 + \E\big[|X_{r}^{t_0}|^{p}\big]
	+
	\E\big[|X_{r}^{t_0}|^{p\gamma}\big]\diff{r}
	\\\leq&~\notag
	C(t-s)^{p}.
\end{align}
Applying \cite[Theorem 7.1 in Chapter 1]{mao2008stochastic}, \eqref{eq:fgsuperlinear} and Lemma \ref{lem:exactbound} yields
\begin{align}\label{lem:X:s}
	\E\bigg[\Big|\int_{s}^{t}
	g(r,X_{r}^{t_0}) \diff{W_{r}}\Big|^{p}\bigg]
	\leq&~\notag
	C(t-s)^{\frac{p}{2}-1}\int_{s}^{t}
	\E\big[|g(r,X_{r}^{t_0})|^{p}\big] \diff{r}
	\\\leq&~
	C(t-s)^{\frac{p}{2}-1}\int_{s}^{t}
	1 + \E\big[|X_{r}^{t_0}
	|^{\frac{p(\gamma+1)}{2}}\big] \diff{r}
	\\\leq&~\notag
	C(t-s)^{\frac{p}{2}}.
\end{align}
Substituting \eqref{lem:X:f} and \eqref{lem:X:s} into \eqref{lem:X:1} enables us to derive \eqref{eq:exactholder}.
\end{proof}

Furthermore, one can show the H\"{o}lder continuity of the exact solution with respect to the initial time.
\begin{lemma}\label{lem:exactholderinitial}
      Suppose that Assumption \ref{asm:L} holds. Then for any $p \in [2,\frac{p^{*}}{\gamma}]$, $t \geq t_{0}$ and $t \geq s_{0}$, there exists a constant $C > 0$, independent of $t, s_{0}, t_{0}$, such that
      \begin{align*}
            \E\big[|X_{t}^{s_{0}}(\xi) - X_{t}^{t_{0}}(\xi)|^{p}\big]
            \leq
            C(|t_{0}-s_{0}|^{\frac{p}{2}} + |t_{0}-s_{0}|^{p}).
      \end{align*}
\end{lemma}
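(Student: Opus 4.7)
By symmetry it suffices to treat the case $s_{0}\leq t_{0}$ (the opposite case follows by swapping the roles of $s_{0}$ and $t_{0}$). The plan is to reduce the problem to the two results already proved, namely the continuous dependence on the initial value (Lemma \ref{lem:rob}) and the H\"older continuity in the final time (Lemma \ref{lem:exactholder}), via the semiflow property \eqref{eq:semiflow}.

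First I would use \eqref{eq:semiflow} to write, for $t\geq t_{0}\geq s_{0}$,
\begin{align*}
X_{t}^{s_{0}}(\xi)
=
X_{t}^{t_{0}}\bigl(X_{t_{0}}^{s_{0}}(\xi)\bigr),
\end{align*}
so that
\begin{align*}
X_{t}^{s_{0}}(\xi)-X_{t}^{t_{0}}(\xi)
=
X_{t}^{t_{0}}\bigl(X_{t_{0}}^{s_{0}}(\xi)\bigr)-X_{t}^{t_{0}}(\xi).
\end{align*}
The two processes on the right start at time $t_{0}$ from the random initial data $X_{t_{0}}^{s_{0}}(\xi)$ and $\xi$ respectively. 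Applying Lemma \ref{lem:rob} (whose moment hypotheses are verified for $X_{t_{0}}^{s_{0}}(\xi)$ by Lemma \ref{lem:exactbound} and for $\xi$ by (A4)) yields
\begin{align*}
\E\bigl[|X_{t}^{s_{0}}(\xi)-X_{t}^{t_{0}}(\xi)|^{p}\bigr]
\leq
e^{p(L_{f}-\lambda)(t-t_{0})}
\E\bigl[|X_{t_{0}}^{s_{0}}(\xi)-\xi|^{p}\bigr].
\end{align*}
Since $L_{f}<\lambda$, the exponential factor is bounded by $1$ uniformly in $t\geq t_{0}$, which is crucial for obtaining a constant independent of $t$ and $t_{0}$.

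Next I would control $\E[|X_{t_{0}}^{s_{0}}(\xi)-\xi|^{p}]$ by applying Lemma \ref{lem:exactholder} to the process $\{X_{r}^{s_{0}}\}_{r\geq s_{0}}$ with the choices (in the notation of that lemma) $t_{0}\leftarrow s_{0}$, $s\leftarrow s_{0}$, $t\leftarrow t_{0}$, noting that $X_{s_{0}}^{s_{0}}(\xi)=\xi$. This gives
\begin{align*}
\E\bigl[|X_{t_{0}}^{s_{0}}(\xi)-\xi|^{p}\bigr]
\leq
C\bigl(|t_{0}-s_{0}|^{p/2}+|t_{0}-s_{0}|^{p}\bigr),
\end{align*}
with $C$ independent of $s_{0}$ and $t_{0}$. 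Substituting into the previous display gives the desired bound. The case $t_{0}\leq s_{0}$ is completely analogous, using $X_{t}^{t_{0}}(\xi)=X_{t}^{s_{0}}(X_{s_{0}}^{t_{0}}(\xi))$.

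I do not anticipate any serious obstacle: the H\"older exponents and the range of $p$ are inherited directly from Lemma \ref{lem:exactholder}, the dissipativity $L_{f}<\lambda$ in (A3) absorbs the potentially dangerous factor $e^{p(L_{f}-\lambda)(t-t_{0})}$, and the semiflow identity \eqref{eq:semiflow} cleanly separates the dependence on the initial time from that on the initial value. The only point to check carefully is that the moment hypotheses required to invoke Lemma \ref{lem:rob} with the random initial value $X_{t_{0}}^{s_{0}}(\xi)$ are satisfied, which is immediate from Lemma \ref{lem:exactbound}.
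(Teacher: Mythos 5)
Your proposal is correct and follows essentially the same route as the paper's own proof: both use the semiflow identity to rewrite the difference as $X_{t}^{t_{0}}(X_{t_{0}}^{s_{0}}(\xi))-X_{t}^{t_{0}}(\xi)$, apply Lemma \ref{lem:rob} and bound the factor $e^{p(L_{f}-\lambda)(t-t_{0})}$ by $1$ via $L_{f}<\lambda$, then finish with Lemma \ref{lem:exactholder} applied to $\E[|X_{t_{0}}^{s_{0}}(\xi)-X_{s_{0}}^{s_{0}}(\xi)|^{p}]$. Your additional care about the moment hypotheses for the random initial value $X_{t_{0}}^{s_{0}}(\xi)$ (via Lemma \ref{lem:exactbound}) matches the paper's invocation of that lemma.
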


\begin{proof}
      Without loss of generality, we assume $s_{0} \leq t_{0}$ and use \eqref{eq:semiflow} to get
      \begin{align}
            X_{t}^{s_{0}}(\xi) - X_{t}^{t_{0}}(\xi)
            =
            u_{s_{0},t}(\xi) - u_{t_{0},t}(\xi)
            =
            u_{t_{0},t}(u_{s_{0},t_{0}}(\xi)) - u_{t_{0},t}(\xi)
            =
            X_{t}^{t_{0}}(X_{t_{0}}^{s_{0}}(\xi))
            - X_{t}^{t_{0}}(\xi)
      \end{align}
      almost surely. It follows from Lemmas \ref{lem:exactbound} and \ref{lem:rob}, $L_{f} \in (0,\lambda)$ and $t_{0} \leq t$ that
      \begin{align*}
            \E\big[|X_{t}^{s_{0}}(\xi) - X_{t}^{t_{0}}(\xi)|^{p}\big]
            =&~
            \E\big[|X_{t}^{t_{0}}(X_{t_{0}}^{s_{0}}(\xi))
            - X_{t}^{t_{0}}(\xi)|^{p}\big]
            \\\leq&~
            e^{p(L_{f}-\lambda)(t-t_{0})}
            \E\big[|X_{t_{0}}^{s_{0}}(\xi)
            - \xi|^{p}\big]
            \\\leq&~
            \E\big[|X_{t_{0}}^{s_{0}}(\xi)
            - X_{s_{0}}^{s_{0}}(\xi)|^{p}\big],
      \end{align*}
      which together with Lemma \ref{lem:exactholder} finishes the proof.
\end{proof}

	Now we can prove the existence and uniqueness of random periodic solution for \eqref{eq:SDE}.

	\begin{theorem}\label{thm:ep}
		Suppose that Assumption \ref{asm:L} holds. Then for any initial value $\xi$ satisfying {\rm{(A4)}}, the considered equation \eqref{eq:SDE} admits a unique random periodic solution {$X_{t}^{*} \in L^{2}(\Omega;\mathbb{R}^{d}),t \geq 0$}, given by
		\begin{align}
			\notag
			X_{t}^{*}
			=
			\int_{-\infty}^{t}
			e^{-A(t-s)}f(s,X_{s}^{*})\diff{s}
			+
			\int_{-\infty}^{t}
			e^{-A(t-s)}g(s,X_{s}^{*})\diff{W_{s}},
			\quad t \geq 0
		\end{align}
		such that
		\begin{equation}\label{thm:ep:rs}
			\lim_{k \rightarrow \infty}
			|X_{t}^{-k\tau} - X_{t}^{*}|_{L^{2}(\Omega;\R^{d})}
			= 0.
		\end{equation}
	\end{theorem}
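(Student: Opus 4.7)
The plan is to establish the limit in \eqref{thm:ep:rs} first, then verify the integral representation and random periodic property, and finally argue uniqueness. The driving mechanism is the strict contraction provided by Lemma \ref{lem:rob}: since $L_f - \lambda < 0$, pushing the initial time further into the past produces exponential decay that overwhelms the polynomially controlled initial data.

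First I would show that $\{X_t^{-k\tau}(\xi)\}_{k \in \N}$ is a Cauchy sequence in $L^{2}(\Omega;\R^{d})$ for fixed $t \geq 0$. For integers $m > k \geq 0$, the semiflow identity \eqref{eq:semiflow} gives
\begin{equation*}
      X_t^{-m\tau}(\xi) = u_{-k\tau,t}\bigl(u_{-m\tau,-k\tau}(\xi)\bigr) = X_t^{-k\tau}\bigl(X_{-k\tau}^{-m\tau}(\xi)\bigr),
\end{equation*}
so applying Lemma \ref{lem:rob} with initial values $X_{-k\tau}^{-m\tau}(\xi)$ and $\xi$ at time $-k\tau$ yields
\begin{equation*}
      \E\bigl[|X_t^{-m\tau}(\xi) - X_t^{-k\tau}(\xi)|^{2}\bigr] \leq e^{2(L_{f}-\lambda)(t+k\tau)}\, \E\bigl[|X_{-k\tau}^{-m\tau}(\xi)-\xi|^{2}\bigr].
\end{equation*}
The bracket on the right is uniformly bounded by Lemma \ref{lem:exactbound} and \rm{(A4)}, while the exponential prefactor vanishes as $k \to \infty$ because $L_f < \lambda$. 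Cauchyness forces existence of $X_t^{*} \in L^{2}(\Omega;\R^{d})$ satisfying \eqref{thm:ep:rs}. The very same estimate with $\eta$ in place of $X_{-k\tau}^{-m\tau}(\xi)$ shows that the limit is independent of the initial data $\xi$ (within the class \rm{(A4)}).

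Next I would derive the integral representation. Starting from the variation of constants formula
\begin{equation*}
      X_t^{-k\tau} = e^{-A(t+k\tau)}\xi + \int_{-k\tau}^{t} e^{-A(t-s)} f(s,X_s^{-k\tau})\diff{s} + \int_{-k\tau}^{t} e^{-A(t-s)} g(s,X_s^{-k\tau})\diff{W_s},
\end{equation*}
the first term tends to $0$ in $L^{2}$ by \rm{(A1)} and \rm{(A4)}. For the drift and diffusion integrals, the bounds $|e^{-A(t-s)}| \leq e^{-\lambda(t-s)}$ together with the superlinear estimates \eqref{eq:fgsuperlinear} and the uniform moment bound of Lemma \ref{lem:exactbound} make the tails over $(-\infty,-k\tau]$ convergent. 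The local $L^{2}$ convergence $X_s^{-k\tau} \to X_s^{*}$ on any finite window $[-K\tau,t]$, combined with the polynomial local Lipschitz estimates \eqref{eq:fpolynomial} and \eqref{eq:gpolynomial} and the Cauchy--Schwarz / It\^o isometry inequalities, lets me pass to the limit. To verify the random periodic property, I would exploit the built-in periodicity of the semiflow $u_{s+\tau,t+\tau}(\omega,\xi) = u_{s,t}(\Theta_{\tau}\omega,\xi)$: this gives $X_{t+\tau}^{-k\tau}(\omega,\xi) = X_{t}^{-(k+1)\tau}(\Theta_{\tau}\omega,\xi)$, and taking $k \to \infty$ produces $X_{t+\tau}^{*}(\omega) = X_{t}^{*}(\Theta_{\tau}\omega)$, which is the defining relation for a random periodic solution with period $\tau$.

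For uniqueness, suppose $Y_t \in L^{2}(\Omega;\R^{d})$ is another random periodic solution of \eqref{eq:SDE}. Then for each $k \in \N$ we have $Y_t = X_{t}^{-k\tau}(Y_{-k\tau})$, so Lemma \ref{lem:rob} combined with the limit $X_t^{*}$ (started from $\xi$) gives
\begin{equation*}
      \E\bigl[|X_t^{-k\tau}(\xi) - Y_t|^{2}\bigr] \leq e^{2(L_{f}-\lambda)(t+k\tau)}\, \E\bigl[|\xi - Y_{-k\tau}|^{2}\bigr].
\end{equation*}
Periodicity in law of $Y$ together with \rm{(A4)} bounds the right-hand side up to an exponentially decaying factor, and letting $k \to \infty$ identifies $Y_t$ with $X_t^{*}$. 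The main obstacle is the passage to the limit in the stochastic integral: bounding the tail $\int_{-\infty}^{-k\tau}|e^{-A(t-s)}g(s,X_s^{*})|^{2}\diff{s}$ uniformly and controlling the finite-window error via the polynomial Lipschitz estimate \eqref{eq:gpolynomial} demand careful use of the moment bounds in Lemma \ref{lem:exactbound} to handle the superlinear growth $\gamma \geq 1$.
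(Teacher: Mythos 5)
Your pull-back contraction argument is essentially the one the paper invokes by citing \cite[Theorem 2.4]{feng2017numerical}: the composition $X_t^{-m\tau}(\xi) = X_t^{-k\tau}\bigl(X_{-k\tau}^{-m\tau}(\xi)\bigr)$ together with Lemma \ref{lem:rob} and the uniform moment bound of Lemma \ref{lem:exactbound} gives Cauchyness in $L^{2}$, and the same exponential contraction yields independence of the initial data and uniqueness. That analytic core is sound (note that Lemma \ref{lem:rob} is applicable to the random initial value $X_{-k\tau}^{-m\tau}(\xi)$ precisely because Lemma \ref{lem:exactbound} supplies all moments up to $p^{*}$, so (A4) holds for it).

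The genuine gap is that you never construct the semiflow with respect to which ``$X_t^{*}$ is a random periodic solution'' is even defined. A random periodic solution is a pathwise object: one needs a map $u_{s,t}(\omega,\cdot)$ defined for \emph{all} $s \leq t$, all $x \in \R^{d}$ and all $\omega$ outside a single null set, satisfying the semiflow identity and the shift-periodicity identically, and then $X^{*}$ must satisfy $u_{s,t}(\omega,X_s^{*}(\omega)) = X_t^{*}(\omega)$ for all $s \leq t$ simultaneously together with $X_{t+\tau}^{*}(\omega) = X_t^{*}(\Theta_{\tau}\omega)$. The crude solution map $u_{s,t}$ of \eqref{eq:ustmap} satisfies \eqref{eq:semiflow} only almost surely for each fixed $(s,r,t,x)$, with the exceptional null set depending on these parameters; your manipulations (composition at a random initial datum, passing to the limit for each fixed $t$) only produce identities holding a.s.\ time by time, which is strictly weaker. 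Supplying this missing structure is exactly what the paper's written proof does: it combines Lemmas \ref{lem:rob}, \ref{lem:exactholder} and \ref{lem:exactholderinitial} into a joint continuity estimate, applies the Kolmogorov continuity theorem --- this is where the hypothesis $p^{*} > (d+4)\gamma$ is used, so that $(4+d)/p < 1$ --- then perfects the identities $v_{s,s}(x)=x$ and $v_{s,t} = v_{r,t}\circ v_{s,r}$ over all parameters via a rational-times argument, and finally invokes (A5) with \cite[Definition 1.1]{scheutzow2017strong} to conclude that $v$ is a global semiflow; only then does it defer the contraction step to \cite{feng2017numerical}. The fact that your proposal never uses (A5) nor the dimension-dependent moment condition $p^{*} > (d+4)\gamma$ is the symptom of this omission: without the perfection step, what you build is a family of $L^{2}$ limits indexed by $t$, not a random periodic solution of a stochastic semiflow.
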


\begin{proof}
Noting that \eqref{eq:SDE} admits a unique global solution,
we will first follow the procedures presented in the proof of \cite[Theorem2.5 and Proposition 5.2]{scheutzow2017strong} to attest that \eqref{eq:SDE} admits a global semiflow.
Actually, applying Lemmas \ref{lem:rob}, \ref{lem:exactholder} and \ref{lem:exactholderinitial} shows that for any $T_1,T_2 \in \R$ with $T_1<T_2$ and $p \in [2,\frac{p^{*}}{\gamma}]$, there exists $C(T_1,T_2) > 0$  such that for any $x,y \in \R^{d}$ and $s_{0},s, t_{0}, t \in [T_1, T_2]$ with $s_{0} \leq s, t_{0} \leq t$,
      \begin{align}
            \E\big[|X_{s}^{s_{0}}(x) - X_{t}^{t_{0}}(y)|^{p}\big]
            \leq
            C(T_1,T_2) (|s_{0}-t_{0}|^{\frac{p}{2}}
            + |s-t|^{\frac{p}{2}} + |x-y|^{p}).
      \end{align}
      Then $\frac{4+d}{p}<1 $ and the Kolmogorov continuity theorem \cite[Theorem 1.4.1]{kunita1990stochastic} ensure that there exists a modification $v_{s,t}$ of $u_{s,t}$ in \eqref{eq:ustmap} such that $(s,t,x) \mapsto v_{s,t}(x)$ is continuous for all $\omega \in \Omega$.

      Let $\{r_{n}\}_{n \in \N}$ be the sequence of rational numbers in $\R$. Then for each $n \in \N$, the fact $v_{s,s}(x) = u_{s,s}(x) = x$ almost surely for all $s \in \R$ and $x \in \R^{d}$ shows that there exists $\Omega_{n} \subset \Omega$ with $\P(\Omega_{n}) = 0$ such that $v_{r_{n},r_{n}}(\omega,x) = x$ for all $x \in \R^{d}$ and $\omega \in \Omega_{n}^{c}$. Putting $\widetilde{\Omega} := \cup_{n = 1}^{\infty}\Omega_{n}$ yields $\P(\widetilde{\Omega}) = 0$ and $v_{r_{n},r_{n}}(\omega,x) = x, n \in \N$ for all $x \in \R^{d}$ and $\omega \in \widetilde{\Omega}^{c}$. Together with the continuity of $(s,x) \mapsto v_{s,s}(\omega,x)$ for all $\omega \in \Omega$, we see that $v_{s,s}(\omega,x) = x$ for all $s \in \R$, $x \in \R^{d}$ and $\omega \in \widetilde{\Omega}^{c}$. Redefining $v_{s,t}(\omega,x) := x$ for any $s,t \in \R$ with $s \leq t$, $x \in \R^{d}$ and $\omega \in \widetilde{\Omega}$ enables us to derive $v_{s,s}(\omega,x) = x$ for all $s \in \R$, $x \in \R^{d}$ and $\omega \in \Omega$.

      Observing \eqref{eq:semiflow}, the uniqueness of global solution of \eqref{eq:SDE} ensures that
      \begin{equation}\label{eq:vsemiflow}
            v_{s,t}(x)
            =
            v_{r,t}(v_{s,r}(x)),
            \quad
            s \leq r \leq t,
            ~x \in \R^{d},
            ~\text{almost surely}.
      \end{equation}
      Because the functions on the both sides of \eqref{eq:vsemiflow} are continuous with respect to $(s,r,t,x)$, there exists $\widehat{\Omega} \subset \Omega$ with $\P(\widehat{\Omega}) = 0$ such that $v_{s,t}(\omega,x) = v_{r,t}(\omega,v_{s,r}(\omega,x))$ for all $s,r,t \in \R$ with $s \leq r \leq t$, all $x \in \R^{d}$ and all $\omega \in \widehat{\Omega}^{c}$. Redefining $v_{s,t}(\omega,x) := x$ for any $s,t \in \R$ with $s \leq t$, $x \in \R^{d}$ and $\omega \in \widehat{\Omega}$ results in $v_{s,t}(\omega,x) = v_{r,t}(\omega,v_{s,r}(\omega,x))$ for all $s,r,t \in \R$ with $s \leq r \leq t$, all $x \in \R^{d}$ and all $\omega \in \Omega$.

Applying (A5) in Assumption \ref{asm:L} and  \cite[Definition 1.1]{scheutzow2017strong} indicates that $v$ is a global semiflow for \eqref{eq:SDE}.
%
%
%
The remainder of this proof is analogous to that of \cite[Theorem 2.4]{feng2017numerical} and thus omitted.
%
%
%
%
\end{proof}

\section{Random periodic solutions of ST methods}\label{sec:solutionST}

This section aims to show the existence and uniqueness of random periodic solutions for the ST methods. For this purpose, let us first verify the well-posedness of \eqref{eq:BEM}. In fact, \eqref{eq:fgmonotonicity} implies
\begin{equation*}
	\langle x-y,f(t,x)-f(t,y) \rangle
	\leq
	L_{f}|x-y|^2, \quad x,y \in\mathbb{R}^d, t \in \R.
\end{equation*}
According to the uniform monotonicity theorem
\cite[Theorem C.2]{stuart1996dynamical} and using $L_{f} \in (0,\lambda)$, we know that \eqref{eq:BEM} admits a unique solution $\{\hat{X}_{-k\tau+t_{j}}^{-k\tau}\}_{j \in \N}$
for any stepsize $\Delta \in (0,1)$. Besides, there is a uniform bound for the second moment of the numerical solution $\{\hat{X}_{-k\tau+t_{j}}^{-k\tau}\}_{j \in \N}$, as indicated below.


\begin{lemma}\label{lem:numbound}
	Suppose that Assumption \ref{asm:L} holds and that $\theta \in (\frac{1}{2},1]$. Then there exists a constant $C > 0$, independent of $j,k$ and $\Delta$, such that
	\begin{equation}\label{lem:bound:rs}
		\sup_{k,j\in \mathbb{N}}
		\E\big[|\hat{X}_{-k\tau+t_{j}}^{-k\tau}|^2\big]
		\leq C.
	\end{equation}
\end{lemma}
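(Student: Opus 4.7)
The plan is to construct the Lyapunov-type functional
\[
V_{j} := (1+2\theta\alpha\Delta)\,\E\bigl[|\hat{X}_{j}|^{2}\bigr] + \theta(p^{*}-1)\Delta\,\E\bigl[|g(t_{j},\hat{X}_{j})|^{2}\bigr] + \theta^{2}\Delta^{2}\,\E\bigl[|F(t_{j},\hat{X}_{j})|^{2}\bigr]
\]
with $F(t,x):=-Ax+f(t,x)$, $\alpha:=\lambda-\widetilde{L_{f}}>0$, and $\hat{X}_{j}:=\hat{X}_{-k\tau+t_{j}}^{-k\tau}$, and then to establish a contractive one-step recursion $V_{j+1}\leq(1-c\Delta)V_{j}+2\Delta\widetilde{C}$ for some $c>0$ independent of $\Delta\in(0,1)$ and $j,k\in\N$. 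Since $\E[|\hat{X}_{j}|^{2}]\leq V_{j}/(1+2\theta\alpha\Delta)\leq V_{j}$, iterating this recursion and bounding $V_{0}$ will deliver \eqref{lem:bound:rs}.

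For the one-step estimate, rearrange \eqref{eq:BEM} as
\[
\hat{X}_{j+1}-\theta\Delta F(t_{j+1},\hat{X}_{j+1}) = \hat{X}_{j}+(1-\theta)\Delta F(t_{j},\hat{X}_{j}) + g(t_{j},\hat{X}_{j})\Delta W_{-k\tau+t_{j}},
\]
square both sides, and take expectation. The Brownian cross term vanishes by $\F_{t_{j}}$-measurability of $\hat{X}_{j},\,F(t_{j},\hat{X}_{j}),\,g(t_{j},\hat{X}_{j})$, while the martingale square contributes $\Delta\,\E[|g(t_{j},\hat{X}_{j})|^{2}]$. Invoking the monotonicity consequence \eqref{eq:monocondition} twice -- at $(t_{j+1},\hat{X}_{j+1})$ to bound $-2\theta\Delta\,\E[\langle\hat{X}_{j+1},F(t_{j+1},\hat{X}_{j+1})\rangle]$ from below by $2\theta\alpha\Delta\,\E[|\hat{X}_{j+1}|^{2}] - 2\theta\Delta\widetilde{C} + \theta(p^{*}-1)\Delta\,\E[|g(t_{j+1},\hat{X}_{j+1})|^{2}]$, and at $(t_{j},\hat{X}_{j})$ to upper-bound $2(1-\theta)\Delta\,\E[\langle\hat{X}_{j},F(t_{j},\hat{X}_{j})\rangle]$ analogously -- the left-hand side becomes $V_{j+1}-2\theta\Delta\widetilde{C}$, so that
\begin{align*}
V_{j+1} &\leq (1-2(1-\theta)\alpha\Delta)\,\E[|\hat{X}_{j}|^{2}] + (1-(1-\theta)(p^{*}-1))\Delta\,\E[|g(t_{j},\hat{X}_{j})|^{2}] \\
&\quad+ (1-\theta)^{2}\Delta^{2}\,\E[|F(t_{j},\hat{X}_{j})|^{2}] + 2\Delta\widetilde{C}.
\end{align*}

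Matching each of the three $j$-indexed coefficients on the right-hand side against its counterpart in $(1-c\Delta)V_{j}$ leaves three constraints on $c$: (i) $c(1+2\theta\alpha\Delta)\leq 2\alpha$, from the $|\hat{X}_{j}|^{2}$ coefficient; (ii) a constraint on the $\Delta\,\E[|g_{j}|^{2}]$ coefficient that is vacuous when $(1-\theta)(p^{*}-1)\geq 1$ and otherwise reduces to $c\Delta\theta(p^{*}-1)\leq p^{*}-2$, positive by $p^{*}>2$; and (iii) $c\Delta\theta^{2}\leq 2\theta-1$, positive by $\theta>1/2$. Each inequality yields a strictly positive bound uniform in $\Delta\in(0,1)$, so taking $c$ to be the minimum gives the desired recursion. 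Iterating produces $V_{j}\leq V_{0}+2\widetilde{C}/c$, and $V_{0}$ is bounded uniformly in $k$ by (A4) combined with \eqref{eq:fgsuperlinear}, since the relevant moment orders $\gamma+1$ and $2\gamma$ both lie below $p^{*}$ by $p^{*}>(d+4)\gamma\geq 2\gamma$.

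The main obstacle -- and the reason for including the $|g|$- and $|F|$-penalty terms in $V_{j}$ -- is that a single-term Lyapunov such as $\E[|\hat{X}_{j}|^{2}]$ alone, or $\E[|\hat{X}_{j}-\theta\Delta F(t_{j},\hat{X}_{j})|^{2}]$ alone, does not close into a contractive recursion under only the one-sided growth hypothesis \eqref{asm:fg-fg}: the superlinear growth of $f$ (exponent $\gamma\geq 1$) obstructs any $\Delta$-uniform upper bound of the auxiliary $L^{2}$-norm by a linear function of $\E[|\hat{X}_{j}|^{2}]$. The $\Delta$- and $\Delta^{2}$-weighted penalty terms absorb exactly the cross terms generated by the implicit $\theta$-step on the $t_{j+1}$-side, enabling the coefficient-by-coefficient comparison.
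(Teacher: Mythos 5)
Your proof is correct and follows essentially the same route as the paper's: the same rearrangement of \eqref{eq:BEM} isolating the implicit part, the same three-term functional combining $\E\big[|\hat{X}_{j}|^{2}\big]$, $\Delta\,\E\big[|g(t_{j},\hat{X}_{j})|^{2}\big]$ and $\Delta^{2}\,\E\big[|{-A}\hat{X}_{j}+f(t_{j},\hat{X}_{j})|^{2}\big]$, the same applications of \eqref{eq:Ainequality} and \eqref{eq:monocondition} on both sides, the same coefficient-by-coefficient contraction (your constraints (i)--(iii) correspond exactly to the three ratios defining the paper's $\widetilde{C_{\Delta}}$), and the same iteration plus the moment bound on the initial data via (A4) and \eqref{eq:fgsuperlinear}. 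The only difference is packaging: the paper iterates $V_{j+1}-b\leq \widetilde{C_{\Delta}}\,(V_{j}-b)$ with $b=\tfrac{2\Delta\widetilde{C}}{1-\widetilde{C_{\Delta}}}$ and then bounds $\Delta/(1-\widetilde{C_{\Delta}})$ uniformly, which is equivalent to your recursion $V_{j+1}\leq(1-c\Delta)V_{j}+2\Delta\widetilde{C}$.
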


\begin{proof}
Noticing that \eqref{eq:BEM} implies
\begin{align}\label{eq:hatXerror}
	\notag
	&~\E\big[|\hat{X}_{-k\tau+t_{j+1}}^{-k\tau}
	-
	\theta\Delta \big( -A \hat{X}_{-k\tau+t_{j+1}}^{-k\tau}
	+
	f(t_{j+1},\hat{X}_{-k\tau+t_{j+1}}^{-k\tau}) \big)|^{2}\big]
	\\=&~
	\E\big[|\hat{X}_{-k\tau+t_{j}}^{-k\tau}
	+
	(1-\theta)\Delta \big(-A \hat{X}_{-k\tau+t_{j}}^{-k\tau}
	+
	f(t_{j},\hat{X}_{-k\tau+t_{j}}^{-k\tau}) \big)
	+
	g(t_{j},\hat{X}_{-k\tau+t_{j}}^{-k\tau})
	\Delta W_{-k\tau+t_{j}}|^{2}\big]
	\\=&~\notag
	\E\big[|\hat{X}_{-k\tau+t_{j}}^{-k\tau}
	+
	(1-\theta)\Delta \big(-A \hat{X}_{-k\tau+t_{j}}^{-k\tau}
	+
	f(t_{j},\hat{X}_{-k\tau+t_{j}}^{-k\tau}) \big)|^{2}\big]
	+
	\Delta \E\big[|g(t_{j},\hat{X}_{-k\tau+t_{j}}^{-k\tau})|^{2}\big],
\end{align}
where we have used
\begin{align*}
	\E\big[\big\langle
	\hat{X}_{-k\tau+t_{j}}^{-k\tau}
	+
	(1-\theta)\Delta \big(-A \hat{X}_{-k\tau+t_{j}}^{-k\tau}
	+
	f(t_{j},\hat{X}_{-k\tau+t_{j}}^{-k\tau}),
	g(t_{j},\hat{X}_{-k\tau+t_{j}}^{-k\tau})
	\Delta{W_{-k\tau+t_{j}}}
	\big\rangle\big]
	=
	0
\end{align*}
due to the $\F_{-k\tau+t_{j}}$-measurability of $\hat{X}_{-k\tau+t_{j}}^{-k\tau}$ and the independence between $\hat{X}_{-k\tau+t_{j}}^{-k\tau}$ and  $\Delta{W_{-k\tau+t_{j}}}$ for all $j$. On one hand, \eqref{eq:Ainequality} and \eqref{eq:monocondition} enable us to show
\begin{align*}
	&~\E\big[|\hat{X}_{-k\tau+t_{j+1}}^{-k\tau}
	-
	\theta\Delta \big( -A \hat{X}_{-k\tau+t_{j+1}}^{-k\tau}
	+
	f(t_{j+1},\hat{X}_{-k\tau+t_{j+1}}^{-k\tau}) \big)|^{2}\big]
	\\=&~
	\E\big[|\hat{X}_{-k\tau+t_{j+1}}^{-k\tau}|^{2}\big]
	+
	\theta^{2}\Delta^{2}
	\E\big[|A \hat{X}_{-k\tau+t_{j+1}}^{-k\tau}
	-
	f(t_{j+1},\hat{X}_{-k\tau+t_{j+1}}^{-k\tau})|^{2}\big]
	\\&~
	+
	2\theta\Delta\E\big[\big\langle
	\hat{X}_{-k\tau+t_{j+1}}^{-k\tau},
	A \hat{X}_{-k\tau+t_{j+1}}^{-k\tau}\big\rangle\big]
	-
	2\theta\Delta\E\big[\big\langle
	\hat{X}_{-k\tau+t_{j+1}}^{-k\tau},
	f(t_{j+1},\hat{X}_{-k\tau+t_{j+1}}^{-k\tau})
	\big\rangle\big]
	\\\geq&~
	\E\big[|\hat{X}_{-k\tau+t_{j+1}}^{-k\tau}|^{2}\big]
	+
	\theta^{2}\Delta^{2}
	\E\big[|A \hat{X}_{-k\tau+t_{j+1}}^{-k\tau}
	-
	f(t_{j+1},\hat{X}_{-k\tau+t_{j+1}}^{-k\tau})|^{2}\big]
	\\&~
	+
	2\lambda\theta\Delta
	\E\big[|\hat{X}_{-k\tau+t_{j+1}}^{-k\tau}|^{2}\big]
	+
	(p^{*}-1)\theta\Delta\E\big[|g(t_{j+1},
	\hat{X}_{-k\tau+t_{j+1}}^{-k\tau})|^{2}\big]
	\\&~-
	2\widetilde{L_{f}}\theta\Delta
	\E\big[|\hat{X}_{-k\tau+t_{j+1}}^{-k\tau}|^{2}\big]
	-
	2\widetilde{C}\theta\Delta
	\\=&~
	\big(1+2(\lambda-\widetilde{L_{f}})\theta\Delta\big)
	\E\big[|\hat{X}_{-k\tau+t_{j+1}}^{-k\tau}|^{2}\big]
	+
	(p^{*}-1)\theta\Delta\E\big[|g(t_{j+1},
	\hat{X}_{-k\tau+t_{j+1}}^{-k\tau})|^{2}\big]
	\\&~
	+
	\theta^{2}\Delta^{2}
	\E\big[|A \hat{X}_{-k\tau+t_{j+1}}^{-k\tau}
	-
	f(t_{j+1},\hat{X}_{-k\tau+t_{j+1}}^{-k\tau})|^{2}\big]
	-
	2\widetilde{C}\theta\Delta.
\end{align*}
On the other hand, using \eqref{eq:Ainequality} and \eqref{eq:monocondition} again yields
\begin{align*}
	&~\E\big[|\hat{X}_{-k\tau+t_{j}}^{-k\tau}
	+
	(1-\theta)\Delta \big(-A \hat{X}_{-k\tau+t_{j}}^{-k\tau}
	+
	f(t_{j},\hat{X}_{-k\tau+t_{j}}^{-k\tau}) \big)|^{2}\big]
	\\=&~
	\E\big[|\hat{X}_{-k\tau+t_{j}}^{-k\tau}|^{2}\big]
	+
	(1-\theta)^{2}\Delta^{2}
	\E\big[|-A \hat{X}_{-k\tau+t_{j}}^{-k\tau}
	+
	f(t_{j},\hat{X}_{-k\tau+t_{j}}^{-k\tau})|^{2}\big]
	\\&~
	+
	2(1-\theta)\Delta\E\big[\big\langle
	\hat{X}_{-k\tau+t_{j}}^{-k\tau},
	-A \hat{X}_{-k\tau+t_{j}}^{-k\tau}
	+
	f(t_{j},\hat{X}_{-k\tau+t_{j}}^{-k\tau})
	\big\rangle\big]
	\\\leq&~
	\E\big[|\hat{X}_{-k\tau+t_{j}}^{-k\tau}|^{2}\big]
	+
	(1-\theta)^{2}\Delta^{2}
	\E\big[|-A \hat{X}_{-k\tau+t_{j}}^{-k\tau}
	+
	f(t_{j},\hat{X}_{-k\tau+t_{j}}^{-k\tau})|^{2}\big]
	\\&~
	-
	2\lambda(1-\theta)\Delta
	\E\big[|\hat{X}_{-k\tau+t_{j}}^{-k\tau}|^{2}\big]
	+
	2\widetilde{L_{f}}(1-\theta)\Delta
	\E\big[|\hat{X}_{-k\tau+t_{j}}^{-k\tau}|^{2}\big]
	\\&~
	+
	2(1-\theta)\Delta\widetilde{C}
	-
	(p^{*}-1)(1-\theta)\Delta \E\big[|g(t_{j},
	\hat{X}_{-k\tau+t_{j}}^{-k\tau})|^{2}\big]
	\\=&~
	\big(1-2(\lambda-\widetilde{L_{f}})(1-\theta)\Delta\big)
	\E\big[|\hat{X}_{-k\tau+t_{j}}^{-k\tau}|^{2}\big]
	\\&~+
	(1-\theta)^{2}\Delta^{2}
	\E\big[|A \hat{X}_{-k\tau+t_{j}}^{-k\tau}
	-
	f(t_{j},\hat{X}_{-k\tau+t_{j}}^{-k\tau})|^{2}\big]
	\\&~
	+
	2(1-\theta)\Delta\widetilde{C}
	-
	(p^{*}-1)(1-\theta)\Delta \E\big[|g(t_{j},
	\hat{X}_{-k\tau+t_{j}}^{-k\tau})|^{2}\big].
\end{align*}
It follows that
\begin{align*}
	&~\big(1+2(\lambda-\widetilde{L_{f}})\theta\Delta\big)
	\E\big[|\hat{X}_{-k\tau+t_{j+1}}^{-k\tau}|^{2}\big]
	\\&~+
	(p^{*}-1)\theta\Delta\E\big[|g(t_{j+1},
	\hat{X}_{-k\tau+t_{j+1}}^{-k\tau})|^{2}\big]
	\\&~
	+
	\theta^{2}\Delta^{2}
	\E\big[|A \hat{X}_{-k\tau+t_{j+1}}^{-k\tau}
	-
	f(t_{j+1},\hat{X}_{-k\tau+t_{j+1}}^{-k\tau})|^{2}\big]
	\\\leq&~
	\big(1-2(\lambda-\widetilde{L_{f}})(1-\theta)\Delta\big)
	\E\big[|\hat{X}_{-k\tau+t_{j}}^{-k\tau}|^{2}\big]
	\\&~
	+
	\big((p^{*}-1)\theta - (p^{*}-2)\big)
	\Delta \E\big[|g(t_{j},
	\hat{X}_{-k\tau+t_{j}}^{-k\tau})|^{2}\big]
	\\&~+
	(1-\theta)^{2}\Delta^{2}
	\E\big[|A \hat{X}_{-k\tau+t_{j}}^{-k\tau}
	-
	f(t_{j},\hat{X}_{-k\tau+t_{j}}^{-k\tau})|^{2}\big]
	+
	2\Delta\widetilde{C}
	\\\leq&~
	\big(1+2(\lambda-\widetilde{L_{f}})\theta\Delta
	-2(\lambda-\widetilde{L_{f}})\theta\Delta\big)
	\E\big[|\hat{X}_{-k\tau+t_{j}}^{-k\tau}|^{2}\big]
	\\&~
	+
	\Big((p^{*}-1)\theta - \frac{p^{*}-2}{2}\Big)
	\Delta \E\big[|g(t_{j},
	\hat{X}_{-k\tau+t_{j}}^{-k\tau})|^{2}\big]
	\\&~+
	(\theta^{2}-2\theta+1)\Delta^{2}
	\E\big[|A \hat{X}_{-k\tau+t_{j}}^{-k\tau}
	-
	f(t_{j},\hat{X}_{-k\tau+t_{j}}^{-k\tau})|^{2}\big]
	+
	2\Delta\widetilde{C}.
\end{align*}
Setting
\begin{align*}
	\widetilde{C_{\Delta}}
	:=
	\max\bigg\{1-
	\frac{2(\lambda-\widetilde{L_{f}})\theta\Delta}
	{1+2(\lambda-\widetilde{L_{f}})\theta\Delta},~
	1-\frac{p^{*}-2}{2(p^{*}-1)\theta},~
	1-\frac{2\theta-1}{\theta^{2}}\bigg\}
	\in [0,1) 
\end{align*}
leads to
\begin{align*}
	&~\big(1+2(\lambda-\widetilde{L_{f}})\theta\Delta\big)
	\E\big[|\hat{X}_{-k\tau+t_{j+1}}^{-k\tau}|^{2}\big]
	+
	(p^{*}-1)\theta\Delta\E\big[|g(t_{j+1},
	\hat{X}_{-k\tau+t_{j+1}}^{-k\tau})|^{2}\big]
	\\&~
	+
	\theta^{2}\Delta^{2}
	\E\big[|A \hat{X}_{-k\tau+t_{j+1}}^{-k\tau}
	-
	f(t_{j+1},\hat{X}_{-k\tau+t_{j+1}}^{-k\tau})|^{2}\big]
	-
	\frac{2\Delta\widetilde{C}}{1-\widetilde{C_{\Delta}}}
	\\\leq&~
	\widetilde{C_{\Delta}}
	\Big(\big(1+2(\lambda-\widetilde{L_{f}})
	\theta\Delta\big)
	\E\big[|\hat{X}_{-k\tau+t_{j}}^{-k\tau}|^{2}\big]
	+
	(p^{*}-1)\theta
	\Delta \E\big[|g(t_{j},
	\hat{X}_{-k\tau+t_{j}}^{-k\tau})|^{2}\big]
	\\&~+
	\theta^{2} \Delta^{2}
	\E\big[|A \hat{X}_{-k\tau+t_{j}}^{-k\tau}
	-
	f(t_{j},\hat{X}_{-k\tau+t_{j}}^{-k\tau})|^{2}\big]
	-
	\frac{2\Delta\widetilde{C}}{1-\widetilde{C_{\Delta}}} \Big).
\end{align*}
By iteration, we use \eqref{eq:fgsuperlinear} to obtain
\begin{align*}
	&~\big(1+2(\lambda-\widetilde{L_{f}})
	\theta\Delta\big)
	\E\big[|\hat{X}_{-k\tau+t_{j}}^{-k\tau}|^{2}\big]
	-
	\frac{2\Delta\widetilde{C}}{1-\widetilde{C_{\Delta}}}
	\\\leq&~
	\widetilde{C_{\Delta}}^{j}
	\bigg(\big(1+2(\lambda-\widetilde{L_{f}})
	\theta\Delta\big)\E\big[|\xi|^{2}\big]
	+
	(p^{*}-1)\theta
	\Delta \E\big[|g(0,\xi)|^{2}\big]
	\\&~+
	\theta^{2} \Delta^{2}
	\E\big[|A \xi
	-
	f(0,\xi)|^{2}\big]
	-
	\frac{2\Delta\widetilde{C}}
	{1-\widetilde{C_{\Delta}}} \bigg)
	\\\leq&~
	C\big(1 + \E\big[|\xi|^{2}\big]
	+
	\E\big[|\xi|^{\gamma+1}\big]
	+
	\E\big[|\xi|^{2\gamma}\big]\big).
\end{align*}
Together with
%
\begin{align*}
	\frac{\Delta}{1-\widetilde{C_{\Delta}}}
	=&~
	\Delta\max\bigg\{
	\frac{1+2(\lambda-\widetilde{L_{f}})\theta\Delta}
	{2(\lambda-\widetilde{L_{f}})\theta\Delta},~
	\frac{2(p^{*}-1)\theta}{p^{*}-2},~
	\frac{\theta^{2}}{2\theta-1}\bigg\}
	\\=&~
	\max\bigg\{
	\frac{1+2(\lambda-\widetilde{L_{f}})\theta\Delta}
	{2(\lambda-\widetilde{L_{f}})\theta},~
	\frac{2(p^{*}-1)\theta\Delta}{p^{*}-2},~
	\frac{\theta^{2}\Delta}{2\theta-1}\bigg\}
	\\\leq&~
	\max\bigg\{
	\frac{1+2(\lambda-\widetilde{L_{f}})\theta}
	{2(\lambda-\widetilde{L_{f}})\theta},~
	{\frac{2(p^{*}-1)\theta}{p^{*}-2}},~
	\frac{\theta^{2}}{2\theta-1}\bigg\}
	:= \bar{C},
\end{align*}
we derive
\begin{align*}
	\E\big[|\hat{X}_{-k\tau+t_{j}}^{-k\tau}|^{2}\big]
	\leq
	C\big(1 + \E\big[|\xi|^{2}\big]
	+
	\E\big[|\xi|^{\gamma+1}\big]
	+
	\E\big[|\xi|^{2\gamma}\big]\big)
	+
	2\bar{C}\widetilde{C},
\end{align*}
and finish the proof.
\end{proof}

The following lemma indicates that any two numerical solutions starting from different initial conditions can be arbitrarily close after sufficiently many iterations.

\begin{lemma}\label{lem:numrob}
      Let $\{\hat{X}_{-k\tau+t_{j}}^{-k\tau}\}$ and $\{\hat{Y}_{-k\tau+t_{j}}^{-k\tau}\}$ be two solutions of {the ST methods \eqref{eq:BEM}} with different initial values $\xi$ and $\eta$, respectively. Suppose that Assumption \ref{asm:L} holds for both initial values $\xi$ and $\eta$. Then {there exists a constant $C > 0$}, independent of $j$ and $k$, such that
	\begin{equation}\label{lem:numrob:rs}
		\E\big[|\hat{X}_{-k\tau+t_{j}}^{-k\tau}
        - \hat{Y}_{-k\tau+t_{j}}^{-k\tau}|^{2}\big]
		\leq
		CC_{\Delta}^{j} \Big(\E[|\xi-\eta|^{2}]
		+
        \big(\E\big[|\xi-\eta|^{2\gamma}\big]
        \big)^{\frac{1}{\gamma}} \Big),
	\end{equation}
	where
	\begin{align}\label{eq:CDelta}
		C_{\Delta}
		:=
		\max\bigg\{
		1-\frac{2(\lambda-L_{f})\theta\Delta}
		{1+2(\lambda-L_{f})\theta\Delta},~
		1-\frac{p^{*}-2}{2(p^{*}-1)\theta},~
		1-\frac{2\theta-1}{\theta^{2}}\bigg\} \in [0,1).
	\end{align}
\end{lemma}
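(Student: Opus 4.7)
The plan is to parallel the derivation of Lemma \ref{lem:numbound} closely, replacing the absolute dissipativity \eqref{eq:monocondition} by the difference dissipativity \eqref{eq:fgmonotonicity}. Write $e_{j} := \hat{X}_{-k\tau+t_{j}}^{-k\tau} - \hat{Y}_{-k\tau+t_{j}}^{-k\tau}$, $\Delta f_{j} := f(t_{j},\hat{X}_{-k\tau+t_{j}}^{-k\tau}) - f(t_{j},\hat{Y}_{-k\tau+t_{j}}^{-k\tau})$, and define $\Delta g_{j}$ analogously. Subtracting the two copies of \eqref{eq:BEM} yields
\begin{equation*}
e_{j+1} - \theta\Delta\bigl(-Ae_{j+1} + \Delta f_{j+1}\bigr)
=
e_{j} + (1-\theta)\Delta\bigl(-Ae_{j} + \Delta f_{j}\bigr)
+ \Delta g_{j}\,\Delta W_{-k\tau + t_{j}}.
\end{equation*}
Squaring both sides, taking expectations, and exploiting the $\F_{-k\tau+t_{j}}$-measurability of $e_{j}$ together with the It\^o isometry (exactly as in \eqref{eq:hatXerror}) isolates an additive $\Delta\,\E[|\Delta g_{j}|^{2}]$ contribution on the right-hand side.

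Next I expand the cross terms. For the left-hand side, \eqref{eq:Ainequality} combined with \eqref{eq:fgmonotonicity} gives
\begin{equation*}
2\theta\Delta\,\E\bigl\langle e_{j+1},\,Ae_{j+1}-\Delta f_{j+1}\bigr\rangle
\geq
2(\lambda-L_{f})\theta\Delta\,\E|e_{j+1}|^{2}
+
2(p^{*}-1)\theta\Delta\,\E|\Delta g_{j+1}|^{2},
\end{equation*}
and the symmetric manipulation on the right-hand side yields
\begin{equation*}
2(1-\theta)\Delta\,\E\bigl\langle e_{j},\,-Ae_{j}+\Delta f_{j}\bigr\rangle
\leq
-2(\lambda-L_{f})(1-\theta)\Delta\,\E|e_{j}|^{2}
-
2(p^{*}-1)(1-\theta)\Delta\,\E|\Delta g_{j}|^{2}.
\end{equation*}
Defining
\begin{equation*}
\Phi_{j} := \bigl(1+2(\lambda-L_{f})\theta\Delta\bigr)\,\E|e_{j}|^{2}
+ 2(p^{*}-1)\theta\Delta\,\E|\Delta g_{j}|^{2}
+ \theta^{2}\Delta^{2}\,\E|Ae_{j}-\Delta f_{j}|^{2},
\end{equation*}
and collecting terms, the three entries of the maximum in \eqref{eq:CDelta} arise precisely as the worst decay ratios among the three components of $\Phi_{j}$ (for the $e$-term one uses the coarse bound $1-2(\lambda-L_{f})(1-\theta)\Delta\leq 1$, for the $\Delta g$-term the inequality $1-2(p^{*}-1)(1-\theta)\leq 2(p^{*}-1)\theta-(p^{*}-2)$ valid when $p^{*}\geq 1$, and for the third term the identity $(1-\theta)^{2}/\theta^{2}=1-(2\theta-1)/\theta^{2}$), producing the one-step contraction $\Phi_{j+1}\leq C_{\Delta}\Phi_{j}$. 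This is essentially the same algebraic bookkeeping as in Lemma \ref{lem:numbound}, with $\widetilde{L_{f}}$ replaced by $L_{f}$ and the affine constants absent, and the conditions $\theta\in(1/2,1]$ and $p^{*}>2$ are exactly what keep $C_{\Delta}$ strictly below one.

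Iterating yields $\Phi_{j}\leq C_{\Delta}^{\,j}\Phi_{0}$ and hence $\E|e_{j}|^{2}\leq CC_{\Delta}^{\,j}\Phi_{0}$, so it remains to show $\Phi_{0}\leq C\bigl(\E|\xi-\eta|^{2}+(\E|\xi-\eta|^{2\gamma})^{1/\gamma}\bigr)$. The first component is just $\E|\xi-\eta|^{2}$. For $\E|\Delta g_{0}|^{2}$, applying \eqref{eq:gpolynomial} at $t=s=0$ gives $\E|\Delta g_{0}|^{2}\leq C\,\E[(1+|\xi|+|\eta|)^{\gamma-1}|\xi-\eta|^{2}]$; H\"older with exponents $\gamma/(\gamma-1)$ and $\gamma$, together with the uniform $L^{p^{*}}$-bound on $\xi,\eta$ from (A4) (valid since $\gamma<p^{*}$), upgrades this to $C(\E|\xi-\eta|^{2\gamma})^{1/\gamma}$. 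The same H\"older argument applied to \eqref{eq:fpolynomial} controls $\E|\Delta f_{0}|^{2}$, and $|A(\xi-\eta)|^{2}\leq C|\xi-\eta|^{2}$ handles the matrix term trivially, so the third component of $\Phi_{0}$ is bounded by $C\bigl(\E|\xi-\eta|^{2}+(\E|\xi-\eta|^{2\gamma})^{1/\gamma}\bigr)$ as well. This gives \eqref{lem:numrob:rs}.

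The main obstacle is the three-component contraction step: the recursion must hold simultaneously for all three quantities $\E|e|^{2}$, $\E|\Delta g|^{2}$ and $\E|Ae-\Delta f|^{2}$, which is precisely why $C_{\Delta}$ in \eqref{eq:CDelta} is defined as a maximum over three ratios, and why both structural assumptions $\theta>1/2$ (ensuring $(2\theta-1)/\theta^{2}>0$) and $p^{*}>2$ (ensuring $(p^{*}-2)/(2(p^{*}-1)\theta)>0$) are indispensable. Once that bookkeeping is set up correctly, the rest is routine polynomial-growth control under Assumption \ref{asm:L}.
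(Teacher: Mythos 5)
Your proposal is correct and follows essentially the same route as the paper's proof: subtract the two ST recursions, kill the Brownian cross terms via measurability/It\^o isometry as in \eqref{eq:hatXerror}, apply \eqref{eq:Ainequality} and \eqref{eq:fgmonotonicity} to both sides, contract the same three-component quantity $\Phi_{j}$ (whose worst-case decay ratios are exactly the three entries of $C_{\Delta}$), and bound $\Phi_{0}$ by H\"older with exponents $\gamma/(\gamma-1)$ and $\gamma$ together with the moment bounds from (A4). The only cosmetic difference is that you name the Lyapunov functional $\Phi_{j}$ explicitly, while the paper carries the three terms along inside the displayed inequalities.
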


\begin{proof}
To simplify the notation, we denote
$E_{-k\tau+t_{j}}^{-k\tau}:=\hat{X}_{-k\tau+t_{j}}^{-k\tau} -\hat{Y}_{-k\tau+t_{j}}^{-k\tau}$ and apply \eqref{eq:BEM} as well as the arguments used in \eqref{eq:hatXerror} to get
\begin{align}\label{lem:numrob:1}
	\nonumber
	&~\E\big[| E_{-k\tau+t_{j+1}}^{-k\tau}
	- \theta\Delta ( -A E_{-k\tau+t_{j+1}}^{-k\tau}
	+ f(t_{j+1}, \hat{X}_{-k\tau+t_{j+1}}^{-k\tau})
	-f(t_{j+1}, \hat{Y}_{-k\tau+t_{j+1}}^{-k\tau}))|^{2}\big]
	\nonumber
	\\=&~\nonumber
	\E\big[| E_{-k\tau+t_{j}}^{-k\tau}
	+ (1-\theta)\Delta ( -A E_{-k\tau+t_{j}}^{-k\tau}
	+ f(t_{j}, \hat{X}_{-k\tau+t_{j}}^{-k\tau})
	-f(t_{j}, \hat{Y}_{-k\tau+t_{j}}^{-k\tau}))
	\\&~+
	(g(t_{j}, \hat{X}_{-k\tau+t_{j}}^{-k\tau})
	-g(t_{j}, \hat{Y}_{-k\tau+t_{j}}^{-k\tau}))
	{\Delta W_{-k\tau+t_{j}}} |^{2}\big]
	\\=&~\nonumber
	\E\big[| E_{-k\tau+t_{j}}^{-k\tau}
	+ (1-\theta)\Delta ( -A E_{-k\tau+t_{j}}^{-k\tau}
	+ f(t_{j}, \hat{X}_{-k\tau+t_{j}}^{-k\tau})
	-f(t_{j}, \hat{Y}_{-k\tau+t_{j}}^{-k\tau}))|^{2}\big]
	\\&~+\nonumber
	\Delta\E\big[|g(t_{j}, \hat{X}_{-k\tau+t_{j}}^{-k\tau})
	-g(t_{j}, \hat{Y}_{-k\tau+t_{j}}^{-k\tau})|^{2}\big].
\end{align}
It follows from \eqref{eq:Ainequality} and {\eqref{eq:fgmonotonicity}} that
\begin{align}\label{eq:410410}
	\notag
	&~\E\big[| E_{-k\tau+t_{j+1}}^{-k\tau}
	- \theta\Delta ( -A E_{-k\tau+t_{j+1}}^{-k\tau}
	+ f(t_{j+1}, \hat{X}_{-k\tau+t_{j+1}}^{-k\tau})
	-f(t_{j+1}, \hat{Y}_{-k\tau+t_{j+1}}^{-k\tau}))|^{2}\big]
	\\=&~\notag
	\E\big[| E_{-k\tau+t_{j+1}}^{-k\tau}|^{2}\big]
	+ \theta^{2}\Delta^{2} \E\big[|
	-A E_{-k\tau+t_{j+1}}^{-k\tau}
	+ f(t_{j+1}, \hat{X}_{-k\tau+t_{j+1}}^{-k\tau})
	-f(t_{j+1}, \hat{Y}_{-k\tau+t_{j+1}}^{-k\tau})|^{2}\big]
	\\&~
	-2\theta\Delta \E\big[\big\langle
	E_{-k\tau+t_{j+1}}^{-k\tau},
	-A E_{-k\tau+t_{j+1}}^{-k\tau}
	+ f(t_{j+1}, \hat{X}_{-k\tau+t_{j+1}}^{-k\tau})
	-f(t_{j+1}, \hat{Y}_{-k\tau+t_{j+1}}^{-k\tau})
	\big\rangle\big]
	\\\geq&~\notag
	\big(1+2(\lambda-L_{f})\theta\Delta\big)
	\E\big[| E_{-k\tau+t_{j+1}}^{-k\tau}|^{2}\big]
	\\&~+ \notag
	\theta^{2}\Delta^{2} \E\big[|
	-A E_{-k\tau+t_{j+1}}^{-k\tau}
	+ f(t_{j+1}, \hat{X}_{-k\tau+t_{j+1}}^{-k\tau})
	-f(t_{j+1}, \hat{Y}_{-k\tau+t_{j+1}}^{-k\tau})|^{2}\big]
	\\&~+\notag
	2(p^{*}-1)\theta\Delta\E\big[|
	g(t_{j+1}, \hat{X}_{-k\tau+t_{j+1}}^{-k\tau})
	-g(t_{j+1}, \hat{Y}_{-k\tau+t_{j+1}}^{-k\tau})|^{2}\big].
\end{align}
Similarly, we have
\begin{align}\label{eq:411411}
	\notag
	&~\E\big[| E_{-k\tau+t_{j}}^{-k\tau}
	+ (1-\theta)\Delta ( -A E_{-k\tau+t_{j}}^{-k\tau}
	+ f(t_{j}, \hat{X}_{-k\tau+t_{j}}^{-k\tau})
	- f(t_{j}, \hat{Y}_{-k\tau+t_{j}}^{-k\tau}))|^{2}\big]
	\\=&~\notag
	\E\big[| E_{-k\tau+t_{j}}^{-k\tau}|^{2}\big]
	+
	(1-\theta)^{2}\Delta^{2}
	\E\big[|-A E_{-k\tau+t_{j}}^{-k\tau}
	+ f(t_{j}, \hat{X}_{-k\tau+t_{j}}^{-k\tau})
	- f(t_{j}, \hat{Y}_{-k\tau+t_{j}}^{-k\tau})|^{2}\big]
	\\&~+\notag
	2(1-\theta)\Delta\E\big[\big\langle
	E_{-k\tau+t_{j}}^{-k\tau},
	-A E_{-k\tau+t_{j}}^{-k\tau}
	+ f(t_{j}, \hat{X}_{-k\tau+t_{j}}^{-k\tau})
	- f(t_{j}, \hat{Y}_{-k\tau+t_{j}}^{-k\tau})
	\big\rangle\big]
	\\\leq&~
	\big(1-2(\lambda-L_{f})(1-\theta)\Delta\big)
	\E\big[| E_{-k\tau+t_{j}}^{-k\tau}|^{2}\big]
	\\&~+\notag
	(1-\theta)^{2}\Delta^{2}
	\E\big[|-A E_{-k\tau+t_{j}}^{-k\tau}
	+ f(t_{j}, \hat{X}_{-k\tau+t_{j}}^{-k\tau})
	- f(t_{j}, \hat{Y}_{-k\tau+t_{j}}^{-k\tau})|^{2}\big]
	\\&~\notag
	-
	2(p^{*}-1)(1-\theta)\Delta
	\E\big[|g(t_{j}, \hat{X}_{-k\tau+t_{j}}^{-k\tau})
	- g(t_{j}, \hat{Y}_{-k\tau+t_{j}}^{-k\tau})|^{2}\big].
\end{align}
Inserting \eqref{eq:410410} and \eqref{eq:411411} into \eqref{lem:numrob:1} results in
\begin{align*}
	&~\big(1+2(\lambda-L_{f})\theta\Delta\big)
	\E\big[| E_{-k\tau+t_{j+1}}^{-k\tau}|^{2}\big]
	\\&~+
	\theta^{2}\Delta^{2} \E\big[|
	-A E_{-k\tau+t_{j+1}}^{-k\tau}
	+ f(t_{j+1}, \hat{X}_{-k\tau+t_{j+1}}^{-k\tau})
	-f(t_{j+1}, \hat{Y}_{-k\tau+t_{j+1}}^{-k\tau})|^{2}\big]
	\\&~+2(p^{*}-1)\theta\Delta\E\big[|
	g(t_{j+1}, \hat{X}_{-k\tau+t_{j+1}}^{-k\tau})
	-g(t_{j+1}, \hat{Y}_{-k\tau+t_{j+1}}^{-k\tau})|^{2}\big]
	\\\leq&~
	\big(1-2(\lambda-L_{f})(1-\theta)\Delta\big)
	\E\big[| E_{-k\tau+t_{j}}^{-k\tau}|^{2}\big]
	\\&~+
	(1-\theta)^{2}\Delta^{2}
	\E\big[|-A E_{-k\tau+t_{j}}^{-k\tau}
	+ f(t_{j}, \hat{X}_{-k\tau+t_{j}}^{-k\tau})
	- f(t_{j}, \hat{Y}_{-k\tau+t_{j}}^{-k\tau})|^{2}\big]
	\\&~
	+\big(1-2(p^{*}-1)(1-\theta)\big)\Delta
	\E\big[|g(t_{j}, \hat{X}_{-k\tau+t_{j}}^{-k\tau})
	- g(t_{j}, \hat{Y}_{-k\tau+t_{j}}^{-k\tau})|^{2}\big]
	\\\leq&~
	\big(1+2(\lambda-L_{f})\theta\Delta
	-2(\lambda-L_{f})\theta\Delta\big)
	\E\big[| E_{-k\tau+t_{j}}^{-k\tau}|^{2}\big]
	\\&~+
	(\theta^{2}-2\theta+1) \Delta^{2}
	\E\big[|-A E_{-k\tau+t_{j}}^{-k\tau}
	+ f(t_{j}, \hat{X}_{-k\tau+t_{j}}^{-k\tau})
	- f(t_{j}, \hat{Y}_{-k\tau+t_{j}}^{-k\tau})|^{2}\big]
	\\&~
	+\big(2(p^{*}-1)\theta-(p^{*}-2)\big)\Delta
	\E\big[|g(t_{j}, \hat{X}_{-k\tau+t_{j}}^{-k\tau})
	- g(t_{j}, \hat{Y}_{-k\tau+t_{j}}^{-k\tau})|^{2}\big],
\end{align*}
which together with \eqref{eq:CDelta} implies
\begin{align*}
	&~\big(1+2(\lambda-L_{f})\theta\Delta\big)
	\E\big[| E_{-k\tau+t_{j+1}}^{-k\tau}|^{2}\big]
	\\&~+
	\theta^{2}\Delta^{2} \E\big[|
	-A E_{-k\tau+t_{j+1}}^{-k\tau}
	+ f(t_{j+1}, \hat{X}_{-k\tau+t_{j+1}}^{-k\tau})
	-f(t_{j+1}, \hat{Y}_{-k\tau+t_{j+1}}^{-k\tau})|^{2}\big]
	\\&~+2(p^{*}-1)\theta\Delta\E\big[|
	g(t_{j+1}, \hat{X}_{-k\tau+t_{j+1}}^{-k\tau})
	-g(t_{j+1}, \hat{Y}_{-k\tau+t_{j+1}}^{-k\tau})|^{2}\big]
	\\\leq&~
	C_{\Delta} \Big(
	\big(1+2(\lambda-L_{f})\theta\Delta\big)
	\E\big[| E_{-k\tau+t_{j}}^{-k\tau}|^{2}\big]
	\\&~+
	\theta^{2} \Delta^{2}
	\E\big[|-A E_{-k\tau+t_{j}}^{-k\tau}
	+ f(t_{j}, \hat{X}_{-k\tau+t_{j}}^{-k\tau})
	- f(t_{j}, \hat{Y}_{-k\tau+t_{j}}^{-k\tau})|^{2}\big]
	\\&~
	+2(p^{*}-1)\theta\Delta
	\E\big[|g(t_{j}, \hat{X}_{-k\tau+t_{j}}^{-k\tau})
	- g(t_{j}, \hat{Y}_{-k\tau+t_{j}}^{-k\tau})|^{2}\big]\Big).
\end{align*}

By \eqref{eq:fpolynomial}, \eqref{eq:gpolynomial} and the H\"{o}lder inequality, one gets
\begin{align}
	&~\E\big[| E_{-k\tau+t_{j}}^{-k\tau}|^{2}\big] \notag
	\\\leq&~\notag
	C_{\Delta}^{j} \Big(
	\big(1+2(\lambda-L_{f})\theta\Delta\big)
	\E\big[|\xi-\eta|^{2}\big]
	+
	2(p^{*}-1)\theta\Delta
	\E\big[|g(0, \xi) - g(0, \eta)|^{2}
	\\&~+\notag
	\theta^{2} \Delta^{2}
	\E\big[|-A (\xi-\eta) + f(0, \xi)
	- f(0, \eta)|^{2}\big]\Big)
	\\\leq&~\notag
	CC_{\Delta}^{j}\big(\E\big[|\xi-\eta|^{2}\big]
	+
	\E\big[|f(0, \xi) - f(0, \eta)|^{2}\big]
	+
	\E\big[|g(0, \xi) - g(0, \eta)|^{2}\big]\big)
	\\\leq&~\notag
	CC_{\Delta}^{j}\big(\E\big[|\xi-\eta|^{2}\big]
	+
	\E\big[(1+|\xi|+|\eta|)^{2\gamma-2} |\xi-\eta|^{2}\big]
	+
	\E\big[(1+|\xi|+|\eta|)^{\gamma-1} |\xi-\eta|^{2}\big]\big)
	\\\leq&~\notag
	CC_{\Delta}^{j}
	\Big(\E\big[|\xi-\eta|^{2}\big]
	+
	\big(\E\big[(1+|\xi|+|\eta|)^{2\gamma}\big]
	\big)^{\frac{\gamma-1}{\gamma}}
	\big(\E\big[|\xi-\eta|^{2\gamma}\big]\big)^{\frac{1}{\gamma}}
	\\&~+\notag
	\big(\E\big[(1+|\xi|+|\eta|)^{\gamma}\big]
	\big)^{\frac{\gamma-1}{\gamma}}
	\big(\E\big[|\xi-\eta|^{2\gamma}\big]
	\big)^{\frac{1}{\gamma}}\Big),
	\\\leq&~\notag
	CC_{\Delta}^{j}
	\Big(\E\big[|\xi-\eta|^{2}\big]
	+
	\big(\E\big[|\xi-\eta|^{2\gamma}\big]
	\big)^{\frac{1}{\gamma}}\Big),
\end{align}
which indicates the required result and therefore completes the proof.
\end{proof}

Following the arguments used in \cite[Theorem 3.4]{feng2017numerical}, \cite[Theorem 2.1]{rong2020numerical} and \cite[Theorem 8]{wu2023backward}, we obtain the existence and uniqueness of random periodic solutions for the ST methods.

\begin{theorem}\label{thm:nump}
	Suppose that Assumption \ref{asm:L} holds. Then for any initial value $\xi$ satisfying \rm{(A4)}, the considered method \eqref{eq:BEM} admits a unique random period solution $\hat{X}_{t}^{*} \in L^{2}(\Omega;\R^{d}),t \geq 0$ such that
	\begin{equation}\label{thm:ep:rsrere}
		\lim_{k \to \infty}
		|\hat{X}_{t}^{-k\tau}
		- \hat{X}_{t}^{*}|_{L^{2}(\Omega;\R^{d})}
		=
		0.
	\end{equation}
\end{theorem}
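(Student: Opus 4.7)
The plan is to construct $\hat{X}_{t}^{*}$ at each temporal grid point as the $L^{2}(\Omega;\R^{d})$-limit of the pullback iterates $\{\hat{X}_{t}^{-k\tau}(\xi)\}_{k \in \N}$ and then to verify the random periodicity and uniqueness, following the strategies in \cite{feng2017numerical, rong2020numerical, wu2023backward}. The central mechanism driving convergence is the geometric contraction $C_{\Delta} \in [0,1)$ furnished by Lemma \ref{lem:numrob}. I would assume, as is standard in this setting, that $\tau/\Delta \in \N$ so that the period is resolved exactly by the grid.

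First I would fix $t = t_{j}$ with $j \in \N$ and show that $\{\hat{X}_{t_{j}}^{-k\tau}(\xi)\}_{k \in \N}$ is Cauchy in $L^{2}(\Omega;\R^{d})$. Exploiting the discrete semiflow property of \eqref{eq:BEM}, which holds by construction since each update is a deterministic function of the current state, time and Brownian increment, for $k_{2} > k_{1}$ one can write
\begin{align*}
      \hat{X}_{t_{j}}^{-k_{2}\tau}(\xi)
      =
      \hat{X}_{t_{j}}^{-k_{1}\tau}\big(\hat{X}_{-k_{1}\tau}^{-k_{2}\tau}(\xi)\big).
\end{align*}
Applying Lemma \ref{lem:numrob} on the interval starting at $-k_{1}\tau$ with initial data $\xi$ and $\eta := \hat{X}_{-k_{1}\tau}^{-k_{2}\tau}(\xi)$ yields an estimate proportional to $C_{\Delta}^{j + k_{1}\tau/\Delta}$ multiplying $\E[|\xi-\eta|^{2}] + (\E[|\xi-\eta|^{2\gamma}])^{1/\gamma}$. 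Uniform-in-$k$ moment bounds on $\eta$ together with (A4) render this bracket uniformly bounded, and since $C_{\Delta} < 1$ the right-hand side tends to $0$ as $k_{1} \to \infty$ uniformly in $k_{2} > k_{1}$. The resulting $L^{2}$-limit is $\hat{X}_{t_{j}}^{*}$, and the definition is extended to general $t$ by the natural interpolation used in \cite{feng2017numerical}.

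For random periodicity I would combine the time-periodicity in Assumption \ref{asm:L}(A2) with the shift invariance $\Delta W_{t_{j}+\tau}(\omega) = \Delta W_{t_{j}}(\Theta_{\tau}\omega)$ of the Wiener increments. A direct induction on \eqref{eq:BEM} gives $\hat{X}_{t_{j}+\tau}^{-k\tau}(\omega,\xi) = \hat{X}_{t_{j}}^{-(k-1)\tau}(\Theta_{\tau}\omega,\xi)$ for every $k \geq 1$, and passing to the limit $k \to \infty$ transfers this identity to $\hat{X}_{t_{j}+\tau}^{*}(\omega) = \hat{X}_{t_{j}}^{*}(\Theta_{\tau}\omega)$. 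Uniqueness follows from the same contraction: for any second $L^{2}$-random periodic solution $\hat{Y}^{*}$, applying Lemma \ref{lem:numrob} to $\hat{X}^{*}$ and $\hat{Y}^{*}$ between $-k\tau$ and $t_{j}$ forces $\E[|\hat{X}_{t_{j}}^{*} - \hat{Y}_{t_{j}}^{*}|^{2}] \leq C\,C_{\Delta}^{j + k\tau/\Delta} \to 0$ as $k \to \infty$.

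The main obstacle I anticipate is the $(\E[|\xi-\eta|^{2\gamma}])^{1/\gamma}$ term in Lemma \ref{lem:numrob}, which demands a uniform-in-$k$ $L^{2\gamma}$ moment bound on $\hat{X}_{-k_{1}\tau}^{-k_{2}\tau}(\xi)$, whereas Lemma \ref{lem:numbound} is stated only for the second moment. This calls for a higher-moment analogue of Lemma \ref{lem:numbound}, obtained by redoing the iterative dissipative computation with $|\cdot|^{2}$ replaced by $|\cdot|^{p}$ for $p \in [2,p^{*}]$; the threshold $p^{*} > (d+4)\gamma \geq 2\gamma$ in Assumption \ref{asm:L}(A3) ensures $p = 2\gamma$ is admissible, and the weighted Young inequality used to derive \eqref{eq:monocondition} supplies the dissipativity margin needed at the discrete level.
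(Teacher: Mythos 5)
Your proposal is correct and follows essentially the same route as the paper, which gives no detailed proof at all but defers to the arguments of \cite[Theorem 3.4]{feng2017numerical}, \cite[Theorem 2.1]{rong2020numerical} and \cite[Theorem 8]{wu2023backward}: the pullback sequence is shown to be Cauchy in $L^{2}(\Omega;\R^{d})$ by combining the discrete semiflow property with the contraction $C_{\Delta}\in[0,1)$ of Lemma \ref{lem:numrob} and uniform moment bounds, periodicity is transferred through the Wiener shift, and uniqueness follows from the same contraction. Two remarks. First, your shift identity has an index slip: the scheme started at $-k\tau$ and evaluated at $t_{j}+\tau$ matches the scheme started at $-(k+1)\tau$ (not $-(k-1)\tau$) evaluated at $t_{j}$ under the shifted noise, i.e. $\hat{X}_{t_{j}+\tau}^{-k\tau}(\omega,\xi)=\hat{X}_{t_{j}}^{-(k+1)\tau}(\Theta_{\tau}\omega,\xi)$ (compare the number of steps on each side: your version equates a $(k\tau+t_{j}+\tau)/\Delta$-step evolution with a $((k-1)\tau+t_{j})/\Delta$-step one); this is harmless, since letting $k\to\infty$ yields the same limiting identity $\hat{X}_{t_{j}+\tau}^{*}(\omega)=\hat{X}_{t_{j}}^{*}(\Theta_{\tau}\omega)$. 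Second, your observation that invoking Lemma \ref{lem:numrob} with $\eta=\hat{X}_{-k_{1}\tau}^{-k_{2}\tau}(\xi)$ requires a uniform-in-$k$ bound on $\E\big[|\hat{X}_{-k_{1}\tau}^{-k_{2}\tau}(\xi)|^{2\gamma}\big]$, which the second-moment Lemma \ref{lem:numbound} does not supply, identifies a genuine gap in the paper's own presentation, and your proposed fix (a $p$-th moment analogue of Lemma \ref{lem:numbound} for $p=2\gamma$, admissible because $p^{*}>(d+4)\gamma\geq 2\gamma$) is the right way to close it; be aware, though, that for the implicit theta scheme the case $p>2$ is more delicate than the phrase ``redoing the iterative dissipative computation'' suggests, because after raising the defining identity to the power $p/2$ the martingale cross terms no longer vanish under plain expectation and must be controlled via conditional expectations and Young-type estimates.
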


\section{Mean square convergence order of ST methods}
\label{eq:convergence}
In this section, we will establish the mean square convergence order of ST methods. To this end, let us define
\begin{align}\label{eq:errorRkj}
	\mathcal{R}_{k,j}
	:= &~\notag
	\theta \int_{-k\tau+t_{j}}^{-k\tau+t_{j+1}}
	(-A) \big( X_{s}^{-k\tau}
	-X_{-k\tau+t_{j+1}}^{-k\tau} \big)
	+
	\big(f(s,X_{s}^{-k\tau})
	-f(t_{j+1},X_{-k\tau+t_{j+1}}^{-k\tau})\big)\diff{s}
	\\&~
	+(1-\theta) \int_{-k\tau+t_{j}}^{-k\tau+t_{j+1}}
	(-A)\big(X_{s}^{-k\tau}-X_{-k\tau+t_{j}}^{-k\tau}\big)
	+
	\big(f(s,X_{s}^{-k\tau})
	-f(t_{j},X_{-k\tau+t_{j}}^{-k\tau})\big)\diff{s}
	\\&~\notag
	+\int_{-k\tau+t_{j}}^{-k\tau+t_{j+1}}
	g(s,X_{s}^{-k\tau})-
	g(t_{j},X_{-k\tau+t_{j}}^{-k\tau})\diff{W_{s}}
\end{align}
for all $k, j \in \N$. The following result provides uniform bounded estimates for the second moment of $\mathcal{R}_{k,j}$ and its conditional expectation $\E(\mathcal{R}_{k,j} ~|~\F_{-k\tau+t_{j}})$.

\begin{lemma}\label{eq:RjRjjFjestimate}
	Suppose that Assumption \ref{asm:L} holds. Then for any $k, j \in \N$, there exists a constant $C > 0$, independent of $k,j$, such that
	\begin{align}\label{eq:RjrJF}
		\E\big[|\mathcal{R}_{k,j}|^{2}\big]
		\leq
		C\Delta^{2},
		\quad
		\E\big[|\E(\mathcal{R}_{k,j} ~|~\F_{-k\tau+t_{j}})|^{2}\big]
		\leq
		C\Delta^{3}.
	\end{align}
\end{lemma}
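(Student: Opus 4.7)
The plan is to split $\mathcal{R}_{k,j}$ into the three pieces appearing in \eqref{eq:errorRkj}: a $\theta$-weighted deterministic integral $I_1$, a $(1-\theta)$-weighted deterministic integral $I_2$, and the stochastic integral $I_3$. Using $|a+b+c|^2 \leq 3(|a|^2+|b|^2+|c|^2)$, I would estimate each piece separately, and then re-use these pointwise bounds to handle the conditional expectation.

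For $I_3$ I would apply It\^{o}'s isometry to obtain
\[
\E\big[|I_3|^2\big] = \int_{-k\tau+t_j}^{-k\tau+t_{j+1}} \E\big[|g(s,X_s^{-k\tau})-g(t_j,X_{-k\tau+t_j}^{-k\tau})|^2\big]\diff{s},
\]
and then bound the integrand by means of \eqref{eq:gpolynomial}. Cauchy--Schwarz peels the superlinear polynomial factor $(1+|X_s^{-k\tau}|+|X_{-k\tau+t_j}^{-k\tau}|)^{\gamma-1}$ (controlled by Lemma \ref{lem:exactbound}, since $2(\gamma-1) \leq p^*$) away from $\E[|X_s^{-k\tau}-X_{-k\tau+t_j}^{-k\tau}|^4]$, which is of order $\Delta^2$ by Lemma \ref{lem:exactholder} with $p=4$ (this is admissible because $p^{*}>(d+4)\gamma \geq 5\gamma$ gives $4 \leq p^{*}/\gamma$). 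The time-increment term in \eqref{eq:gpolynomial} is handled directly with Lemma \ref{lem:exactbound}. Both contributions yield $\E[|g(s,X_s^{-k\tau})-g(t_j,X_{-k\tau+t_j}^{-k\tau})|^2] \leq C\Delta$, so integration over an interval of length $\Delta$ gives $\E[|I_3|^2]\leq C\Delta^2$.

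For the deterministic parts I would use the Cauchy--Schwarz trick
\[
\Big|\int_{-k\tau+t_j}^{-k\tau+t_{j+1}} h(s)\diff{s}\Big|^2 \leq \Delta \int_{-k\tau+t_j}^{-k\tau+t_{j+1}} |h(s)|^2 \diff{s},
\]
which gains an extra factor of $\Delta$ relative to the It\^{o} estimate. The integrand in $I_2$ is $-A(X_s^{-k\tau}-X_{-k\tau+t_j}^{-k\tau}) + f(s,X_s^{-k\tau})-f(t_j,X_{-k\tau+t_j}^{-k\tau})$; its squared $L^2$-norm is $O(\Delta)$ by \eqref{eq:Ainequality}, \eqref{eq:fpolynomial}, Lemma \ref{lem:exactbound} and the fourth-moment H\"{o}lder estimate of Lemma \ref{lem:exactholder}, exactly as for $I_3$. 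An identical argument, with $t_{j+1}$ and $X_{-k\tau+t_{j+1}}^{-k\tau}$ in place of $t_j$ and $X_{-k\tau+t_j}^{-k\tau}$, handles $I_1$. Combining gives $\E[|I_\ell|^2] \leq C\Delta^{3}$ for $\ell=1,2$, so the three pieces together yield the first bound $\E[|\mathcal{R}_{k,j}|^2]\leq C\Delta^2$, which is dominated by the stochastic term.

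For the conditional expectation I would exploit that the integrand of $I_3$ is $(\F_s)$-adapted on $[-k\tau+t_j,-k\tau+t_{j+1}]$, so $\E[I_3 \mid \F_{-k\tau+t_j}]=0$. Hence $\E[\mathcal{R}_{k,j}\mid\F_{-k\tau+t_j}] = \E[I_1+I_2\mid\F_{-k\tau+t_j}]$, and conditional Jensen together with the deterministic estimate above yields $\E[|\E[\mathcal{R}_{k,j}\mid\F_{-k\tau+t_j}]|^2]\leq 2\sum_{\ell=1}^2 \E[|I_\ell|^2]\leq C\Delta^3$. The main subtlety is not conceptual but bookkeeping: one must carefully separate the superlinear moment factors from the mean-square increments via Cauchy--Schwarz and verify that every exponent that appears lies within $[2,p^*]$ for Lemma \ref{lem:exactbound} and within $[2,p^*/\gamma]$ for Lemma \ref{lem:exactholder}. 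The condition $p^{*}>(d+4)\gamma$ comfortably provides enough moments, so no delicate balancing is required.
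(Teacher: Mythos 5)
Your proposal is correct and follows essentially the same route as the paper's proof: the same three-term decomposition, Cauchy--Schwarz in time for the deterministic integrals, It\^{o} isometry for the stochastic one, the increment/moment estimates via Lemmas \ref{lem:exactbound} and \ref{lem:exactholder}, and the vanishing conditional expectation of the martingale part plus conditional Jensen for the $\Delta^{3}$ bound. The only cosmetic difference is that you separate the polynomial factors from the increments by symmetric Cauchy--Schwarz (needing fourth moments of increments, i.e. $4\leq p^{*}/\gamma$), while the paper uses an asymmetric H\"{o}lder pairing with exponents $\tfrac{4\gamma-2}{2(\gamma-1)}$ and $\tfrac{4\gamma-2}{2\gamma}$; both are covered by $p^{*}>(d+4)\gamma$.
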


\begin{proof}
By the H\"{o}lder inequality and the It\^{o} isometry, we obtain
\begin{align}\label{eq:Rjestimate}
	\E\big[|\mathcal{R}_{k,j}|^{2}\big]
	\leq&~\notag
	3\theta^{2} \E\bigg[\Big|\int_{-k\tau+t_{j}}^{-k\tau+t_{j+1}}
	(-A) \big( X_{s}^{-k\tau}
	-X_{-k\tau+t_{j+1}}^{-k\tau} \big)
	\\&~+\notag
	\big(f(s,X_{s}^{-k\tau}) - f(t_{j+1},
	X_{-k\tau+t_{j+1}}^{-k\tau})\big)\diff{s}\Big|^{2}\bigg]
	\\&~\notag
	+3(1-\theta)^{2}
	\E\bigg[\Big|\int_{-k\tau+t_{j}}^{-k\tau+t_{j+1}}
	(-A)\big(X_{s}^{-k\tau}-X_{-k\tau+t_{j}}^{-k\tau}\big)
	\\&~+\notag
	\big(f(s,X_{s}^{-k\tau})
	-f(t_{j},X_{-k\tau+t_{j}}^{-k\tau})\big)\diff{s}\Big|^{2}\bigg]
	\\&~
	+3\E\bigg[\Big|\int_{-k\tau+t_{j}}^{-k\tau+t_{j+1}}
	g(s,X_{s}^{-k\tau})-
	g(t_{j},X_{-k\tau+t_{j}}^{-k\tau})\diff{W_{s}}\Big|^{2}\bigg]
	\\\leq&~\notag
	C\Delta\int_{-k\tau+t_{j}}^{-k\tau+t_{j+1}}
	\E\big[|(-A)(X_{s}^{-k\tau}
	-X_{-k\tau+t_{j+1}}^{-k\tau})|^{2}\big]
	\\&~+\notag
	\E\big[|f(s,X_{s}^{-k\tau}) - f(t_{j+1},
	X_{-k\tau+t_{j+1}}^{-k\tau})|^{2}\big]\diff{s}
	\\&~\notag
	+C\Delta\int_{-k\tau+t_{j}}^{-k\tau+t_{j+1}}
	\E\big[|(-A)(X_{s}^{-k\tau}
	-X_{-k\tau+t_{j}}^{-k\tau})|^{2}\big]
	\\&~+\notag
	\E\big[|f(s,X_{s}^{-k\tau})
	-f(t_{j},X_{-k\tau+t_{j}}^{-k\tau})|^{2}\big]\diff{s}
	\\&~\notag
	+C\int_{-k\tau+t_{j}}^{-k\tau+t_{j+1}}
	\E\big[|g(s,X_{s}^{-k\tau})-
	g(t_{j},X_{-k\tau+t_{j}}^{-k\tau})|^{2}\big]\diff{s}.
\end{align}
Now for any $s \in [-k\tau+t_{j},-k\tau+t_{j+1}]$, using \eqref{eq:fpolynomial}, \eqref{eq:gpolynomial},
{Lemmas \ref{lem:exactbound}  and \ref{lem:exactholder}} indicates
\begin{align}\label{eq:fholder}
	&~\notag
	\E\big[|f(s,X_{s}^{-k\tau}) - f(t_{j+1},
	X_{-k\tau+t_{j+1}}^{-k\tau})|^{2}\big]
	\\=&~\notag
	\E\big[|f(s,X_{s}^{-k\tau}) - f(-k\tau+t_{j+1},
	X_{-k\tau+t_{j+1}}^{-k\tau})|^{2}\big]
	\\\leq&~\notag
	C\E\big[\big(1 + |X_{s}^{-k\tau}|
	+ |X_{-k\tau+t_{j+1}}^{-k\tau}|\big)^{2(\gamma-1)}
	|X_{s}^{-k\tau}-X_{-k\tau+t_{j+1}}^{-k\tau}|^{2}\big]
	\\&~+
	C\E\big[\big(1 + |X_{s}^{-k\tau}|
	+ |X_{-k\tau+t_{j+1}}^{-k\tau}|\big)^{2\gamma}
	|s-(-k\tau+t_{j+1})|^{2}\big]
	\\\leq&~\notag
	C\big(\E\big[\big(1 + |X_{s}^{-k\tau}|
	+ |X_{-k\tau+t_{j+1}}^{-k\tau}|\big)^{4\gamma-2}\big]
	\big)^{\frac{2(\gamma-1)}{4\gamma-2}}
	\\&~\times\notag
	\big(\E\big[|X_{s}^{-k\tau}
	-X_{-k\tau+t_{j+1}}^{-k\tau}|^{\frac{4\gamma-2}{\gamma}}\big]
	\big)^{\frac{2\gamma}{4\gamma-2}}
	\\&~+\notag
	C\Delta^{2}\E\big[\big(1 + |X_{s}^{-k\tau}|
	+ |X_{-k\tau+t_{j+1}}^{-k\tau}|\big)^{2\gamma}\big]
	\\\leq& \notag
	C\Delta,
\end{align}
and similarly
\begin{gather}
	\label{eq:ffholder}
	\E\big[|f(s,X_{s}^{-k\tau})
	-f(t_{j},X_{-k\tau+t_{j}}^{-k\tau})|^{2}\big]
	\leq C\Delta,
	\\
	\label{eq:gholder}
	\E\big[|g(s,X_{s}^{-k\tau})-
	g(t_{j},X_{-k\tau+t_{j}}^{-k\tau})|^{2}\big]
	\leq
	C\Delta.
\end{gather}
Plugging \eqref{eq:fholder}, \eqref{eq:ffholder} and \eqref{eq:gholder} into \eqref{eq:Rjestimate} promises the first part of \eqref{eq:RjrJF}. For the second part of \eqref{eq:RjrJF}, we use the Jensen inequality for conditional expectation to get
\begin{align*}
	&~\E\big[|\E(\mathcal{R}_{k,j} ~|~\F_{-k\tau+t_{j}})|^{2}\big]
	\\\leq&~
	2\theta^{2}\E\bigg[\Big|
	\E\Big(\int_{-k\tau+t_{j}}^{-k\tau+t_{j+1}}
	(-A) \big( X_{s}^{-k\tau}
	-X_{-k\tau+t_{j+1}}^{-k\tau} \big)
	\\&~+
	\big(f(s,X_{s}^{-k\tau})
	-f(t_{j+1},X_{-k\tau+t_{j+1}}^{-k\tau})\big)\diff{s}
	~|~\F_{-k\tau+t_{j}}\Big)
	\Big|^{2}\bigg]
	\\&~+
	2(1-\theta)^{2}\E\bigg[\Big|
	\E\Big(\int_{-k\tau+t_{j}}^{-k\tau+t_{j+1}}
	(-A)\big(X_{s}^{-k\tau}-X_{-k\tau+t_{j}}^{-k\tau}\big)
	\\&~+
	\big(f(s,X_{s}^{-k\tau})
	-f(t_{j},X_{-k\tau+t_{j}}^{-k\tau})\big)\diff{s}
	~|~\F_{-k\tau+t_{j}}\Big)
	\Big|^{2}\bigg]
	\\\leq&~
	2\E\bigg[\E\Big(\Big|\int_{-k\tau+t_{j}}^{-k\tau+t_{j+1}}
	(-A) \big( X_{s}^{-k\tau}
	-X_{-k\tau+t_{j+1}}^{-k\tau} \big)
	\\&~+
	\big(f(s,X_{s}^{-k\tau})
	-f(t_{j+1},X_{-k\tau+t_{j+1}}^{-k\tau})
	\big)\diff{s} \Big|^{2}
	~|~\F_{-k\tau+t_{j}}\Big)
	\bigg]
	\\&~+
	2\E\bigg[\E\Big(\Big|\int_{-k\tau+t_{j}}^{-k\tau+t_{j+1}}
	(-A)\big(X_{s}^{-k\tau}-X_{-k\tau+t_{j}}^{-k\tau}\big)
	\\&~+
	\big(f(s,X_{s}^{-k\tau})
	-f(t_{j},X_{-k\tau+t_{j}}^{-k\tau})\big)\diff{s}\Big|^{2}
	~|~\F_{-k\tau+t_{j}}\Big) \bigg]
	\\=&~
	2\E\bigg[\Big|\int_{-k\tau+t_{j}}^{-k\tau+t_{j+1}}
	(-A) \big( X_{s}^{-k\tau}
	-X_{-k\tau+t_{j+1}}^{-k\tau} \big)
	\\&~+
	\big(f(s,X_{s}^{-k\tau})
	-f(t_{j+1},X_{-k\tau+t_{j+1}}^{-k\tau})
	\big)\diff{s}\Big|^{2}\bigg]
	\\&~+
	2\E\bigg[\Big|\int_{-k\tau+t_{j}}^{-k\tau+t_{j+1}}
	(-A)\big(X_{s}^{-k\tau}-X_{-k\tau+t_{j}}^{-k\tau}\big)
	\\&~+
	\big(f(s,X_{s}^{-k\tau})
	-f(t_{j},X_{-k\tau+t_{j}}^{-k\tau})\big)\diff{s}\Big|^{2}\bigg].
\end{align*}
Repeating the techniques used in \eqref{eq:Rjestimate} and exploiting
\eqref{eq:fholder}, \eqref{eq:ffholder} as well as \eqref{eq:gholder} {imply} the second part of \eqref{eq:RjrJF}. Thus we complete the proof.
\end{proof}

It is expected that the orders of estimates with respect to the stepsize $\Delta$ in \eqref{eq:RjrJF} should be higher when SDEs \eqref{eq:SDE} driven by additive noise. We will show that this happens at the price of requiring a differentiable condition on the drift coefficient. Following the idea presented in \cite{guo2023order}, we make the following assumption.

\begin{assumption}\label{asm:Ladditive}
	Suppose that the diffusion coefficient function $g \colon \R \to \R^{d \times m}$ is continuous and periodic in time with period $\tau > 0$, i.e., $g(t+\tau) = g(t)$ for all $t \in \R$. Besides, there exists a constant $C > 0$ such that $\sup\limits_{t \in [0,\tau)}|g(t)| \leq C$ and
	\begin{equation}
		|g(t)-g(s)| \leq C|t-s|, \quad s,t \in [0,\tau).
	\end{equation}
	Moreover, assume that the drift coefficient function $f \colon \R \times \R^{d} \to \R^{d}$ is continuously differentiable, and that there exists a constant $C > 0$ such that
	\begin{align}
		\bigg|\bigg(\frac{\partial{f}(t,y)}{\partial{x}}
		-
		\frac{\partial{f}(t,z)}{\partial{x}}\bigg)u\bigg|
		\leq
		C(1+|y|+|z|)^{\gamma-2}|y-z||u|,
		\quad
		y,z,u \in \R^{d},
	\end{align}
	where $\gamma  \geq 1$ comes from \eqref{eq:fpolynomial},
    and $\frac{\partial f}{\partial x}$ denotes the partial derivative of $f$ with respect to the state variable $x$.
\end{assumption}

Under the above additional assumption, one can repeat the arguments used in \cite[Theorem 4.6]{guo2023order} to improve the estimates in Lemma \ref{eq:RjRjjFjestimate}. The proof of the following lemma is thus omitted.

\begin{lemma}\label{eq:RjRjjFjestimateadditive}
	Suppose that Assumptions \ref{asm:L} and \ref{asm:Ladditive} hold. Then for any $k, j \in \N$, there exists a constant $C > 0$, independent of $k,j$, such that
	\begin{align}\label{eq:RjrJFadditive}
		\E\big[|\mathcal{R}_{k,j}|^{2}\big]
		\leq
		C\Delta^{3},
		\quad
		\E\big[|\E(\mathcal{R}_{k,j} ~|~\F_{-k\tau+t_{j}})|^{2}\big]
		\leq
		C\Delta^{4}.
	\end{align}
\end{lemma}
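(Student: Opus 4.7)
The plan is to exploit two structural simplifications arising in the additive-noise setting: first, the fact that the diffusion coefficient $g(\cdot)$ has no state dependence eliminates state-difference factors in the stochastic integral of $\mathcal{R}_{k,j}$, and second, the $C^1$-regularity of $f$ in space in Assumption \ref{asm:Ladditive} allows a first-order Taylor expansion that isolates the martingale part of $X_s^{-k\tau} - X_{-k\tau+t_j}^{-k\tau}$, which then vanishes in conditional expectation and gives half an extra order in variance.

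First I would treat the stochastic integral appearing in $\mathcal{R}_{k,j}$. Under Assumption \ref{asm:Ladditive} this reduces to $\int_{-k\tau+t_j}^{-k\tau+t_{j+1}}(g(s)-g(t_j))\,\diff W_s$. By the It\^o isometry and the Lipschitz time-regularity $|g(s)-g(t_j)|\le C\Delta$, this term is $O(\Delta^3)$ in $L^2$, and it vanishes under $\E(\cdot\,|\,\F_{-k\tau+t_j})$. So the stochastic part is already compatible with both target estimates.

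Next I would handle the two drift integrals. For the piece $\int (f(s,X_s^{-k\tau}) - f(t_{j+1}, X_{t_{j+1}}^{-k\tau}))\,\diff s$ (and the analogous one at $t_j$), I would split via $f(s,X_s^{-k\tau}) - f(t_{j+1},X_{t_{j+1}}^{-k\tau}) = (f(s,X_s^{-k\tau})-f(s,X_{t_j}^{-k\tau})) + (f(s,X_{t_j}^{-k\tau}) - f(t_{j+1},X_{t_{j+1}}^{-k\tau}))$ and apply a first-order Taylor expansion
\begin{align*}
f(s,X_s^{-k\tau})-f(s,X_{t_j}^{-k\tau})
= \int_0^1 \tfrac{\partial f}{\partial x}\bigl(s, X_{t_j}^{-k\tau} + \alpha(X_s^{-k\tau}-X_{t_j}^{-k\tau})\bigr)\,\diff\alpha \cdot (X_s^{-k\tau}-X_{t_j}^{-k\tau}).
\end{align*}
The crucial decomposition is $X_s^{-k\tau}-X_{t_j}^{-k\tau}=D_s+M_s$, where $D_s:=\int_{t_j}^{s}(-AX_r^{-k\tau}+f(r,X_r^{-k\tau}))\diff r$ has bounded variation with $|D_s|_{L^2}=O(\Delta)$, and $M_s:=\int_{t_j}^{s}g(r)\diff W_r$ is an $\F_{-k\tau+t_j}$-conditional martingale with $|M_s|_{L^2}=O(\Delta^{1/2})$ and $\E(M_s\,|\,\F_{-k\tau+t_j})=0$. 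Together with the linear control on the gradient (Assumption \ref{asm:Ladditive} combined with polynomial growth of $\partial f/\partial x$ implied by \eqref{eq:fpolynomial}) and the moment bounds of Lemma \ref{lem:exactbound}, one obtains $\E[|f(s,X_s^{-k\tau})-f(s,X_{t_j}^{-k\tau})|^2] = O(\Delta)$, and Cauchy--Schwarz on the outer $\diff s$-integral yields $O(\Delta^3)$ for the first estimate. The time-regularity term $f(s,X_{t_j}^{-k\tau})-f(t_{j+1},X_{t_{j+1}}^{-k\tau})$ is handled directly via \eqref{eq:fpolynomial} and the H\"older continuity of the exact solution (Lemma \ref{lem:exactholder}).

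For the conditional-expectation estimate, I would take $\E(\cdot\,|\,\F_{-k\tau+t_j})$ inside the Taylor expansion. The linear term against $M_s$ disappears, so only the contribution from $D_s$ and the remainder remain, each being of size $O(\Delta)$ pointwise-in-$s$. Integrating over an interval of length $\Delta$ and squaring produces $O(\Delta^4)$, after using Jensen's inequality for the conditional expectation and the moment estimates of Lemma \ref{lem:exactbound}. The main obstacle in this program is controlling the remainder in the Taylor expansion, which involves $(1+|X_s^{-k\tau}|+|X_{t_j}^{-k\tau}|)^{\gamma-2}|X_s^{-k\tau}-X_{t_j}^{-k\tau}|^2$; one must apply H\"older's inequality with carefully tuned conjugate exponents and appeal to the uniform-in-time high-moment bound of Lemma \ref{lem:exactbound} to absorb the polynomial factors without sacrificing any power of $\Delta$, mirroring the bookkeeping in \cite[Theorem 4.6]{guo2023order}.
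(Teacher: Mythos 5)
Your overall strategy is the right one and matches the argument that the paper delegates to \cite[Theorem 4.6]{guo2023order}: for additive noise the It\^{o}-isometry term is $O(\Delta^{3})$ and drops out of the conditional expectation, while the drift increments are treated by a first-order Taylor expansion arranged so that the martingale part of the solution increment vanishes under $\E(\cdot \,|\, \F_{-k\tau+t_{j}})$. Your treatment of the stochastic integral, of the $A$-terms, and of the piece $f(s,X_{s}^{-k\tau})-f(s,X_{-k\tau+t_{j}}^{-k\tau})$ is sound, granting the moment bookkeeping you explicitly flag.

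There is, however, a genuine gap in your argument for the second estimate in \eqref{eq:RjrJFadditive}. Your two-way splitting leaves the term $f(s,X_{-k\tau+t_{j}}^{-k\tau})-f(t_{j+1},X_{-k\tau+t_{j+1}}^{-k\tau})$, which you call a ``time-regularity term'' and propose to handle directly via \eqref{eq:fpolynomial} and Lemma \ref{lem:exactholder}. But this term is not a pure time increment: it contains the full-step space increment $X_{-k\tau+t_{j}}^{-k\tau} \to X_{-k\tau+t_{j+1}}^{-k\tau}$, whose $L^{2}$-norm is only $O(\Delta^{1/2})$. Handled directly, its mean square is $O(\Delta)$, and Jensen's inequality for conditional expectation cannot improve on this, so after the outer Cauchy--Schwarz step you obtain only $\Delta \cdot \Delta \cdot O(\Delta)=O(\Delta^{3})$ for $\E\big[|\E(\mathcal{R}_{k,j}\,|\,\F_{-k\tau+t_{j}})|^{2}\big]$ --- one full order short of the claimed $C\Delta^{4}$. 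The repair is to split three ways, into $f(s,X_{s}^{-k\tau})-f(s,X_{-k\tau+t_{j}}^{-k\tau})$, then $f(s,X_{-k\tau+t_{j}}^{-k\tau})-f(t_{j+1},X_{-k\tau+t_{j}}^{-k\tau})$ (a genuine time increment, of mean square $O(\Delta^{2})$ by \eqref{eq:fpolynomial} and the moment bounds), and finally $f(t_{j+1},X_{-k\tau+t_{j}}^{-k\tau})-f(t_{j+1},X_{-k\tau+t_{j+1}}^{-k\tau})$, and to apply your Taylor/martingale argument to this last piece as well, anchored at the $\F_{-k\tau+t_{j}}$-measurable point $X_{-k\tau+t_{j}}^{-k\tau}$, so that its martingale part $\int_{-k\tau+t_{j}}^{-k\tau+t_{j+1}}g(r)\diff{W_{r}}$ also cancels under conditioning.

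A related, more minor point: the Taylor formula you display has the gradient averaged along the segment joining $X_{-k\tau+t_{j}}^{-k\tau}$ and $X_{s}^{-k\tau}$, so the coefficient multiplying the increment is \emph{not} $\F_{-k\tau+t_{j}}$-measurable, and the claim ``the linear term against $M_{s}$ disappears'' does not literally apply to it. You need the anchored form $f(s,X_{s}^{-k\tau})-f(s,X_{-k\tau+t_{j}}^{-k\tau}) = \frac{\partial f}{\partial x}(s,X_{-k\tau+t_{j}}^{-k\tau})\,(X_{s}^{-k\tau}-X_{-k\tau+t_{j}}^{-k\tau}) + R_{s}$, with the difference of gradients absorbed into $R_{s}$ via Assumption \ref{asm:Ladditive}; your stated remainder bound indicates this is what you intend, but the conditioning step is only valid in this anchored form.
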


Now we are in a position to present a uniform estimate for the error between the numerical solution $\hat{X}_{-k\tau+t_{j}}^{-k\tau}$ and the exact solution $X_{-k\tau+t_{j}}^{-k\tau}$ in the mean square sense.

\begin{lemma}\label{thm:error}
	Suppose that Assumption \ref{asm:L} holds. Then there exists a constant $C > 0$, independent of $j,k$ and $\Delta$, such that
	\begin{equation}\label{eq:daoshuresult}
		\sup_{k,j \in \mathbb{N}}
		|X_{-k\tau+t_{j}}^{-k\tau}
		-\hat{X}_{-k\tau+t_{j}}^{-k\tau}|_{L^{2}(\Omega;\R^{d})}
		\leq
		C\Delta^{\frac{1}{2}}.
	\end{equation}
	If in addition Assumption \ref{asm:Ladditive} holds, then there exists a constant $C > 0$, independent of $j,k$ and $\Delta$, such that
	\begin{equation}\label{eq:daoshuresultadditive}
		\sup_{k,j \in \mathbb{N}}
		|X_{-k\tau+t_{j}}^{-k\tau}
		-\hat{X}_{-k\tau+t_{j}}^{-k\tau}|_{L^{2}(\Omega;\R^{d})}
		\leq
		C\Delta.
	\end{equation}
\end{lemma}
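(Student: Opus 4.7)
The plan is to adapt the structure of the proof of Lemma \ref{lem:numrob}, adding to it the consistency residual $\mathcal{R}_{k,j}$ from \eqref{eq:errorRkj} and controlling the resulting cross terms with the conditional-expectation estimates of Lemma \ref{eq:RjRjjFjestimate} (resp.\ Lemma \ref{eq:RjRjjFjestimateadditive}). First, I fix $k\in\N$, set $e_j := X_{-k\tau+t_j}^{-k\tau}-\hat X_{-k\tau+t_j}^{-k\tau}$, $\Delta f_m := f(t_m,X_{-k\tau+t_m}^{-k\tau})-f(t_m,\hat X_{-k\tau+t_m}^{-k\tau})$, and $\Delta g_m$ analogously. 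Integrating \eqref{eq:SDE} on $[-k\tau+t_j,-k\tau+t_{j+1}]$ and regrouping it into the $\theta$-method template yields
\begin{align*}
e_{j+1}-\theta\Delta(-Ae_{j+1}+\Delta f_{j+1})
=&~ e_j+(1-\theta)\Delta(-Ae_j+\Delta f_j)\\
&~+\Delta g_j\,\Delta W_{-k\tau+t_j}+\mathcal{R}_{k,j},
\end{align*}
with $e_0=0$.

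Next, I take expectations of squared Euclidean norms on both sides. Splitting the right-hand side as $a+b+c$ with $a$ the $\F_{-k\tau+t_j}$-measurable part, $b=\Delta g_j\Delta W_{-k\tau+t_j}$ the martingale increment, and $c=\mathcal{R}_{k,j}$, independence gives $\E\langle a,b\rangle=0$, so only the cross terms $2\E\langle a,c\rangle$ and $2\E\langle b,c\rangle$ need new treatment. The remaining quadratic contributions are handled exactly as in Lemma \ref{lem:numrob}: the left-hand side is bounded from below by $(1+2(\lambda-L_f)\theta\Delta)\E|e_{j+1}|^2+\theta^2\Delta^2\E|-Ae_{j+1}+\Delta f_{j+1}|^2+2(p^*-1)\theta\Delta\E|\Delta g_{j+1}|^2$ via \eqref{eq:Ainequality} and \eqref{eq:fgmonotonicity}, while $\E|a|^2+\E|b|^2$ on the right-hand side is dominated by $C_\Delta$ times the analogous combination at step $j$.

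For the cross terms I use a weighted Young inequality. Writing $c=\E(c\mid \F_{-k\tau+t_j})+M_j$ with $M_j$ a mean-zero increment, I bound
\[
2|\E\langle a,c\rangle|
\le \alpha\,\E|a|^2+\tfrac{1}{\alpha}\E|\E(c\mid\F_{-k\tau+t_j})|^2
\le 2\alpha\E|e_{j+1}|^2+2\alpha\theta^2\Delta^2\E|-Ae_{j+1}+\Delta f_{j+1}|^2+\tfrac{C\Delta^3}{\alpha},
\]
choosing $\alpha$ a small absolute constant so that both pieces are absorbed into the positive terms on the left without destroying the coefficients $1+2(\lambda-L_f)\theta\Delta$ and $\theta^2\Delta^2$ (this merely shifts $C_\Delta$ by $O(\Delta)$, still giving a contraction in $[0,1)$). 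For $2\E\langle b,c\rangle$ I exploit that $\E\langle b,\E(c\mid\F_{-k\tau+t_j})\rangle=0$ and apply weighted Young against the positive right-hand term $\Delta\E|\Delta g_j|^2$ at step $j$, which likewise absorbs into the $2(p^*-1)\theta\Delta\E|\Delta g_j|^2$ summand. This produces the clean recursion
\[
B_{j+1}\le \tilde C_\Delta\, B_j+\kappa\Delta^q,\qquad j\in\N,
\]
where $B_j$ is the Lemma \ref{lem:numrob}-style combination, $\tilde C_\Delta\in[0,1)$ with $1-\tilde C_\Delta\ge c\Delta$, and $q=2$ under Assumption \ref{asm:L} (via \eqref{eq:RjrJF}), $q=3$ under the additional Assumption \ref{asm:Ladditive} (via \eqref{eq:RjrJFadditive}).

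Finally, iterating with $B_0=0$ yields $B_j\le \kappa\Delta^q\sum_{i=0}^{j-1}\tilde C_\Delta^{i}\le \kappa\Delta^q/(1-\tilde C_\Delta)\le C\Delta^{q-1}$ uniformly in $j,k,\Delta$, giving $\E|e_j|^2\le C\Delta$ (hence \eqref{eq:daoshuresult}) and, under the additive assumption, $\E|e_j|^2\le C\Delta^2$ (hence \eqref{eq:daoshuresultadditive}). The main obstacle I anticipate is the cross term $\E\langle b,c\rangle$: the It\^o-integral piece of $\mathcal{R}_{k,j}$ (coming from $g(s,X_s)-g(t_j,X_{t_j})$) is genuinely correlated with $\Delta W_{-k\tau+t_j}$, and because $\E|\Delta g_j|^2$ carries the superlinear growth of $g$ one cannot afford a crude Cauchy--Schwarz bound; the key is to match it against the already-present positive term $2(p^*-1)\theta\Delta\E|\Delta g_j|^2$ so that no polynomial moment of $e_j$ is ever introduced, which is precisely the device that lets us avoid a discrete Gronwall with blowing-up constants and obtain the bound uniformly on the infinite time horizon.
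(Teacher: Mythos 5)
Your overall architecture is the same as the paper's: form the error recursion with the residual $\mathcal{R}_{k,j}$, expand the squared norms, use \eqref{eq:Ainequality} and \eqref{eq:fgmonotonicity} to produce the step-$(j+1)$ and step-$j$ combinations, treat the two cross terms involving $\mathcal{R}_{k,j}$ separately, and close a geometric recursion with $B_0=0$. Your handling of the martingale cross term $2\E\langle b,\mathcal{R}_{k,j}\rangle$ (matching it against the $(p^{*}-1)$-margin of the $g$-difference terms, where the relative gap is $O(1)$) and your final order accounting ($q=2\Rightarrow$ order $1/2$, $q=3\Rightarrow$ order $1$) are both correct.

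However, there is a genuine gap in your treatment of the drift cross term $2\E\langle a,\mathcal{R}_{k,j}\rangle$, and it sits exactly at the crux of the infinite-horizon argument. You take the Young weight $\alpha$ to be a small \emph{absolute constant} and claim this ``merely shifts $C_\Delta$ by $O(\Delta)$.'' It does not. The contraction gap of the recursion is itself only $O(\Delta)$: the coefficient of $\E[|e_{j+1}|^2]$ on the left is $1+2(\lambda-L_{f})\theta\Delta$, the coefficient of $\E[|e_j|^2]$ on the right is $1-2(\lambda-L_{f})(1-\theta)\Delta$, and their ratio is $1-c\Delta$. Adding $\alpha\,\E[|a|^2]$ to the right-hand side (or subtracting $2\alpha\,\E[|e_{j+1}|^2]$ from the left) perturbs this ratio by $O(\alpha)$, an \emph{absolute} amount; hence for any fixed $\alpha>0$ and all sufficiently small $\Delta$ the resulting factor $\tilde C_\Delta$ exceeds $1$, the sum $\sum_{i<j}\tilde C_\Delta^{\,i}$ grows exponentially in $j$, and the bound cannot be uniform in $j,k$. (Your displayed inequality $\E[|a|^2]\le 2\E[|e_{j+1}|^2]+2\theta^2\Delta^2\E[|-Ae_{j+1}+\Delta f_{j+1}|^2]$ is also incorrect as written: $a$ is the step-$j$, $\F_{-k\tau+t_j}$-measurable quantity, not the step-$(j+1)$ left-hand side.) The repair is forced and is precisely the paper's device: since $a$ is $\F_{-k\tau+t_j}$-measurable, $\E\langle a,\mathcal{R}_{k,j}\rangle=\E\langle a,\E(\mathcal{R}_{k,j}\mid\F_{-k\tau+t_j})\rangle$, and one applies Young with the $\Delta$-proportional weight $\kappa\Delta$, $\kappa:=\min\{\lambda-L_{f},2\theta-1\}$, so that $\kappa\Delta\,\E[|a|^2]$ is absorbed inside the $O(\Delta)$ gap while the penalty $\frac{1}{\kappa\Delta}\E[|\E(\mathcal{R}_{k,j}\mid\F_{-k\tau+t_j})|^2]$ is still of size $C\Delta^2$ (resp.\ $C\Delta^3$) by Lemma \ref{eq:RjRjjFjestimate} (resp.\ Lemma \ref{eq:RjRjjFjestimateadditive}). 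Note that this is the only place where the sharper conditional estimates are actually needed: with a constant weight, the crude bound $\E[|\mathcal{R}_{k,j}|^2]\le C\Delta^2$ would already control the penalty term, which should have signalled that the constant-weight choice cannot be the right mechanism.
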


\begin{proof}
For any $k,j \in \N$ and $t \in \R$, let us first denote
\begin{gather*}
	f_{-k\tau+t}^{-k\tau} := f(t,X_{-k\tau+t}^{-k\tau}),
	\qquad
	\hat{f}_{-k\tau+t_{j}}^{-k\tau}
	:=
	f(t_{j},\hat{X}_{-k\tau+t_{j}}^{-k\tau}),
	\\
	g_{-k\tau+t}^{-k\tau} := g(t,X_{-k\tau+t}^{-k\tau}),
	\qquad
	\hat{g}_{-k\tau+t_{j}}^{-k\tau}
	:=
	g(t_{j},\hat{X}_{-k\tau+t_{j}}^{-k\tau})
\end{gather*}
and $e_{-k\tau+t_{j}}^{-k\tau} := X_{-k\tau+t_{j}}^{-k\tau} - \hat{X}_{-k\tau+t_{j}}^{-k\tau}$.
It follows from \eqref{eq:SDE}, \eqref{eq:BEM} and \eqref{eq:errorRkj} that
\begin{align*}
	&~
	e_{-k\tau + t_{j+1}}^{-k\tau}
	-
	\theta\Delta \big( (-A) e_{-k\tau + t_{j+1}}^{-k\tau}
	+ f_{-k\tau + t_{j+1}}^{-k\tau}
	- \hat{f}_{-k\tau + t_{j+1}}^{-k\tau} \big)
	\nonumber
	\\ = &~
	e_{-k\tau+t_{j}}^{-k\tau}
	+ (1-\theta)\Delta \big( (-A) e_{-k\tau + t_{j}}^{-k\tau}
	+f_{-k\tau + t_{j}}^{-k\tau}
	- \hat{f}_{-k\tau + t_{j}}^{-k\tau} \big)
	\\&~+
	\big( g_{-k\tau + t_{j}}^{-k\tau}
	- \hat{g}_{-k\tau + t_{j}}^{-k\tau} \big)
    {\Delta W_{-k\tau+t_{j}}}
	+ \mathcal{R}_{k,j}.
\end{align*}
Due to that the terms $e_{-k\tau + t_{j}}^{-k\tau}$,
$f_{-k\tau + t_{j}}^{-k\tau}$,
$\hat{f}_{-k\tau + t_{j}}^{-k\tau}$,
$g_{-k\tau + t_{j}}^{-k\tau}$ and
$\hat{g}_{-k\tau + t_{j}}^{-k\tau}$ are $\F_{-k\tau+t_{j}}$-measurable,
we apply the independence between $g_{-k\tau + t_{j}}^{-k\tau} - \hat{g}_{-k\tau + t_{j}}^{-k\tau}$ and $\F_{-k\tau+t_{j}}$ to show
\begin{align}\label{lem:error:e2}
	\notag
	&~\E\big[\big|e_{-k\tau + t_{j+1}}^{-k\tau}
	- \theta\Delta \big( (-A) e_{-k\tau + t_{j+1}}^{-k\tau}
	+ f_{-k\tau + t_{j+1}}^{-k\tau}
	- \hat{f}_{-k\tau + t_{j+1}}^{-k\tau} \big)\big|^{2}\big]
	\\=&~\notag
	\E\big[\big|e_{-k\tau+t_{j}}^{-k\tau}
	+ (1-\theta)\Delta
	\big( (-A) e_{-k\tau + t_{j}}^{-k\tau}
	+ f_{-k\tau + t_{j}}^{-k\tau}
	- \hat{f}_{-k\tau + t_{j}}^{-k\tau} \big)\big|^{2}\big]
	+
	\E\big[|\mathcal{R}_{k,j}|^{2}\big]
	\\&~+
	\Delta\E\big[|g_{-k\tau + t_{j}}^{-k\tau}
	- \hat{g}_{-k\tau + t_{j}}^{-k\tau}|^{2}\big]
	+
	2\E\big[\big\langle
	\big(g_{-k\tau + t_{j}}^{-k\tau}
	- \hat{g}_{-k\tau + t_{j}}^{-k\tau} \big)
	{\Delta W_{-k\tau+t_{j}}},
	\mathcal{R}_{k,j}\big\rangle\big]
	\\&~+\notag
	2\E\big[\big\langle
	e_{-k\tau+t_{j}}^{-k\tau}
	+ (1-\theta)\Delta
	\big( (-A) e_{-k\tau + t_{j}}^{-k\tau}
	+ f_{-k\tau + t_{j}}^{-k\tau}
	- \hat{f}_{-k\tau + t_{j}}^{-k\tau} \big),
	\mathcal{R}_{k,j} \big\rangle\big].
\end{align}
As a consequence of \eqref{asm:fg-fg} and \eqref{eq:Ainequality}, we obtain
\begin{align}\label{lem:error:e2b-left}
	&~\notag
	\E\big[\big|e_{-k\tau + t_{j+1}}^{-k\tau}
	- \theta\Delta \big( (-A) e_{-k\tau + t_{j+1}}^{-k\tau}
	+ f_{-k\tau + t_{j+1}}^{-k\tau}
	- \hat{f}_{-k\tau + t_{j+1}}^{-k\tau} \big)\big|^{2}\big]
	\\=&~\notag
	\E\big[|e_{-k\tau + t_{j+1}}^{-k\tau}|^{2}\big]
	+
	\theta^{2}\Delta^{2}
	\E\big[|(-A) e_{-k\tau + t_{j+1}}^{-k\tau}
	+ f_{-k\tau + t_{j+1}}^{-k\tau}
	- \hat{f}_{-k\tau + t_{j+1}}^{-k\tau}|^{2}\big]
	\\&~
	+2\theta\Delta
	\E\big[\langle e_{-k\tau + t_{j+1}}^{-k\tau},
	A e_{-k\tau + t_{j+1}}^{-k\tau}\rangle\big]
	-2\theta\Delta
	\E\big[\langle e_{-k\tau + t_{j+1}}^{-k\tau},
	f_{-k\tau + t_{j+1}}^{-k\tau}
	- \hat{f}_{-k\tau + t_{j+1}}^{-k\tau}\rangle\big]
	\\\geq&~\notag
	\big(1 + 2(\lambda-L_{f})\theta\Delta \big)
	\E\big[|e_{-k\tau + t_{j+1}}^{-k\tau}|^{2}\big]
	+2(p^{*}-1)\theta\Delta
	\E\big[|g_{-k\tau + t_{j+1}}^{-k\tau}
	- \hat{g}_{-k\tau + t_{j+1}}^{-k\tau}|^{2}\big]
	\\&~+\notag
	\theta^{2}\Delta^{2}
	\E\big[|(-A) e_{-k\tau + t_{j+1}}^{-k\tau}
	+ f_{-k\tau + t_{j+1}}^{-k\tau}
	- \hat{f}_{-k\tau + t_{j+1}}^{-k\tau}|^{2}\big],
\end{align}
and
\begin{align}\label{lem:error:e2b-right-f}
	\notag
	&~\E\big[\big|e_{-k\tau+t_{j}}^{-k\tau}
	+ (1-\theta)\Delta
	\big( (-A) e_{-k\tau + t_{j}}^{-k\tau}
	+ f_{-k\tau + t_{j}}^{-k\tau}
	- \hat{f}_{-k\tau + t_{j}}^{-k\tau} \big)\big|^{2}\big]
	\\=&~\notag
	\E\big[|e_{-k\tau+t_{j}}^{-k\tau}|^{2}\big]
	+
	(1-\theta)^{2}\Delta^{2}
	\E\big[ |(-A) e_{-k\tau + t_{j}}^{-k\tau}
	+ f_{-k\tau + t_{j}}^{-k\tau}
	- \hat{f}_{-k\tau + t_{j}}^{-k\tau}|^{2}\big]
	\\&~
	+2(1-\theta)\Delta\E\big[\big\langle
	e_{-k\tau+t_{j}}^{-k\tau},
	(-A) e_{-k\tau + t_{j}}^{-k\tau}
	+ f_{-k\tau + t_{j}}^{-k\tau}
	- \hat{f}_{-k\tau + t_{j}}^{-k\tau}
	\big\rangle\big]
	\\\leq&~\notag
	\big(1-2(\lambda-L_{f})(1-\theta)\Delta\big)
	\E\big[|e_{-k\tau+t_{j}}^{-k\tau}|^{2}\big]
	\\&~+\notag
	(1-\theta)^{2}\Delta^{2}
	\E\big[|(-A) e_{-k\tau + t_{j}}^{-k\tau}
	+ f_{-k\tau + t_{j}}^{-k\tau}
	- \hat{f}_{-k\tau + t_{j}}^{-k\tau}|^{2}\big]
	\\&~-\notag
	2(p^{*}-1)(1-\theta)\Delta
	\E\big[|g_{-k\tau + t_{j}}^{-k\tau}
	- \hat{g}_{-k\tau + t_{j}}^{-k\tau}|^{2}\big].
\end{align}
Utilizing the weighted Young inequality $2ab \leq \kappa a^{2} + \frac{b^{2}}{\kappa}, a,b \in \R$ with $\kappa := \min\{\lambda-L_{f},2\theta-1\} > 0$ leads to
\begin{align}\label{lem:error:e2-right-g}
	\notag
	&~2\E\big[\big\langle
	\big(g_{-k\tau + t_{j}}^{-k\tau}
	- \hat{g}_{-k\tau + t_{j}}^{-k\tau} \big)
	{\Delta W_{-k\tau+t_{j}}},
	\mathcal{R}_{k,j}\big\rangle\big]
	\\\leq&~
	(p^{*}-2) \Delta
	\E\big[|g_{-k\tau + t_{j}}^{-k\tau}
	- \hat{g}_{-k\tau + t_{j}}^{-k\tau}|^{2}\big]
	+
	\frac{1}{p^{*}-2}
	\E\big[|\mathcal{R}_{k,j}|^{2}\big],
\end{align}
and
\begin{align}\label{lem:error:e2-right-f}
	&~\notag
	2\E\big[\big\langle
	e_{-k\tau+t_{j}}^{-k\tau}
	+ (1-\theta)\Delta
	\big( (-A) e_{-k\tau + t_{j}}^{-k\tau}
	+ f_{-k\tau + t_{j}}^{-k\tau}
	- \hat{f}_{-k\tau + t_{j}}^{-k\tau} \big),
	\mathcal{R}_{k,j} \big\rangle\big]
	\\=&~
	2\E\big[\big\langle
	e_{-k\tau+t_{j}}^{-k\tau}
	+ (1-\theta)\Delta
	\big( (-A) e_{-k\tau + t_{j}}^{-k\tau}
	+ f_{-k\tau + t_{j}}^{-k\tau}
	- \hat{f}_{-k\tau + t_{j}}^{-k\tau} \big),
	\E(\mathcal{R}_{k,j} ~|~\F_{-k\tau+t_{j}}) \big\rangle\big]
	\\\leq&~\notag
	\kappa\Delta
	\E\big[|e_{-k\tau+t_{j}}^{-k\tau}
	+ (1-\theta)\Delta
	\big( (-A) e_{-k\tau + t_{j}}^{-k\tau}
	+ f_{-k\tau + t_{j}}^{-k\tau}
	- \hat{f}_{-k\tau + t_{j}}^{-k\tau} \big)|^{2}\big]
	\\&~+\notag
	\frac{1}{\kappa\Delta}
	\E\big[|\E(\mathcal{R}_{k,j} ~|~\F_{-k\tau+t_{j}})|^{2}\big].
\end{align}
Substituting  \eqref{lem:error:e2b-left}, \eqref{lem:error:e2b-right-f}, \eqref{lem:error:e2-right-g} and \eqref{lem:error:e2-right-f} into \eqref{lem:error:e2} yields
%
\begin{align}\label{lem:error:lr}
	\notag
	&~\big(1 + 2(\lambda-L_{f})\theta\Delta \big)
	\E\big[|e_{-k\tau + t_{j+1}}^{-k\tau}|^{2}\big]
	\\&~\notag
	+2(p^{*}-1)\theta\Delta
	\E\big[|g_{-k\tau + t_{j+1}}^{-k\tau}
	- \hat{g}_{-k\tau + t_{j+1}}^{-k\tau}|^{2}\big]
	\\&~+\notag
	\theta^{2}\Delta^{2}
	\E\big[|(-A) e_{-k\tau + t_{j+1}}^{-k\tau}
	+ f_{-k\tau + t_{j+1}}^{-k\tau}
	- \hat{f}_{-k\tau + t_{j+1}}^{-k\tau}|^{2}\big]
	\\\leq&~
	(1+\kappa\Delta)
	\big(1-2(\lambda-L_{f})(1-\theta)\Delta\big)
	\E\big[|e_{-k\tau+t_{j}}^{-k\tau}|^{2}\big]
	\\&~+\notag
	\big(1-2(1+\kappa\Delta)(1-\theta)\big)(p^{*}-1)\Delta
	\E\big[|g_{-k\tau + t_{j}}^{-k\tau}
	- \hat{g}_{-k\tau + t_{j}}^{-k\tau}|^{2}\big]
	\\&~+\notag
	(1+\kappa\Delta)(1-\theta)^{2}\Delta^{2}
	\E\big[|(-A) e_{-k\tau + t_{j}}^{-k\tau}
	+ f_{-k\tau + t_{j}}^{-k\tau}
	- \hat{f}_{-k\tau + t_{j}}^{-k\tau}|^{2}\big]
	\\&~+\notag
	\frac{p^{*}-1}{p^{*}-2}
	\E\big[|\mathcal{R}_{k,j}|^{2}\big]
	+
	\frac{1}{\kappa\Delta}
	\E\big[|\E(\mathcal{R}_{k,j} ~|~\F_{-k\tau+t_{j}})|^{2}\big].
\end{align}
By means of Lemma \ref{eq:RjRjjFjestimate}, we obtain
\begin{align}\label{lem:error:2}
	\notag
	&~\big(1 + 2(\lambda-L_{f})\theta\Delta \big)
	\E\big[|e_{-k\tau + t_{j+1}}^{-k\tau}|^{2}\big]
	\\&~+\notag
	2(p^{*}-1)\theta\Delta
	\E\big[|g_{-k\tau + t_{j+1}}^{-k\tau}
	- \hat{g}_{-k\tau + t_{j+1}}^{-k\tau}|^{2}\big]
	\\&~+\notag
	\theta^{2}\Delta^{2}
	\E\big[|(-A) e_{-k\tau + t_{j+1}}^{-k\tau}
	+ f_{-k\tau + t_{j+1}}^{-k\tau}
	- \hat{f}_{-k\tau + t_{j+1}}^{-k\tau}|^{2}\big]
	\\\leq&~
	(1+\kappa\Delta)
	\big(1-2(\lambda-L_{f})(1-\theta)\Delta\big)
	\E\big[|e_{-k\tau+t_{j}}^{-k\tau}|^{2}\big]
	\\&~+\notag
	\big(1-2(1+\kappa\Delta)(1-\theta)\big)(p^{*}-1)\Delta
	\E\big[|g_{-k\tau + t_{j}}^{-k\tau}
	- \hat{g}_{-k\tau + t_{j}}^{-k\tau}|^{2}\big]
	\\&~+\notag
	(1+\kappa\Delta)(1-\theta)^{2}\Delta^{2}
	\E\big[|(-A) e_{-k\tau + t_{j}}^{-k\tau}
	+ f_{-k\tau + t_{j}}^{-k\tau}
	- \hat{f}_{-k\tau + t_{j}}^{-k\tau}|^{2}\big]
	+
	C\Delta^{2}.
\end{align}
From $\kappa = \min\{\lambda-L_{f},2\theta-1\} > 0$ and $2(1-\theta)^{2}\Delta \leq 1$, one gets
\begin{align*}
	&~(1+\kappa\Delta)
	\big(1-2(\lambda-L_{f})(1-\theta)\Delta\big)
	\\=&~
	1 + \kappa\Delta - 2(\lambda-L_{f})\Delta
	+ 2(\lambda-L_{f})\theta\Delta
	- 2\kappa(\lambda-L_{f})(1-\theta)\Delta^{2}
	\\\leq&~
	1 + 2(\lambda-L_{f})\theta\Delta
	- (\lambda-L_{f})\Delta
	\\=&~
	\big(1 + 2(\lambda-L_{f})\theta\Delta\big)
	\bigg(1- \frac{(\lambda-L_{f})\Delta}
	{1 + 2(\lambda-L_{f})\theta\Delta}\bigg),
\end{align*}
as well as
\begin{align*}
	1-2(1+\kappa\Delta)(1-\theta)
	=
	-1+2\theta-2\kappa\Delta(1-\theta)
	\leq
	2\theta-1
	=
	2\theta\bigg(1-\frac{1}{2\theta}\bigg),
\end{align*}
and
\begin{align*}
	(1+\kappa\Delta)(1-\theta)^{2}
	=&~
	\theta^{2} - (2\theta-1)
	+
	\kappa\Delta(1-\theta)^{2}
	\\\leq&~
	\theta^{2} - (2\theta-1)\big(1-\Delta(1-\theta)^{2}\big)
	\\\leq&~
	\theta^{2} - \frac{2\theta-1}{2}
	=
	\theta^{2}\bigg(1 - \frac{2\theta-1}{2\theta^{2}}\bigg).
\end{align*}
Setting
\begin{align}
	\overline{C_{\Delta}}
	:=
	\max\bigg\{
	1- \frac{(\lambda-L_{f})\Delta}
	{1 + 2(\lambda-L_{f})\theta\Delta},~
	1 - \frac{1}{2\theta},~
	1 - \frac{2\theta-1}{2\theta^{2}}
	\bigg\} \in {[0,1)}
\end{align}
and employing \eqref{lem:error:2} indicate
\begin{align}
	\notag
	&~\big(1 + 2(\lambda-L_{f})\theta\Delta \big)
	\E\big[|e_{-k\tau + t_{j+1}}^{-k\tau}|^{2}\big]
	\\&~+\notag
	2(p^{*}-1)\theta\Delta
	\E\big[|g_{-k\tau + t_{j+1}}^{-k\tau}
	- \hat{g}_{-k\tau + t_{j+1}}^{-k\tau}|^{2}\big]
	\\&~+\notag
	\theta^{2}\Delta^{2}
	\E\big[|(-A) e_{-k\tau + t_{j+1}}^{-k\tau}
	+ f_{-k\tau + t_{j+1}}^{-k\tau}
	- \hat{f}_{-k\tau + t_{j+1}}^{-k\tau}|^{2}\big]
	\\\leq&~\notag
	\overline{C_{\Delta}}
	\Big(\big(1 + 2(\lambda-L_{f})\theta\Delta \big)
	\E\big[|e_{-k\tau+t_{j}}^{-k\tau}|^{2}\big]
	\\&~+\notag
	2(p^{*}-1)\theta\Delta
	\E\big[|g_{-k\tau + t_{j}}^{-k\tau}
	- \hat{g}_{-k\tau + t_{j}}^{-k\tau}|^{2}\big]
	\\&~+\notag
	\theta^{2}\Delta^{2}
	\E\big[|(-A) e_{-k\tau + t_{j}}^{-k\tau}
	+ f_{-k\tau + t_{j}}^{-k\tau}
	- \hat{f}_{-k\tau + t_{j}}^{-k\tau}|^{2}\big]\Big)
	+
	C\Delta^{2},
\end{align}
which implies
\begin{align*}%
	\notag
	&~\big(1 + 2(\lambda-L_{f})\theta\Delta \big)
	\E\big[|e_{-k\tau + t_{j+1}}^{-k\tau}|^{2}\big]
	+
	2(p^{*}-1)\theta\Delta
	\E\big[|g_{-k\tau + t_{j+1}}^{-k\tau}
	- \hat{g}_{-k\tau + t_{j+1}}^{-k\tau}|^{2}\big]
	\\&~+\notag
	\theta^{2}\Delta^{2}
	\E\big[|(-A) e_{-k\tau + t_{j+1}}^{-k\tau}
	+ f_{-k\tau + t_{j+1}}^{-k\tau}
	- \hat{f}_{-k\tau + t_{j+1}}^{-k\tau}|^{2}\big]
	- \frac{C\Delta^{2}}{1-\overline{C_{\Delta}}}
	\\\leq&~\notag
	\overline{C_{\Delta}}\bigg(
	\big(1 + 2(\lambda-L_{f})\theta\Delta \big)
	\E\big[|e_{-k\tau+t_{j}}^{-k\tau}|^{2}\big]
	+
	2(p^{*}-1)\theta\Delta
	\E\big[|g_{-k\tau + t_{j}}^{-k\tau}
	- \hat{g}_{-k\tau + t_{j}}^{-k\tau}|^{2}\big]
	\\&~+\notag
	\theta^{2}\Delta^{2}
	\E\big[|(-A) e_{-k\tau + t_{j}}^{-k\tau}
	+ f_{-k\tau + t_{j}}^{-k\tau}
	- \hat{f}_{-k\tau + t_{j}}^{-k\tau}|^{2}\big]
	-
	\frac{C\Delta^{2}}{1-\overline{C_{\Delta}}}\bigg).
\end{align*}
It follows from an iterative argument and $e_{-k\tau}^{-k\tau}= 0$,
$f_{-k\tau}^{-k\tau} = \hat{f}_{-k\tau}^{-k\tau}$,
$g_{-k\tau}^{-k\tau} = \hat{g}_{-k\tau}^{-k\tau}$ that
\begin{align*}
	\notag
	&~\big(1 + 2(\lambda-L_{f})\theta\Delta \big)
	\E\big[|e_{-k\tau + t_{j}}^{-k\tau}|^{2}\big]
	+
	2(p^{*}-1)\theta\Delta
	\E\big[|g_{-k\tau + t_{j}}^{-k\tau}
	- \hat{g}_{-k\tau + t_{j}}^{-k\tau}|^{2}\big]
	\\&~+\notag
	\theta^{2}\Delta^{2}
	\E\big[|(-A) e_{-k\tau + t_{j}}^{-k\tau}
	+ f_{-k\tau + t_{j}}^{-k\tau}
	- \hat{f}_{-k\tau + t_{j}}^{-k\tau}|^{2}\big]
	- \frac{C\Delta^{2}}{1-\overline{C_{\Delta}}}
	\\\leq&~\notag
	\overline{C_{\Delta}}^{j}\bigg(
	\big(1 + 2(\lambda-L_{f})\theta\Delta \big)
	\E\big[|e_{-k\tau}^{-k\tau}|^{2}\big]
	+
	2(p^{*}-1)\theta\Delta
	\E\big[|g_{-k\tau}^{-k\tau}
	- \hat{g}_{-k\tau}^{-k\tau}|^{2}\big]
	\\&~+\notag
	\theta^{2}\Delta^{2}
	\E\big[|(-A) e_{-k\tau}^{-k\tau}
	+ f_{-k\tau}^{-k\tau}
	- \hat{f}_{-k\tau}^{-k\tau}|^{2}\big]
	-
	\frac{C\Delta^{2}}{1-\overline{C_{\Delta}}}\bigg)
	\\\leq&~\notag
	0.
\end{align*}
Together with
\begin{align*}
	\frac{1}{1-\overline{C_{\Delta}}}
	=
	\max\bigg\{
	\frac{1 + 2(\lambda-L_{f})\theta\Delta}
	{(\lambda-L_{f})\Delta},~ 2\theta,~
	\frac{2\theta^{2}}{2\theta-1}
	\bigg\}
	\leq
	\Delta^{-1}\max\bigg\{
	\frac{1 + 2(\lambda-L_{f})\theta}
	{\lambda-L_{f}},~ 2\theta,~
	\frac{2\theta^{2}}{2\theta-1}
	\bigg\},
\end{align*}
we derive
the desired result \eqref{eq:daoshuresult}. Concerning \eqref{eq:daoshuresultadditive}, it suffices to put \eqref{eq:RjrJFadditive} into \eqref{lem:error:lr} and continue the remainder steps as that to derive \eqref{eq:daoshuresult}. Thus we complete the proof.
\end{proof}

At this stage, it is necessary to compare our convergence result in Lemma \ref{thm:error} with that in \cite{wang2020meansquare}, where the authors establish the mean square convergence rates of ST methods with $\theta \in [1/2,1]$ on finite time interval for SDEs under a coupled monotonicity condition. In contrast, the convergence result for ST methods with critical parameter $\theta = \frac{1}{2}$ does not include in Lemma \ref{thm:error}, which can be attributed to the infiniteness of time interval and the superlinear growth of coefficients (see \cite{rong2020numerical} for SDEs with global Lipschitz coefficients). Actually, the upper bound on the convergence error of ST method with $\theta = \frac{1}{2}$ grows exponentially with the length of time interval, making a convergence analysis on infinite time interval impossible.


Armed with the above preparations, we are able to reveal the main result of this work.

\begin{theorem}\label{cor:error}
	Let $X_{t}^{*}$ be the random periodic solution of SDE \eqref{eq:SDE} and $\hat{X}_{t}^{*}$ the random periodic solution of the stochastic theta approximation \eqref{eq:BEM}. Suppose that Assumption \ref{asm:L} holds. Then there exists $C > 0$, independent of $\Delta$, such that
	\begin{equation}\label{eq:convergenceorder}
		|X_{t}^{*}-\hat{X}_{t}^{*}|_{L^{2}(\Omega;\R^{d})}
		\leq
		C\Delta^{\frac{1}{2}}.
	\end{equation}
	If in addition Assumption \ref{asm:Ladditive} holds, then there exists $C > 0$, independent of $\Delta$, such that
	\begin{equation}\label{eq:convergenceorderadditive}
		|X_{t}^{*}-\hat{X}_{t}^{*}|_{L^{2}(\Omega;\R^{d})}
		\leq
		C\Delta.
	\end{equation}
\end{theorem}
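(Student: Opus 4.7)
The plan is to combine the uniform-in-$k$ error estimate of Lemma \ref{thm:error} with the two pull-back convergences of Theorems \ref{thm:ep} and \ref{thm:nump} via a triangle inequality. Fix $t \geq 0$ lying on the grid $\mathcal{P}^{\Delta}$ (which, together with the natural hypothesis $\tau/\Delta \in \N$, ensures that $t$ is simultaneously a grid node of every scheme started at $-k\tau$, $k \in \N$). For every $k \in \N$ I would then write
\begin{equation*}
|X_{t}^{*}-\hat{X}_{t}^{*}|_{L^{2}(\Omega;\R^{d})}
\leq
|X_{t}^{*}-X_{t}^{-k\tau}|_{L^{2}(\Omega;\R^{d})}
+
|X_{t}^{-k\tau}-\hat{X}_{t}^{-k\tau}|_{L^{2}(\Omega;\R^{d})}
+
|\hat{X}_{t}^{-k\tau}-\hat{X}_{t}^{*}|_{L^{2}(\Omega;\R^{d})}.
\end{equation*}

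Next I would let $k \to \infty$. By \eqref{thm:ep:rs} of Theorem \ref{thm:ep} the first summand vanishes, and by \eqref{thm:ep:rsrere} of Theorem \ref{thm:nump} so does the third. The middle term is the crucial one: Lemma \ref{thm:error} furnishes
\begin{equation*}
|X_{t}^{-k\tau}-\hat{X}_{t}^{-k\tau}|_{L^{2}(\Omega;\R^{d})}
\leq
\sup_{k',j \in \N}
|X_{-k'\tau+t_{j}}^{-k'\tau}-\hat{X}_{-k'\tau+t_{j}}^{-k'\tau}|_{L^{2}(\Omega;\R^{d})}
\leq
C\Delta^{1/2},
\end{equation*}
with $C$ independent of both $k$ and $\Delta$. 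Sending $k \to \infty$ in the triangle inequality then yields \eqref{eq:convergenceorder}. The additive statement \eqref{eq:convergenceorderadditive} follows verbatim, invoking the improved bound $C\Delta$ from the second part of Lemma \ref{thm:error}.

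The main obstacle has in fact already been shouldered by Lemma \ref{thm:error}: without an estimate on $|X_{-k\tau+t_{j}}^{-k\tau}-\hat{X}_{-k\tau+t_{j}}^{-k\tau}|_{L^{2}}$ that does \emph{not} deteriorate as $k,j \to \infty$, the triangle decomposition above would blow up in the limit and destroy the order. That uniformity, obtained there by the contraction-type recursion with factor $\overline{C_{\Delta}}$ combined with the sharp local residual bounds of Lemmas \ref{eq:RjRjjFjestimate} and \ref{eq:RjRjjFjestimateadditive}, is exactly what makes the pull-back passage compatible with the stepsize order. For a time $t$ that does not lie on the grid, a short interpolation argument combining the H\"older continuity of Lemma \ref{lem:exactholder} with an analogous one-step estimate for the scheme closes the gap without affecting the rate.
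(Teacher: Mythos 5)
Your proof is correct and is essentially identical to the paper's own argument: the same triangle inequality through the pull-back sequences $X_{t}^{-k\tau}$ and $\hat{X}_{t}^{-k\tau}$, with the first and third terms vanishing as $k \to \infty$ by Theorems \ref{thm:ep} and \ref{thm:nump}, and the middle term bounded uniformly in $k$ by Lemma \ref{thm:error}. Your added remarks on grid compatibility ($\tau/\Delta \in \N$) and on off-grid times make explicit details the paper leaves implicit, but they do not change the approach.
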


\begin{proof}
Based on \eqref{thm:ep:rs} and \eqref{thm:ep:rsrere}, we utilize Lemma \ref{thm:error} and
\begin{equation*}
	|X_{t}^{*}-\hat{X}_{t}^{*}|_{L^{2}(\Omega;\R^{d})}
	\leq
	\varlimsup_{k \to \infty}
	\big(
	|X_{t}^{*}-X_{t}^{-k\tau}|_{L^{2}(\Omega;\R^{d})}
	+
	|X_{t}^{-k\tau}-\hat{X}_{t}^{-k\tau}|_{L^{2}(\Omega;\R^{d})}
	+
	|\hat{X}_{t}^{-k\tau}-\hat{X}_{t}^{*}|_{L^{2}(\Omega;\R^{d})}
	\big)
\end{equation*}
to obtain \eqref{eq:convergenceorder} and \eqref{eq:convergenceorderadditive}
and finish the proof.
\end{proof}

\section{Numerical experiments}
\label{sec:numerical}
Some numerical experiments will be performed to illustrate the previous theoretical results in this section. Let us focus on the following one-dimensional SDE
\begin{align}\label{expl:one}
	\text{d}X^{t_0}_{t}
	=
	\big(-\lambda X^{t_0}_{t}
	- a(X^{t_0}_{t})^3 (1+\sin(\pi t))\big)\text{d}t
	+
	\big( b + cX^{t_0}_{t}
	+ d (X^{t_0}_{t})^{2}(1+\sin(\pi t)) \big)\text{d}W_{t}
\end{align}
for all $t \geq t_{0}$, where $\lambda, a > 0$ and $b,c,d \in \R$. Compared with \eqref{eq:SDE}, we know that
\begin{align}\label{expl:fg}
	A = \lambda,
	\quad
	f(t,x)
	=
	-ax^3 (1+\sin(\pi t)),
	\quad
	g(t,x)
	=
	b + cx + dx^2 (1+\sin(\pi t)),
	\quad
	x \in \R, t \in \R.
\end{align}
Obviously, \rm{(A1)} holds. Besides, $f$ and $g$ are continuous and periodic in time with period $\tau = 2 > 0$, i.e., \rm{(A2)} holds. Noting that for any $x,y \in \R$ and $s,t \in \R$, we have
\begin{align}\label{eq:ftxfsy}
	|f(t,x)-f(s,y)|
	=&~\notag
	|-a(x^3-y^3)(1+\sin(\pi t))
	- ay^3(\sin(\pi t)-\sin(\pi s))|
	\\\leq&~
	2a(|x|^2 + |x||y|+ |y|^2)|x-y|
	+
	a\pi|y|^{3}|t-s|
	\\\leq&~\notag
	2a(1 + |x| + |y|)^2|x-y|
	+
	a\pi(1 + |x| + |y|)^3|t-s|,
\end{align}
and
\begin{align*}
	&~(x-y)(f(t,x)-f(s,y))
	\\=&~
	(x-y)\big(-a(x^3-y^3)(1+\sin(\pi t))
	- ay^3(\sin(\pi t)-\sin(\pi s))\big)
	\\\leq&~
	-a|x-y|^{2}(x^2+xy+y^2)(1+\sin(\pi t))
	+ a\pi(|x||y|^3+|y|^{4})|t-s|,
\end{align*}
as well as
\begin{align*}
	|g(t,x)-g(s,y)|^{2}
	=&~
	|c(x-y) + d(x^2-y^2)(1+\sin(\pi t))
	+ dy^2(\sin(\pi t) - \sin(\pi s))|^{2}
	\\\leq&~
	3c^{2}|x-y|^{2}
	+
	6d^{2}|x-y|^{2}(x+y)^2(1+\sin(\pi t))
	+
	6{\pi} d^{2}|y|^4|t-s|.
\end{align*}
By requiring
\begin{equation}\label{eq:12d2p*1}
	12d^{2}(p^*-1) \leq a,
\end{equation}
one can show that for any $x,y \in \R$ and $s,t \in \R$,
\begin{align*}
	&~(x-y)(f(t,x)-f(s,y))+(p^*-1)|g(t,x)-g(s,y)|^{2}
	\\\leq&~
	\big(3c^{2}(p^*-1)
	-
	\big(a(x^2+xy+y^2)
	-
	6d^{2}(p^*-1)(x+y)^2\big)(1+\sin(\pi t))\big)|x-y|^{2}
	\\&~+
	a\pi(|x||y|^3+|y|^{4})|t-s|
	+
	6{\pi}d^{2}(p^*-1)|y|^4|t-s|
	\\\leq&~
	3c^{2}(p^*-1)|x-y|^{2}
	+
	\big(2a\pi + 6{\pi}d^{2}(p^*-1)\big)
	(1 + |x|^{4} + |y|^{4})|t-s|,
\end{align*}
which together with \eqref{eq:ftxfsy} indicates that \rm{(A3)} holds with $L_{f} = 3c^{2}(p^*-1)$ and $\gamma = 3$. In combination with \eqref{eq:12d2p*1}, $L_{f} \in (0,\lambda)$ and {$p^{*} > 5\gamma$,} we choose
\begin{align*}
	p^{*} = 21,     \quad \lambda = 5\pi,
	\quad a = 3,    \quad b = 1.5,
	\quad c = 0.5,  \quad d = 0.1
\end{align*}
such that all conditions in Assumption \ref{asm:L} are fulfilled. It follows that \eqref{expl:one} admits a unique random periodic solution. 
Before performing experiments, we emphasize that in what follows the Newton iterations with precision $10^{-5}$ are employed to solve the nonlinear equation arising from the implementation of the implicit ST method in every time step.

\begin{figure}[!htbp]
	\begin{center}
		\subfigure[$\theta = 0.75$]
		{\includegraphics[width=0.45\textwidth]{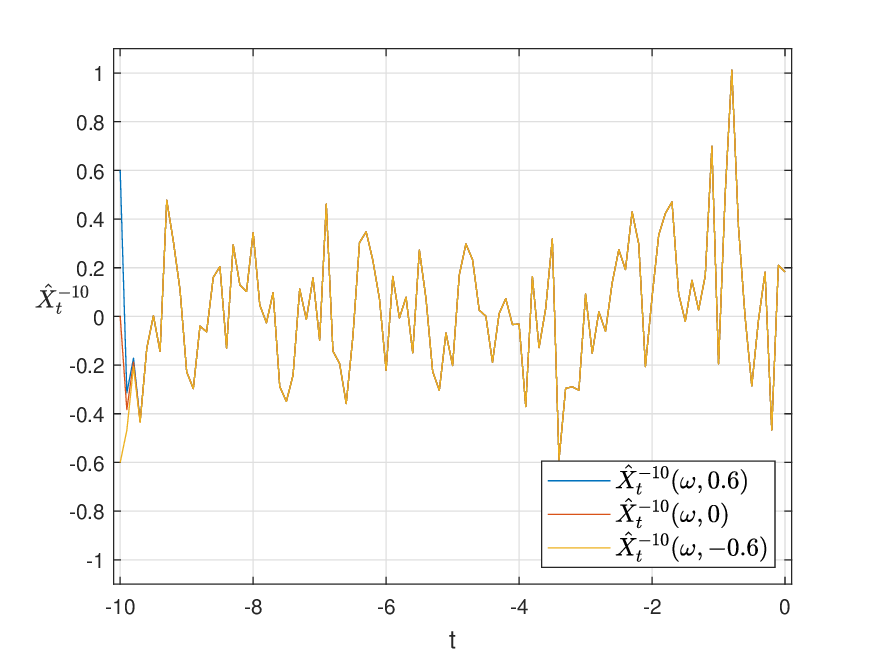}}
		\subfigure[$\theta = 1$]
		{\includegraphics[width=0.45\textwidth]{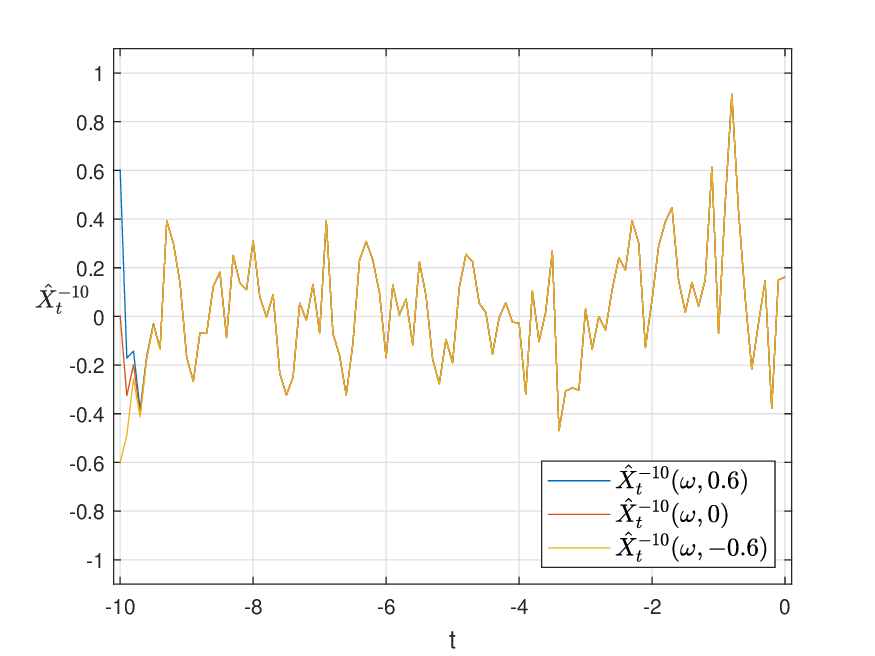}}
		\centering\caption{Random periodic solution does not dependent on the initial values}
		\label{fig:initial}
	\end{center}
\end{figure}

\begin{figure}[!htbp]
	\begin{center}
		\subfigure[$\theta = 0.75$]
		{\includegraphics[width=0.45\textwidth]{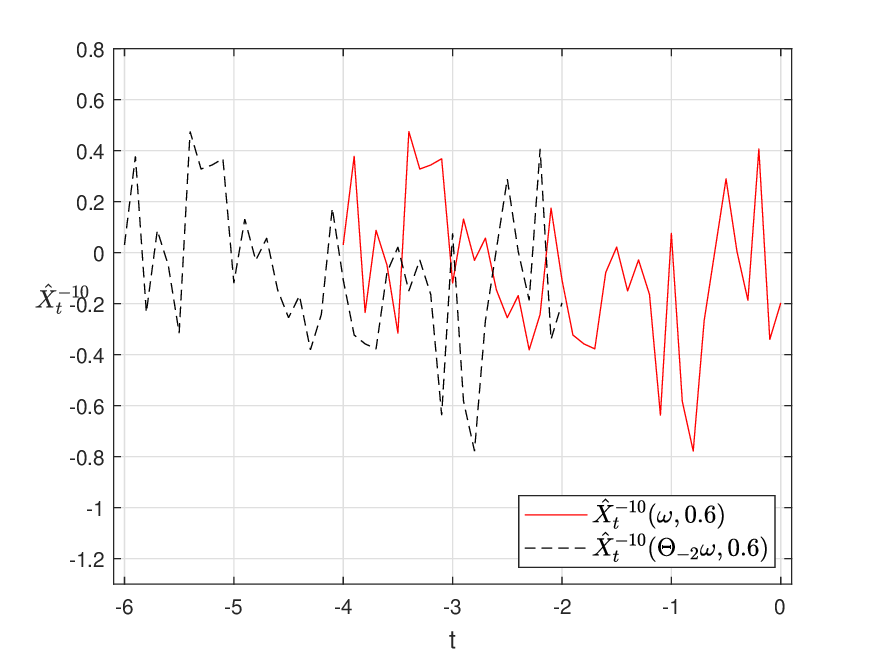}}
		\subfigure[$\theta = 1$]
		{\includegraphics[width=0.45\textwidth]{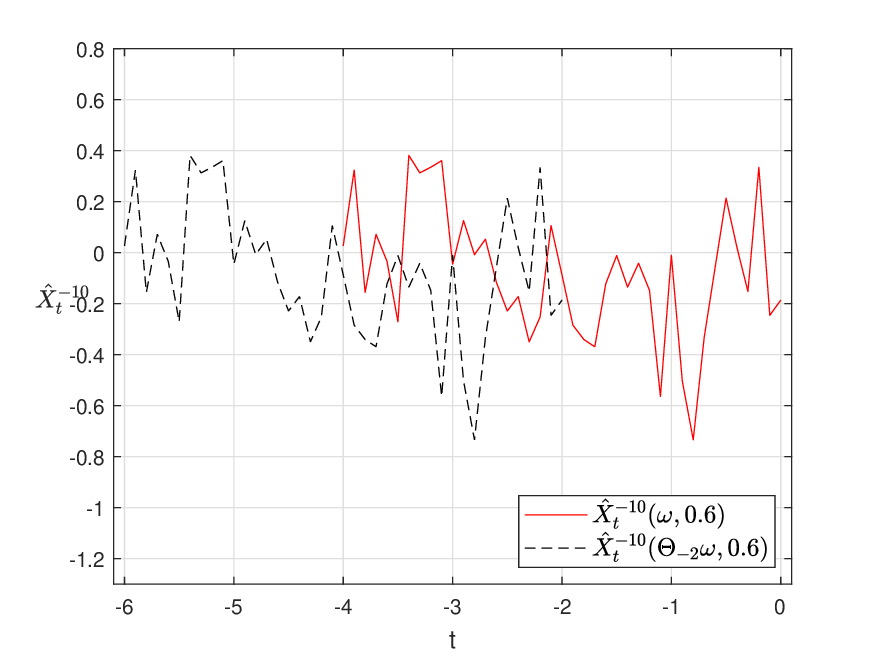}}
		\centering\caption{Validate periodicity based on $\hat{X}_{t}^{*}(\Theta_{-\tau}\omega)
			= \hat{X}_{t-\tau}^{*}(\omega)$}
		\label{fig:pI}
	\end{center}
\end{figure}

\begin{figure}[!htbp]
	\begin{center}
		\subfigure[$\theta = 0.75$]
		{\includegraphics[width=0.45\textwidth]{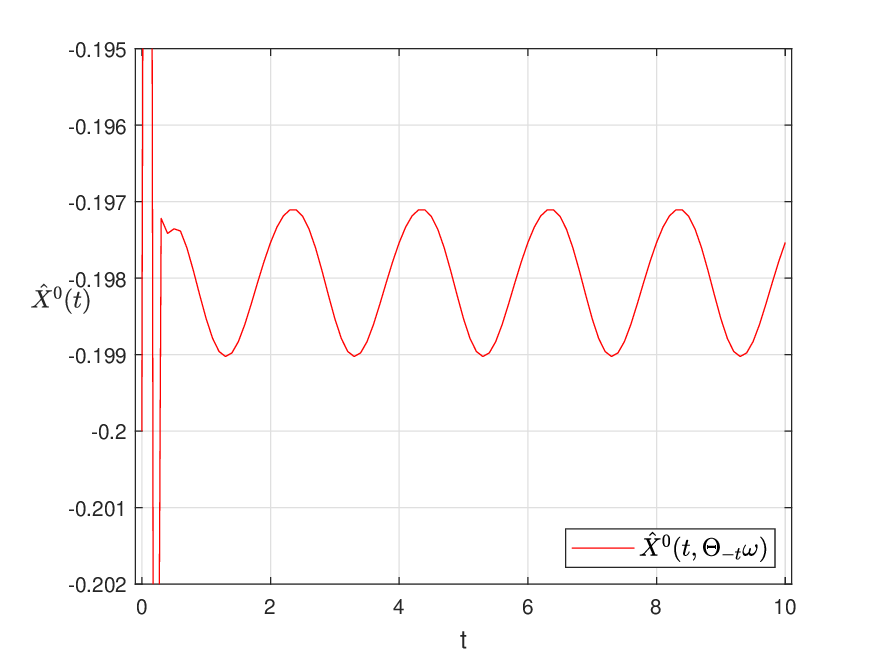}}
		\subfigure[$\theta = 1$]
		{\includegraphics[width=0.45\textwidth]{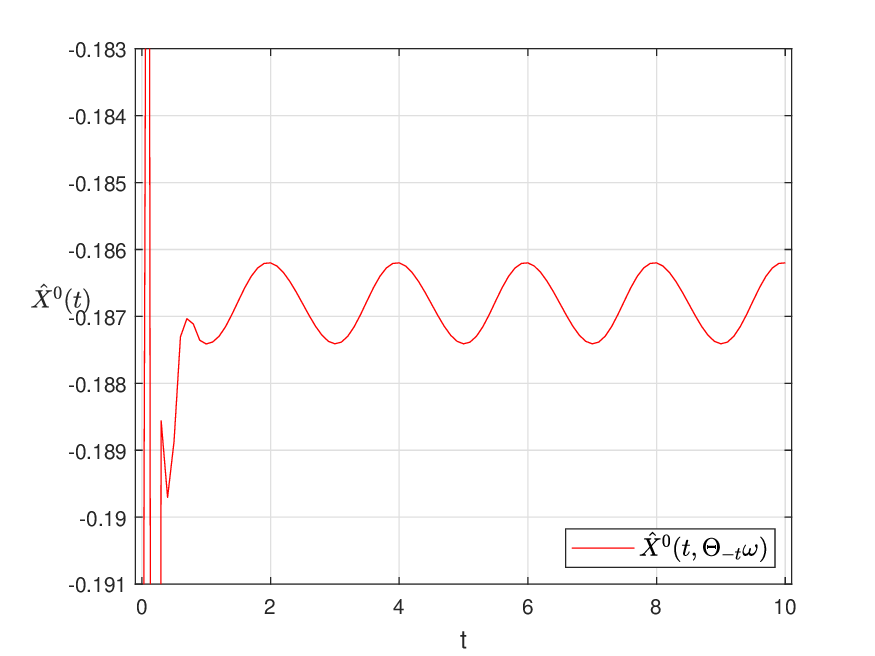}}
		\centering\caption{Validate periodicity via the periodicity of $\hat{X}^{0}(t,\Theta_{-t}\omega)$}
		\label{fig:pII}
	\end{center}
\end{figure}

Theorem \ref{thm:nump} indicates that for each $\theta \in (1/2,1]$, the corresponding ST method applied to \eqref{expl:one} admits a unique random periodic solution. Let us first numerically verify that the random periodic solution of ST method does not dependent on the initial values. To this end, we choose three different initial values $\xi = \pm0.6,0$ for $\theta = 0.75,1$, the corresponding paths $\hat{X}_{t}^{-10}(\omega,0.6)$,
$\hat{X}_{t}^{-10}(\omega,0)$ and $\hat{X}_{t}^{-10}(\omega,-0.6)$ are simulated on the time interval $[-10,0]$ with stepsize $\Delta = 0.1$; see
Figure \ref{fig:initial}. It can be seen that for each $\theta$, the three paths coincide after a very short time, which means that taking pull-back time $-10$ is sufficient to obtain a good convergence to the random periodic paths for $t \geq -8$.
Following the arguments in \cite{feng2017numerical}, we will verify the periodicity in two ways. Based on $\hat{X}_{t}^{*}(\Theta_{-\tau}\omega) = \hat{X}_{t-\tau}^{*}(\omega)$, we will plot the paths $\hat{X}_{t}^{-10}(\omega,0.6), t \in [-4,0]$ and $\hat{X}_{t}^{-10}(\Theta_{-2}\omega,0.6), t \in [-6,-2]$ for $\theta = 0.75,1$ with stepsize $\Delta = 0.1$ in Figure \ref{fig:pI}, which shows that the two segmented processes resemble each other with the period $\tau = 2$. At last, we check whether or not $\hat{X}_{t}^{*}(t,\Theta_{-\tau}\omega)$ is periodic with period $\tau = 2$. For this purpose, the path $X^{0}(t,\Theta_{-t}\omega), t \in [0,10]$ with $X^{0}(0,\Theta_{-0}\omega) = -0.2$ is given in Figure \ref {fig:pII} with stepsize $\Delta = 0.1$. Then we obtain a periodic pull-back path with period $\tau = 2$, which validates the random periodicity of the original path.

\begin{figure}[!htbp]
	\begin{center}
		\subfigure[$\theta = 0.75$]
		{\includegraphics[width=0.45\textwidth]{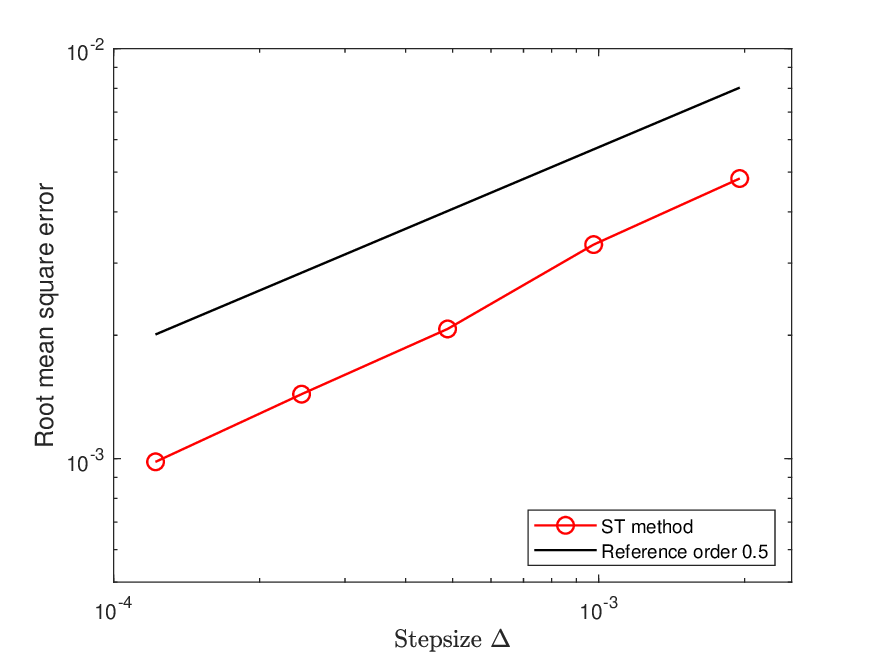}}
		\subfigure[$\theta = 1$]
		{\includegraphics[width=0.45\textwidth]{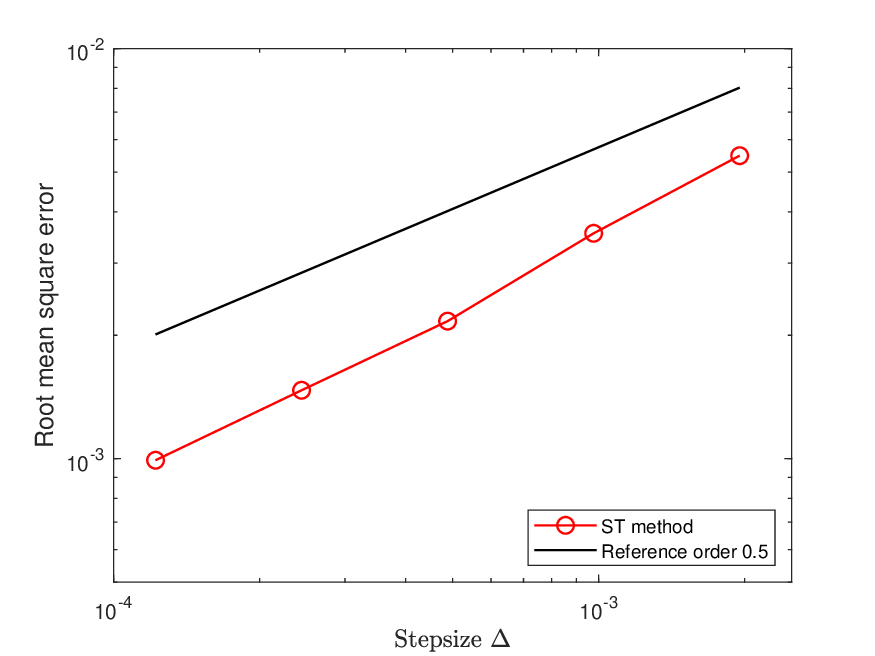}}
		\centering\caption{Mean square convergence orders for \eqref{expl:one}}
		\label{fig:pconverge}
	\end{center}
\end{figure}

Theorem \ref{cor:error} shows that for every $\theta \in (1/2,1]$, the random periodic solution converges to that of \eqref{expl:one} with order $1/2$ in the mean square sense. To support this result numerically, we identify the unavailable exact solution with a numerical approximation generated by the ST method with a fine stepsize $\Delta = 2^{-16}$ on the time interval $[-10,10]$. The other numerical approximations are calculated with five different equidistant stepsize $\Delta = 2^{-i}, i = 10,\cdots,14$. Here the expectations are approximated by the Monte Carlo simulation with 500 different Brownian paths. Figure \ref{fig:pconverge} shows that for different $\theta$, the root mean square error line and the reference line appear to parallel to each other, indicating that the mean square convergence rate of the ST method is $1/2$. A least square fit indicates that the slope of the line for the ST method is $0.5804$ for $\theta = 0.75$ and $0.6208$ for $\theta = 1$, identifying the theoretical result in Theorem \ref{cor:error}.

\begin{figure}[!htbp]
	\begin{center}
		\subfigure[$\theta = 0.75$]
		{\includegraphics[width=0.45\textwidth]{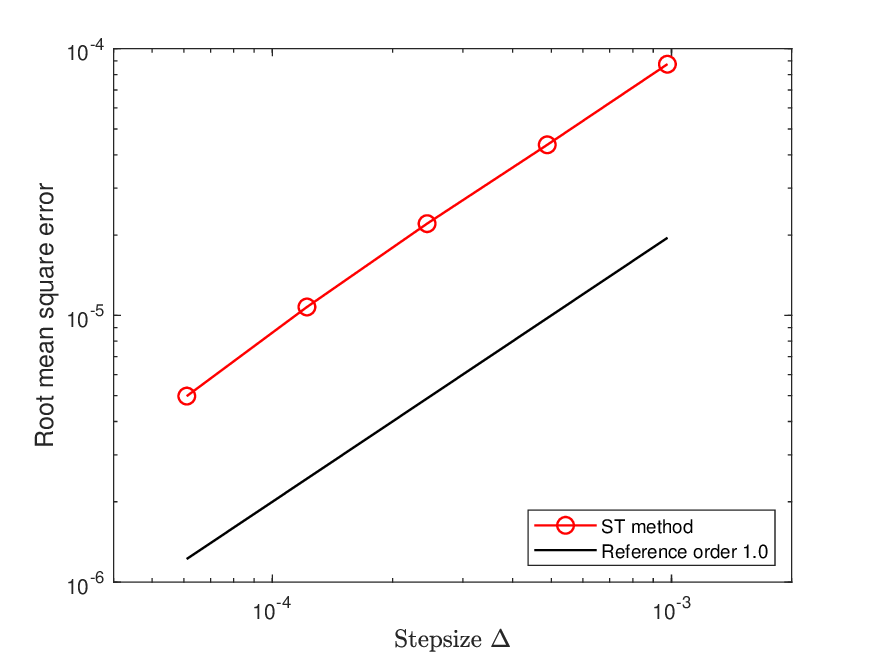}}
		\subfigure[$\theta = 1$]
		{\includegraphics[width=0.45\textwidth]{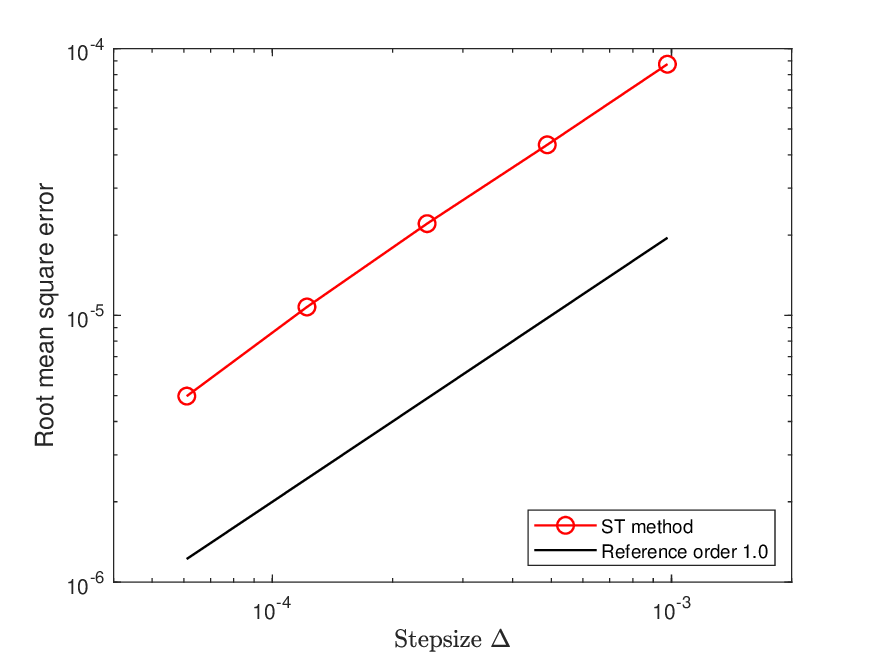}}
		\centering\caption{Mean square convergence orders for \eqref{eq:exadditive}}
		\label{fig:pconvergeadditive}
	\end{center}
\end{figure}

For the additive case, we consider the following one-dimensional SDE
\begin{equation}\label{eq:exadditive}
	\diff{X_{t}^{t_{0}}}
	=
	-10\pi X_{t}^{t_{0}}\diff{t}
	+
	\sin(2 \pi t)\diff{t} + 0.05\diff{W_{t}},
    \quad t \geq t_{0}.
\end{equation}
One can check that the associated period is $1$ and Assumptions \ref{asm:L} and \ref{asm:Ladditive} are fulfilled. It is worth noting that \eqref{eq:exadditive} has been considered in \cite{wu2023backward} to test the periodicity and mean square convergence of the backward Euler method. Thus for simplicity, we only verify the mean square order of ST methods on the time interval $[-10,10]$ with $\theta = 0.75,1$ under the above settings for convergence test of \eqref{expl:one}. Figure \ref{fig:pconvergeadditive} shows that the slopes of the error lines and the reference lines match well, indicating that the ST methods have strong rates of {order one} in additive case. Actually, least square fits produce rate $1.0289$ for $\theta = 0.75$ and rate $1.0289$ for $\theta = 1$. These facts coincide with the previous theoretical result in Theorem \ref{cor:error}.


%
%
%
%
%
%
%
%
%
%
%
%
%
%
%
%


\end{document}